\documentclass[a4paper,10pt]{article}

\usepackage{graphicx}
\usepackage{geometry}
\usepackage{amsthm}
\usepackage{amssymb}
\usepackage{amsmath}
\usepackage{hyperref}
\usepackage{xcolor}
\usepackage{enumerate}
\usepackage{listings}
\usepackage{complexity}
\usepackage{blkarray}
\usepackage{lmodern}
\usepackage[english]{babel}
\usepackage{accents}

\usepackage{tikz}
\usetikzlibrary{calc}
\usetikzlibrary{decorations.markings}
\usetikzlibrary{decorations.pathmorphing}
\usetikzlibrary{decorations.pathreplacing}

\usepackage{float}
\usepackage{caption}
\usepackage{subcaption}
\usepackage{graphicx}

\usepackage[font=small,labelfont=bf]{caption}
\usepackage[font=small,labelfont=normalfont,labelformat=simple]{subcaption}
\widowpenalty10000
\clubpenalty10000

\graphicspath{{figs/}}

% custom environments
%\newtheorem{theorem}{Theorem}
%\newtheorem{definition}{Definition}
%\newtheorem{lemma}[theorem]{Lemma}
%\newtheorem{corollary}[theorem]{Corollary}
%\newtheorem{question}{Question}
%\newtheorem{observation}{Observation}
%\newtheorem{proposition}{Proposition}
%\newtheorem{conjecture}{Conjecture}
%\newtheorem{problem}{Problem}
\newtheorem{theorem}{Theorem}
\newtheorem{definition}[theorem]{Definition}
\newtheorem{lemma}{Lemma}[section]

\newtheorem{observation}[theorem]{Observation}
\newtheorem{proposition}[theorem]{Proposition}
\newtheorem{conjecture}[theorem]{Conjecture}
\newtheorem{problem}[theorem]{Problem}

\newcommand{\revvec}[1]{\accentset{\leftarrow}{#1}}

\newcommand{\bivectwo}[1]{\overset{\text{\tiny$\longleftrightarrow$}}{#1}}
\newcommand{\bivec}[1]{\accentset{\leftrightarrow}{#1}}

\newcommand{\dgirth}{\vec{g}}

\author
{
Lior Gishboliner\thanks{School of Mathematical Sciences, Sackler Faculty of Exact Sciences, Tel-Aviv University, Israel, email: \texttt{liorgis1@mail.tau.ac.il}. Research supported by ERC Starting Grant 633509.}
\and
Raphael Steiner\thanks{Institute of Mathematics, Technische Universit\"at Berlin, Germany, email: \texttt{steiner@math.tu-berlin.de}.
Funded by DFG-GRK 2434 Facets of Complexity.}
\and
Tibor Szab\'{o}\thanks{Institute of Mathematics, Freie Universit\"{a}t Berlin, Germany, email: \texttt{szabo@math.fu-berlin.de}. 
Research supported in part by GIF grant No. G-1347-304.6/2016 and by the Deutsche Forschungsgemeinschaft (DFG, German Research Foundation) under Germany's Excellence Strategy - The Berlin Mathematics Research Center MATH+ (EXC-2046/1, project ID: 390685689).}
}

\date{\today}

\title{Oriented cycles in digraphs of large outdegree}

%------------------------------------------------------------------------------
\begin{document}
\maketitle

%\begin{abstract}
%In 1985, Mader conjectured the existence of a function $f:\mathbb{N} \rightarrow \mathbb{N}$ such that for every $k \in \mathbb{N}$, every digraph $D$ with minimum out-degree at least $f(k)$ contains a subdivision of $\vec{K}_k$, the transitive tournament of order $k$. This conjecture remains widely open, as even the existence of $f(5)$ is unknown. In view of the remarkable difficulty of Mader's question, Aboulker, Cohen, Havet, Lochet, Moura and Thomass\'{e} recently proposed the following two weaker conjectures:
%\begin{enumerate}
%\item For every $\ell \geq 2$ there is $K = K(\ell)$ such that every digraph $D$ with $\delta^+(D) \geq K$ contains a subdivision of every orientation of a cycle of length $\ell$.  
%\item For every oriented forest $F$ there exists a constant $K=K(F)$ such that every digraph $D$ with $\delta^+(D) \ge K$ contains a subdivision of $F$.
%\end{enumerate}
%In this paper we prove Conjecture 1 and give answers to further open problems raised by Aboulker et al. 
%\end{abstract}

\begin{abstract}
	In 1985, Mader conjectured that for every acyclic digraph $F$ there exists $K=K(F)$ such that every digraph $D$ with minimum out-degree at least $K$ contains a subdivision of $F$. This conjecture remains widely open, even for digraphs $F$ on five vertices. Recently, Aboulker, Cohen, Havet, Lochet, Moura and Thomass\'{e} studied special cases of Mader's problem and made the following conjecture:
	for every $\ell \geq 2$ there exists $K = K(\ell)$ such that every digraph $D$ with minimum out-degree at least $K$ contains a subdivision of every orientation of a cycle of length $\ell$. 

	We prove this conjecture and answer further open questions raised by Aboulker et al.
\end{abstract}

\section{Introduction}
A {\em subdivision} of a graph $F$ is a graph obtained from $F$ by replacing its edges with internally vertex-disjoint paths. 
This notion appears in some of the most fundamental results of graph theory, such as Kuratowski's characterization of planar graphs, as well as many classical results in the structure theory of sparse graphs. Because of these applications, it is desirable to understand by which means a given graph $G$ can be forced to contain a subdivision of a given graph $F$.
One such direction of study that has received a great amount of attention in the literature is the question of how ``dense'' $G$ should be to guarantee a subdivided $F$.
For undirected graphs, this problem has been solved with great precision. Mader~\cite{maderundirected} was the first to prove that for every fixed $k \in \mathbb{N}$, every graph of sufficiently large average degree contains a subdivision of $K_k$, and hence also of any other graph on at most $k$ vertices. The precise asymptotic dependence of the average degree on $k$, that is required to force $K_k$ as a subdivision, was independently determined by Bollob\'{a}s and Thomason~\cite{bollobas} and by Koml\'{o}s and Szemer\'{e}di~\cite{Komlos}.
\begin{theorem}[\cite{bollobas,Komlos}]\label{thm:undirected_clique}
	There is an absolute constant $C>0$ such that every graph with average degree at least $Ck^2$ contains a subdivision of $K_k$. This bound is best-possible up to the value of \nolinebreak $C$.
\end{theorem}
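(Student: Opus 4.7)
The plan is to reduce to a minimum-degree hypothesis and then extract a highly linked subgraph, from which a $K_k$-subdivision is built directly. First, a classical greedy argument — iteratively deleting any vertex of degree below half the current average degree preserves the average degree — produces a subgraph $G' \subseteq G$ with $\delta(G') \geq \tfrac{1}{2} d(G) = \Omega(k^2)$. Thus it suffices to prove the theorem assuming minimum degree $\Omega(k^2)$.

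The key tool is the notion of $k$-linkedness: a graph $H$ is \emph{$k$-linked} if for every collection of pairs $\{(s_i,t_i)\}_{i=1}^{k}$ of distinct vertices there exist internally disjoint $s_i$-$t_i$ paths. Given a $\binom{k}{2}$-linked subgraph $H$, we obtain a $K_k$-subdivision by selecting $k$ branch vertices $v_1,\ldots,v_k \in V(H)$, assigning to each $v_i$ a set of $k-1$ distinct private neighbors as stubs (possible because $\delta(H) \gg k^2$), and then applying the linkage to connect the $\binom{k}{2}$ pairs of stubs by internally disjoint paths. Thus the task reduces to finding a $\binom{k}{2}$-linked subgraph inside a graph of minimum degree $\Omega(k^2)$.

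The main step, and the source of the $k^2$ dependence, is a density-increment argument showing that minimum degree $\Omega(k^2)$ forces a subgraph of vertex connectivity $\Omega(k)$; a classical linkage theorem of Thomas--Wollan (or simpler variants sufficient in this regime) then upgrades connectivity $\Omega(k)$ to $\binom{k}{2}$-linkedness. This density increment is the main technical obstacle: in general, minimum degree $d$ only yields a subgraph of connectivity roughly $\sqrt{d}$, and this quadratic loss is exactly why the theorem requires $k^2$ edges per vertex rather than $k$. The Bollob\'{a}s--Thomason route carries out this increment via iterated pigeonhole extraction of a dense, highly-connected piece, while the Koml\'{o}s--Szemer\'{e}di route instead passes through an expander-regularization step and constructs short paths directly.

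For tightness, one exhibits graphs of average degree $\Omega(k^2)$ with no $K_k$-subdivision: a disjoint union of sufficiently small dense random or pseudorandom graphs suffices, since no component is large enough to accommodate the $\binom{k}{2}$ internally disjoint paths required by a $K_k$-subdivision, as can be verified by a direct edge count inside each dense piece.
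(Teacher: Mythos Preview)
The paper does not prove this theorem at all: it is stated as a background result and attributed to \cite{bollobas,Komlos}, so there is no in-paper argument to compare against. That said, your sketch contains a genuine quantitative error that breaks the argument as written.

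You claim that the quadratic loss occurs in passing from minimum degree to connectivity (``minimum degree $d$ only yields a subgraph of connectivity roughly $\sqrt d$''), and that connectivity $\Omega(k)$ then suffices for $\binom{k}{2}$-linkedness. Both statements are wrong, and they do not cancel. Mader's theorem gives a \emph{linear} trade-off: average degree at least $4\ell$ guarantees an $\ell$-connected subgraph, so minimum degree $\Omega(k^2)$ already yields a subgraph of connectivity $\Omega(k^2)$. Conversely, the linkage theorems you cite (Bollob\'as--Thomason, later Thomas--Wollan) say that $c\ell$-connectivity implies $\ell$-linkedness; to obtain $\binom{k}{2}$-linkedness you therefore need connectivity $\Omega(k^2)$, not $\Omega(k)$. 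The true source of the $k^2$ is simply that a $K_k$-subdivision requires $\binom{k}{2}$ pairwise internally disjoint paths; both the density-to-connectivity step and the connectivity-to-linkage step are linear. With these corrections your outline does recover the Bollob\'as--Thomason route, but as stated the chain ``degree $\Omega(k^2)$ $\Rightarrow$ connectivity $\Omega(k)$ $\Rightarrow$ $\binom{k}{2}$-linked'' is false at the second arrow.

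Your tightness sketch is acceptable in spirit; the cleanest explicit construction is a disjoint union of complete bipartite graphs $K_{t,t}$ with $t$ a small constant times $k^2$, where an internal-vertex count for same-side branch pairs shows no component can host a $K_k$-subdivision.
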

There is a natural analogue of subdivisions in directed graphs. Given a digraph $F$, a \emph{subdivision} of $F$ is a digraph obtained by replacing every arc $(x,y)$ in $F$ by a directed path from $x$ to $y$, such that subdivision-paths corresponding to different arcs are internally vertex-disjoint. It is natural to ask to what extent the above phenomenon, that every ''sufficiently dense`` graph contains a subdivision of a fixed graph $F$, extends to digraphs. 

Aboulker et al.~\cite{aboulker} introduced the following handy terminology for the study of forcing subdivisions of digraphs through various digraph parameters. Given a digraph parameter $\gamma$ ranging in $\mathbb{N}$, a digraph $F$ is called \emph{$\gamma$-maderian} if there exists a (smallest) number $\text{mader}_\gamma(F) \in \mathbb{N}$ such that every digraph $D$ with $\gamma(D) \ge \text{mader}_\gamma(F)$ contains a subdivision of $F$ as a subdigraph. We call $\text{mader}_\gamma(F)$ the {\em Mader number} of $F$ (with respect \nolinebreak to \nolinebreak $\gamma$). 

For example, using the natural analogue of these notions for undirected graphs, Theorem~\ref{thm:undirected_clique} states that the Mader number of $K_k$ with respect to the graph parameter $\bar{d}$, namely the average degree, is quadratic in $k$,
and in particular every graph $F$ is $\bar{d}$-maderian.

The \emph{average out-degree} (or, equivalently, {\em average in-degree}) of a digraph $D$ is $\overline{d}(D):=\frac{|A(D)|}{|V(D)|}$.
As the transitive tournament is a digraph of very high average out-degree which does not even contain a subdivision of a directed cycle, it should be clear that an analogue of 
Theorem~\ref{thm:undirected_clique} for digraphs cannot hold in its full generality. It turns out that the family of $\overline{d}$-maderian digraphs is limited to the so-called {\em anti-directed forest}s: forests in which every vertex is a sink or a source.
The positive direction of this result is the consequence of a theorem of Burr~\cite{burr2}, who proved that every digraph of sufficiently large average degree contains every anti-directed forest as a subgraph (and hence also as a subdivision). 
The negative direction, as pointed out by Aboulker et al.~\cite{aboulker}, follows by considering dense bipartite graphs of large girth and orienting all their edges from one side of the bipartition to the other.

The above constructions of dense digraphs without certain subdivisions all contain sinks (i.e. vertices of out-degree zero); this motivates the study of subdivisions in digraphs with large minimum out-degree. 
The \emph{minimum out-degree} ({\em minimum in-degree}) of a digraph $D$ is denoted by $\delta^+(D)$ (respectively, $\delta^-(D)$).

Since $\delta^+\leq \overline{d}$, every $\overline{d}$-maderian digraph is obviously also $\delta^+$-maderian. However, a characterization of $\delta^+$-maderian digraphs is still widely unknown.
Thomassen~\cite{thom85}, answering a question of Seymour in the negative, constructed digraphs of arbitrarily large minimum out-degree not containing directed cycles of even length. As a consequence, if a digraph $F$ has the property that each of its subdivisions contains a directed cycle of even length, then $F$ is not $\delta^+$-maderian. Digraphs with this property are known in the literature as \emph{even digraphs}, and have been extensively studied due to their relation to the so-called \emph{even cycle problem}. We refer the reader to 
%\cite{pfaffian},~\cite{seymthom},~\cite{thom85},~\cite{thom86},~\cite{thom92}
\cite{pfaffian,seymthom,thom85,thom86,thom92} 
for a selection of relevant literature. As can easily be verified by hand, the smallest even digraph is the bioriented clique $\bivec{K}_3$ of order $3$. This is also the smallest non-$\delta^+$-maderian digraph; indeed, 
%in Section~\ref{sec:k3-e} 
the following theorem states that $\bivec{K}_3-e$, the digraph obtained from $\bivec{K}_3$ by removing a single arc, is $\delta^+$-maderian. The proof of this theorem appears in Section~\ref{sec:k3-e}. 
\begin{theorem}\label{thm:K3-e}
	Every digraph $D$ with $\delta^+(D) \ge 2$ contains a subdivision of $\bivec{K}_3-e$.
\end{theorem}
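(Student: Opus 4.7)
The plan is a minimum-counterexample plus shortest-cycle argument. Suppose $D$ is a digraph with $\delta^+(D)\ge 2$ not containing a subdivision of $\bivec{K}_3-e$, chosen with $|V(D)|$ minimum and, subject to that, $|A(D)|$ minimum. Standard reductions show that $D$ must be strongly connected: otherwise, a terminal strongly connected component inherits $\delta^+\ge 2$ and has strictly fewer vertices, contradicting minimality. Edge-minimality then forces $d^+(v)=2$ for every $v\in V(D)$, since removing an excess out-arc from a vertex with $d^+ \ge 3$ preserves $\delta^+\ge 2$ and yields either a smaller counterexample (contradicting edge-minimality) or a digraph already containing the desired subdivision. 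Since $\delta^+\ge 2$ forces $|V(D)|\ge 3$, with equality only for $D=\bivec{K}_3$ which trivially contains $\bivec{K}_3-e$, we may also assume $|V(D)|\ge 4$.

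Next, I take a shortest directed cycle $C\colon v_0\to v_1\to \cdots \to v_{k-1}\to v_0$ in $D$. By the chord-freeness of shortest directed cycles, each $v_i$ has exactly one out-neighbor on $C$ (namely $v_{i+1}$), so its second out-arc lands at some $u_i\notin V(C)$. By strong connectivity, there is a shortest directed path $P_i$ from $u_i$ back to $V(C)$ whose interior avoids $V(C)$, ending at some $v_{f(i)}$. Together these form an ``ear'' $E_i$ from $v_i$ to $v_{f(i)}$ with interior disjoint from $V(C)$.

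The target subdivision of $\bivec{K}_3-e$ requires a central branch vertex $w$ attached by ``digons'' to two other branch vertices $u, v$, together with a $v\to u$ path, for a total of five internally disjoint directed paths. The strategy is to assemble these from $C$ and two well-chosen ears $E_i, E_j$: the vertex $w$ will be a vertex of $C$ (or the tip of a returning ear); portions of $C$ supply one of the digon-paths; and pieces of $E_i$ and $E_j$ supply the second digon and the connecting $v\to u$ path. When $k\ge 3$ the cycle has enough ``room'' to split off the required internally disjoint sub-paths: after a routine analysis of the landing indices $f(i), f(j)$ one finds an admissible choice, the shortest-cycle and shortest-ear properties guaranteeing the needed disjointness.

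The main obstacle is the case $k=2$, where $C$ is just a single digon $v_0\leftrightarrow v_1$ and there is no slack on $C$ itself. Here I analyze the two ears $E_0, E_1$ in detail, distinguishing whether $u_0=u_1$, whether the interiors of $E_0$ and $E_1$ intersect, and whether $f(0), f(1)$ coincide or differ. In each subcase, one either exhibits $\bivec{K}_3-e$ directly (using some $v_i$ or an interior ear-vertex as the central $w$, and using the ears together with the digon to supply the five disjoint paths), or else one finds either a cycle shorter than $C$ or an arc/vertex whose deletion preserves $\delta^+\ge 2$, contradicting the minimality of $D$.
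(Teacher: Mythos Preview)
Your outline diverges substantially from the paper's argument, and as written it is a plan rather than a proof: the two places where the real work happens are both deferred to phrases like ``routine analysis'' and ``in each subcase one either exhibits \ldots\ or finds \ldots'', and neither is routine.

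The concrete gap is the disjointness of the ears. Shortest-path ears $E_i$ and $E_j$ from distinct $v_i,v_j\in C$ can certainly share interior vertices; nothing about $C$ being shortest or $P_i,P_j$ being shortest prevents $u_i$ and $u_j$ from funnelling into the same vertex off $C$. Once the interiors of $E_i$ and $E_j$ meet, you no longer have five internally disjoint dipaths for free, and the attempt to name branch vertices and read off the paths breaks down. (Try it in the $k\ge 3$ case with a single ear $E_0$ landing at $v_j$ together with the extra arc $(v_1,u_1)$ for $u_1\in\mathrm{int}(E_0)$: no choice of three branch vertices among $\{v_0,v_1,v_j,u_1\}$ yields five internally disjoint dipaths using only $C$, $E_0$ and that arc---some pair always overlaps on a segment of $C$.) Your fallback in the $k=2$ case, ``or else one finds an arc/vertex whose deletion preserves $\delta^+\ge 2$'', is also unsupported: since every vertex already has out-degree exactly $2$, no arc can be deleted, and deleting any vertex may drop several in-neighbours to out-degree $1$.

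For comparison, the paper sidesteps all of this by strengthening the statement: it allows one distinguished vertex $v_0$ with $d^+(v_0)\ge 1$, which makes an honest induction possible. In the minimal counterexample, one looks at the unique out-neighbour $v_1$ of $v_0$ and splits on whether $v_0$ and $v_1$ have a common in-neighbour. If not, one \emph{contracts} the arc $(v_0,v_1)$ (redirecting $N^-(v_1)$ into $v_0$) and pulls a subdivision back from the smaller digraph. If yes, a single application of Menger's theorem to two $v_1$--$\{v_0,z_0\}$ dipaths (where $z_0$ is the common in-neighbour) either produces the subdivision directly or yields a one-vertex cut, which is then eliminated by a short separate argument. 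No ear bookkeeping is needed.
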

Observe that for every digraph $F$ it holds that $\text{mader}_{\delta^+}(F) \geq |V(F)| - 1$, since the bioriented clique on $|V(F)| - 1$ vertices has minimum out-degree $|V(F)|-2$ but no subdivision of $F$. Hence, the bound in Theorem \ref{thm:K3-e} is optimal. 

Theorem \ref{thm:K3-e} strengthens an earlier result by Thomassen (cf.~\cite{thom2cycles}, Theorem 6.2), who proved that every digraph of minimum out-degree $2$ contains two directed cycles sharing precisely one vertex (this configuration is present in every subdivision of $\bivec{K}_3-e$). On the negative side, another construction by Thomassen~\cite{thom85} shows that there are digraphs of arbitrarily high minimum out-degree having no three directed cycles which share exactly one common vertex (and are otherwise disjoint). In other words, the bioriented $3$-star $\bivec{S}_3$ is not $\delta^+$-maderian. This result is somewhat surprising when compared to another positive result of Thomassen~\cite{thom83}, which shows that for every $k \in \mathbb{N}$ the digraph $k\bivec{K}_2$ (i.e., the disjoint union of $k$ digons) is $\delta^+$-maderian. More concretely, Thomassen proved that for every $k \in \mathbb{N}$ we have $\text{mader}_{\delta^+}(k\bivec{K}_2) \le (k+1)!$. The first linear bound on $\text{mader}_{\delta^+}(k\bivec{K}_2)$ was proven by Alon~\cite{alon}, and then further improved by Buci\'{c}~\cite{bucic}. The famous Bermond-Thomassen conjecture states that in fact $\text{mader}_{\delta^+}(k\bivec{K}_2)=2k-1$, but this remains widely open. 

A further negative result was established by DeVos et al.~\cite{DMMS}. Building on previous work of Mader~\cite{MaderDegLocalConn}, they constructed digraphs of arbitrarily high minimum out-degree having no pair of vertices $x,y$ with two arc-disjoint dipaths from $x$ to $y$ as well as two from $y$ to $x$ (see~\cite[Observation 8]{DMMS}). This result shows that every $\delta^+$-maderian digraph $F$ has arc-connectivity $\kappa'(F) \le 1$. On the positive side, Aboulker et al.~\cite{aboulker} proved that if $F$ is a digraph consisting of two vertices $x$ and $y$ and three internally vertex-disjoint dipaths between $x$ and $y$ -- two from $x$ to $y$ and one from $y$ to $x$ -- then $F$ is $\delta^+$-maderian.

The negative results discussed so far show that digraphs $F$ with a sufficiently rich directed cycle structure are not $\delta^+$-maderian. However, to this date, no  \emph{acyclic} digraph is known that is not $\delta^+$-maderian. This lead Mader~\cite{MaderDegLocalConn} to the following conjecture.
\begin{conjecture}[Mader, 1985]\label{conj:Mader}
	Every acyclic digraph is $\delta^+$-maderian.
\end{conjecture}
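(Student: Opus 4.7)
The plan is to proceed by induction on $|V(F)|$, using the fact that every acyclic digraph has a sink. The base case $|V(F)|=1$ is trivial. For the inductive step, pick a sink $v \in V(F)$ with in-neighbors $u_1, \dots, u_k$, and set $F' := F - v$. Since $F'$ is acyclic on $|V(F)|-1$ vertices, the induction hypothesis gives a constant $K' = \text{mader}_{\delta^+}(F')$. The task reduces to showing that some $K = K(F)$ suffices so that every $D$ with $\delta^+(D) \ge K$ contains a subdivision of $F'$ which can be extended to a subdivision of $F$ by attaching a new vertex $v'$ to the branch vertices $\tilde u_1, \dots, \tilde u_k$ via $k$ internally vertex-disjoint dipaths $P_1, \dots, P_k$.

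First I would pass to a ``nicer'' host $D^\ast \subseteq D$ with $\delta^+(D^\ast)$ still large but with extra structure -- for instance a strongly connected subdigraph, which exists because a subdigraph of $D$ which is minimal subject to $\delta^+\ge K'$ is automatically strongly connected (a classical observation). Inside $D^\ast$, I would search for a subdivision of $F'$ whose branch vertices $\tilde u_i$ each have ``large reach'': many vertices reachable by dipaths that avoid the rest of the subdivision. Then a pigeonhole / Menger-type argument should allow one to merge the $\tilde u_i$'s at a common endpoint $v'$ through internally disjoint dipaths, yielding a subdivision of $F$.

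To make this merging step work one would likely need to strengthen the inductive hypothesis: instead of asserting the mere existence of one subdivision of $F'$, one would aim to produce many subdivisions of $F'$ in $D^\ast$, or subdivisions whose branch vertices enjoy significant freedom (e.g.\ cover large portions of prescribed out-neighborhoods). This is in the spirit of the ``dense subdivision'' strengthenings used in the undirected setting, and matches the flavor of the techniques used in the present paper to handle oriented cycles.

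The main obstacle is exactly this linking step. Directed linkage is notoriously harder than its undirected analogue: even the $2$-linkage problem is NP-hard in digraphs, and the constructions of Thomassen~\cite{thom85} and DeVos et al.~\cite{DMMS} show that large minimum out-degree alone enforces no nontrivial linkage properties in general. This is the reason Mader's conjecture has remained wide open since 1985 -- even for $F$ on five vertices. Any successful attack would need a genuinely new idea here, presumably exploiting the acyclicity of $F$ to circumvent the cycle-based obstructions that rule out linkage in the general case; a reasonable first step would be to verify the conjecture for structured sub-families such as in-arborescences and oriented paths before tackling the general case.
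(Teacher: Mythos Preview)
The statement you are addressing is Conjecture~\ref{conj:Mader} (Mader's conjecture), which the paper presents explicitly as an \emph{open problem}, not as a result it proves. The paper states that even the case $F=\vec{K}_5$ is unknown, and its contributions (Theorems~\ref{thm:cycles}, \ref{thm:K3-e}, \ref{thm:betterbound}) concern only special families such as orientations of cycles. There is therefore no proof in the paper to compare your proposal against.

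Your write-up is not a proof either, and to your credit you say so: you outline an induction on $|V(F)|$ via sink deletion, reduce the inductive step to a linking problem (route $k$ internally disjoint dipaths from prescribed branch vertices $\tilde u_1,\dots,\tilde u_k$ to a common new vertex), and then correctly identify this linking step as the genuine obstruction. That diagnosis is accurate. The constructions of Thomassen and of DeVos et al.\ cited in the introduction show that large $\delta^+$ does not buy any useful linkage, so the phrase ``a pigeonhole / Menger-type argument should allow one to merge the $\tilde u_i$'s'' is precisely the step where every known attempt breaks down; nothing in your sketch indicates how acyclicity of $F$ would be leveraged to get around this. In short, what you have written is a reasonable summary of why the conjecture is hard, but it is not a proof proposal in any operative sense, and the paper makes no claim to prove it.
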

Clearly, it would suffice to prove Mader's conjecture for the transitive tournaments $\vec{K}_k$. Mader~\cite{Mader_trans_4} proved that $\text{mader}_{\delta^+}(\vec{K}_4)=3$, but the existence of $\text{mader}_{\delta^+}(\vec{K}_k)$ remains unknown for any $k \ge 5$. In view of the apparent difficulty of Mader's question, it is natural to try and verify Mader's conjecture for subclasses of acyclic digraphs. Mader himself~\cite{MaderLocalConnectivity} considered the digraph consisting of two vertices $x$ and $y$ and $k$ dipaths of length two from $x$ to $y$, and showed that it is $\delta^+$-maderian for all $k \in \mathbb{N}$. Aboulker et al.~\cite{aboulker} proposed to study the following two special cases of Mader's conjecture:
\begin{conjecture}[\cite{aboulker}]\label{con:forests}
	Every orientation of a forest is $\delta^+$-maderian.
\end{conjecture}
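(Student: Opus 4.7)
My plan is induction on the number of arcs of $F$. Since an oriented forest is a vertex-disjoint union of oriented trees, and any subdivision of it is likewise a vertex-disjoint union of subdivisions of the components, I would first reduce to the case of trees: given a tree component $T_1$, find a subdivision of $T_1$ in $D$, pass to a subdigraph of large minimum out-degree inside $D$ minus the vertices of this subdivision (such a subdigraph exists by a standard peeling argument, since deleting a bounded number of vertices preserves large enough average out-degree, and extracting a $\delta^+$-rich subgraph is routine), and iterate on the remaining components. Provided $\delta^+(D)$ is taken large enough as a function of $|V(F)|$, this greedy loop succeeds.

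For a tree $F$, I would pick a leaf $v$ with neighbor $u$, set $F' := F - v$, and invoke induction to find a subdivision of $F'$ in $D$. Two cases arise. If the arc at $v$ is oriented from $u$ to $v$ (a sink-leaf), the argument is easy: take $K$ comfortably larger than $\text{mader}_{\delta^+}(F')$, and by a short bookkeeping argument find a subdivision of $F'$ that leaves an out-neighbor of the image of $u$ unused; extend through that out-neighbor. If the arc is oriented from $v$ to $u$ (a source-leaf), then one needs a directed path in $D$ ending at the image of $u$, internally disjoint from the current subdivision, and large minimum out-degree alone gives no direct control over in-reach.

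The hard part will be the source-leaf case. The strategy I would try is a \emph{hub-selection} strengthening of the inductive hypothesis: instead of the bare Maderian statement, aim to prove the stronger statement that for any oriented tree $F'$ with a distinguished vertex $u^* \in V(F')$ and every $r \in \mathbb{N}$, every digraph $D$ with $\delta^+(D) \ge K'(F', r)$ contains a subdivision of $F'$ in which $u^*$ is mapped to a vertex $w$ whose in-reach (the set of vertices with a directed path to $w$ in $D$) has size at least $r$. Such hubs exist in any digraph of large $\delta^+$: by the identity $\sum_w d^-(w) = \sum_w d^+(w) \ge \delta^+(D) \cdot |V(D)|$, some vertex has in-degree $\ge \delta^+(D)$, and iterating along in-arcs grows its in-reach further. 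With this strengthening, the source-leaf extension is straightforward: pick an unused vertex in the in-reach of $w$ and route a directed path from it to $w$, internally disjoint from the small current subdivision. The real difficulty is maintaining this hypothesis through every level of the induction for every possible choice of $u^*$ and every orientation of $F'$, since the distinguished vertex and the orientation interact with the chosen leaf removal; this is likely the technical barrier that has kept the full forest conjecture open while the cycle case has now been resolved.
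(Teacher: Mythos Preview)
The statement is Conjecture~\ref{con:forests}, which the paper does \emph{not} prove; it remains open. So there is no paper proof to compare against, and your proposal should be read as a sketch of an attack, not as a proof.

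The central gap in your plan is the repeated assumption that a subdivision of $F'$ found in $D$ has size bounded in terms of $|V(F')|$ alone. This is false: subdivision paths may be arbitrarily long, so a subdivision of even a single arc can occupy an unbounded number of vertices of $D$. This breaks your forest-to-trees reduction (``deleting a bounded number of vertices preserves large enough average out-degree'' --- but you are not deleting a bounded number of vertices), and it breaks your ``easy'' sink-leaf case: after embedding $F'$, the image $w$ of $u$ may well have \emph{all} of its out-neighbours lying on the subdivision paths already used, regardless of how large $\delta^+(D)$ is. No ``short bookkeeping argument'' rescues this without an a priori bound on the size of the subdivision, which you do not have.

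In fact the paper explicitly singles out precisely your ``easy'' case as open: Conjecture~\ref{conj:out_leaf} asks whether attaching a sink-leaf $(v_0,v_1)$ to a $\delta^+$-maderian digraph preserves the property, and the paper proposes Conjecture~\ref{conj:in-dominating} (on deletable in-dominating sets) as a route to it. So you have inverted the difficulty: the sink-leaf extension is not known to be routine, and your hub-selection idea for the source-leaf case, while reasonable in spirit, inherits the same unbounded-subdivision obstacle when you try to propagate the strengthened hypothesis through the induction.
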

\begin{conjecture}[\cite{aboulker}]\label{con:cycles}
	Every orientation of a cycle is $\delta^+$-maderian.
\end{conjecture}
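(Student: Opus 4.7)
My plan is to induct on the number of \emph{blocks} of $\vec{C}$, where a block is a maximal run of consecutive arcs of the same direction around the cycle. Since directions alternate between blocks, their number $2b$ is even, corresponding to $b$ local sources $x_1,\dots,x_b$ and $b$ local sinks $y_1,\dots,y_b$ on $\vec C$. The base case $b=0$ is the directed cycle $\vec{C}_\ell$, which is handled by a standard longest-path argument: any digraph with $\delta^+ \ge \ell$ contains a dicycle of length $\ge \ell+1$, since the endpoint of a longest dipath $v_0 \cdots v_t$ has all of its $\ell$ out-neighbours on the path, and such a dicycle is itself a subdivision of $\vec{C}_\ell$.

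For the inductive step I would first pass to a terminal strong component of $D$, which inherits $\delta^+ \ge K$, so I may assume $D$ is strongly connected. Decompose $\vec C$ cyclically into $b$ forward dipaths $F_1,\dots,F_b$ and $b$ backward dipaths $B_1,\dots,B_b$ of prescribed lengths; the problem then reduces to finding in $D$ internally disjoint dipaths realizing the $F_i$ and $B_i$ with the correct incidence pattern at the $x_i$ and $y_i$. My plan is to build this structure around a long ``backbone'' dipath $Q = q_0 q_1 \cdots q_m$ absorbing the forward blocks as disjoint consecutive subpaths, together with $b$ internally disjoint \emph{back-ears} $E_1,\dots,E_b$, where each $E_j$ is a dipath from some $q_{i_j'}$ back to some $q_{i_j}$ with $i_j<i_j'$, playing the role of a backward block $B_j$ after appropriate choice of endpoint positions and lengths.

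The technical heart of the argument is therefore a structural lemma asserting that, in any strongly connected digraph of $\delta^+$ sufficiently large as a function of $\ell$, such a backbone with many disjoint back-ears of controllable endpoint positions and lengths exists. I would prove this by iteratively applying the inductive hypothesis for cycles with fewer blocks (in particular the base case for dicycles) combined with a greedy ear-attachment scheme: find a first long back-ear using the inductive hypothesis, then repeatedly attach new ears, at each step deleting a small ``used'' set of vertices and re-invoking the hypothesis on the remaining digraph, whose $\delta^+$ is only mildly reduced. A clean alternative, if available, would be to package everything into a single ``universal'' digraph -- a sufficiently long dipath equipped with sufficiently many disjoint back-paths between its vertices -- and prove once and for all that any such universal digraph arises as a subdivision whenever $\delta^+$ is large enough; from it, every cycle orientation of length $\ell$ can then be extracted combinatorially.

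The main obstacle is the backward portion of the orientation: large minimum out-degree gives excellent forward reachability but no direct control on in-degrees or on dipaths ending at a prescribed vertex. Overcoming this will require exploiting simultaneously that $\delta^+(D) \ge K$ forces average in-degree $\ge K$ (so a positive density of vertices has in-degree $\ge K/2$, giving plenty of candidates for endpoints of back-ears) and that strong connectivity produces a dipath to any specified target. Turning these two facts into a quantitative existence statement for many disjoint back-ears of controllable length is the crux, and will be what dictates the final (presumably iterated-exponential) bound on $K(\ell)$.
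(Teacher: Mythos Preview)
Your plan correctly identifies the combinatorial target --- a long backbone dipath with many disjoint back-ears of controllable positions and lengths would indeed encode every oriented cycle of length $\ell$ --- but it has a genuine gap in how to obtain this structure. The inductive hypothesis (for cycles with fewer blocks) gives you a subdivision of such a cycle \emph{somewhere} in any digraph of large $\delta^+$, with no control over where it lands. So ``find a first long back-ear using the inductive hypothesis, then repeatedly attach new ears, at each step deleting a small used set and re-invoking the hypothesis'' does not work as stated: applying the hypothesis to $D$ minus a small deleted set produces a new cycle subdivision that need not touch your pre-chosen backbone $Q$ at all, let alone at prescribed positions. Your fallback observations (positive density of high-in-degree vertices, strong connectivity giving a dipath to any target) are correct but far too soft to yield many internally disjoint back-ears of prescribed minimum length landing at prescribed vertices of $Q$ while avoiding the rest of $Q$. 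You have correctly located the crux, but the proposal contains no idea that addresses it.

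The paper's proof is substantially different and does not proceed by induction on the number of blocks. It first reduces to a single universal target $C_{a,b}$ ($a$ sources, $a$ sinks, all blocks of length $b$), then --- crucially --- invokes a result of Dellamonica, Koubek, Martin and R\"odl to pass to a subdigraph that retains large $\delta^+$ but also has large directed girth; this guarantees that short directed walks cannot self-intersect, a fact used repeatedly. The main construction is a \emph{chain}: a spine dipath decorated with local ``gadgets'' of three carefully designed types, each of bounded size, each providing the ability to realise a local turn-around (an alternating subpath). One then takes a maximal good chain and shows, via two embedding lemmas for gadgets, that either it can be extended (contradicting maximality) or a self-intersection with an earlier gadget forces a $C_{a,b}$-subdivision. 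The gadget machinery together with the high-girth reduction supplies precisely the positional control that your greedy ear-attachment scheme lacks, and the resulting bound on $\text{mader}_{\delta^+}$ is polynomial in $\ell$, not iterated-exponential. Your ``universal digraph'' alternative is in spirit closer to what the paper does, but proving that such a universal structure exists is itself the whole difficulty, and the paper's chain-of-gadgets argument is the mechanism that accomplishes it.
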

Aboulker et al. \cite{aboulker} proved two special cases of Conjecture~\ref{con:forests}, showing that every orientation of a path and every in-arborescence (an oriented tree with all edges directed towards a specified root) is $\delta^+$-maderian. They also proved Conjecture~\ref{con:cycles} for oriented cycles consisting of two {\em blocks}\footnote{By a {\em block} in an oriented cycle we mean a maximal directed subpath.}, i.e., oriented cycles having exactly one source and one sink. 

Our main contribution in this paper is to verify Conjecture~\ref{con:cycles} in its full generality. Moreover, we show that the Mader number $\text{mader}_{\delta^+}$ of an oriented cycle grows (only) polynomially with the cycle length. Let $C_{\ell}$ denote the undirected cycle of length $\ell$.  
\begin{theorem}\label{thm:cycles}
	There exists a polynomial function $K:\mathbb{N} \rightarrow \mathbb{N}$ such that for every $\ell \ge 2$, every digraph $D$ with $\delta^+(D) \ge K(\ell)$ contains a subdivision of every orientation of $C_\ell$.
\end{theorem}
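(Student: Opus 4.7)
The plan is to split the proof according to the block structure of the oriented cycle. For the directed cycle $\vec{C}_\ell$ (single-block case), the classical longest-path argument shows that every digraph with $\delta^+(D) \geq \ell$ contains a directed cycle of length at least $\ell+1$, which is a subdivision of $\vec{C}_\ell$. So $K(\ell) := \ell$ handles this case.

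For oriented cycles $C$ with $2b \geq 2$ blocks $B_1,\dots,B_{2b}$, I would attempt a \emph{remove-and-close} strategy. Fix one block, say $B_1$, and let $P := C - B_1$, an oriented path whose endpoints are a source $s_1$ and a sink $t_1$ of $C$. The first step is to pass to a strongly connected subdigraph $D_0 \subseteq D$ with $\delta^+(D_0)$ still polynomial in $\ell$, via the standard reduction of iteratively deleting vertices of low out-degree and restricting to a terminal strong component. Then, using the result of Aboulker et al.\ that every orientation of a path is $\delta^+$-maderian (together with a polynomial bound on the corresponding Mader number, which I believe can be extracted from their argument or, if not, established as a subsidiary lemma), find a subdivision $\widehat{P}$ of $P$ inside $D_0$. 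The final step is to close $\widehat{P}$ into a subdivision of $C$ by finding a directed path between the endpoints $\widehat{s}_1,\widehat{t}_1$ of $\widehat{P}$, matching the orientation of $B_1$, of length at least the length of $B_1$, and internally disjoint from $\widehat{P}$.

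The main obstacle I foresee is this closing step. The endpoint $\widehat{s}_1$ has at least $\delta^+(D_0) - 1$ out-arcs outside $\widehat{P}$, so launching a directed walk from $\widehat{s}_1$ is easy; the difficult side is $\widehat{t}_1$, since we have no control on its in-degree in the complement of $\widehat{P}$. To handle this, rather than committing to a single subdivision of $P$, I would aim to produce a large family of such subdivisions by varying the choice of branch vertices (leveraging the abundance of out-arcs at each step of constructing the path), and apply a pigeonhole or averaging argument to find one whose sink-endpoint admits a short directed path from $\widehat{s}_1$ through $D_0 \setminus \mathrm{Int}(\widehat{P})$. A supporting sublemma should assert that in a strongly connected digraph with $\delta^+ \geq K$, for any $\mathrm{poly}(\ell)$-sized vertex set $X$ and ``most'' ordered pairs $(u,v)$, there is a directed $u$-$v$ path avoiding $X$.

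Finally, the bound $K(\ell)$ must remain polynomial in $\ell$, so each step may lose only a polynomial factor. I would therefore try to avoid induction on $b$ (which would likely incur multiplicative blow-up) and instead embed all $2b$ blocks simultaneously through one application of the oriented-path subdivision lemma and a single closing step. The quantitative tuning of the strongly-connected reduction, the path-subdivision lemma, and the closing lemma is where I expect most of the technical work to lie.
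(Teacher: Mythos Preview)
Your closing step is a genuine gap, and the supporting sublemma you sketch is false. Strong connectivity together with large $\delta^+$ does not preclude small vertex cuts: glue two highly out-regular strongly connected digraphs through a single vertex $v$ to obtain a strongly connected digraph of arbitrarily large $\delta^+$ in which every dipath between the two sides passes through $v$. So even a one-vertex set $X$ can block all $u$--$w$-dipaths for many pairs $(u,w)$, and the ``most pairs'' quantifier does not help because $\widehat{s}_1,\widehat{t}_1$ are dictated by the embedding of $P$, not chosen freely. The fallback of varying the branch vertices of $\widehat{P}$ does not evade this either: nothing prevents every candidate embedding from placing its interior across the same cut, or from forcing $\widehat{t}_1$ into a region unreachable from $\widehat{s}_1$ once the interior is deleted. (A secondary issue: the Aboulker et al.\ argument for oriented paths does not obviously yield a polynomial Mader number, so that step would also need independent work.)

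The paper proceeds quite differently. It first passes, via a result of Dellamonica--Koubek--Martin--R\"odl, to a subdigraph of large directed girth, and then grows a \emph{chain}: a spine dipath decorated with small local \emph{gadgets} (short cycles, fan-like structures, and pairs of dipaths meeting at a point), each of which supplies one ``turn'' of an $(a,b)$-alternating path. A maximal good chain must self-intersect in a controlled way---either the growing end hits an early gadget directly, or a new extended gadget attached at the end meets an early one---and a dedicated lemma (Lemma~\ref{lem:chain_main}) extracts a $C_{a,b}$-subdivision from such an intersection. The closure is thus achieved by a \emph{local} collision forced by maximality and degree counting, not by a long dipath through the complement of an already-embedded structure; this is precisely what sidesteps the vertex-connectivity obstruction that your approach runs into.
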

\noindent
The proof of Theorem~\ref{thm:cycles} is presented in Section~\ref{sec:mindegreeorientedcycles}. 

Let $k_1, k_2 \in \mathbb{N}$. Following the notation in~\cite{aboulker}, we denote by $C(k_1,k_2)$ the two-block cycle consisting of two vertices $x,y$ and two internally vertex-disjoint dipaths from $x$ to $y$ of length $k_1$ and $k_2$, respectively. 
%Considering the bidirected clique of order $k_1+k_2-1$, which contains no subdivision of $C(k_1,k_2)$ but has minimum out-degree $k_1+k_2-2$, we obtain the trivial lower bound of $\text{mader}_{\delta^+}(C(k_1,k_2)) \ge k_1+k_2-1$. 
As mentioned above, we have the trivial lower bound $\text{mader}_{\delta^+}(C(k_1,k_2)) \ge k_1+k_2-1$. 
Aboulker et al. (see \cite[Theorem~24]{aboulker}) proved the upper bound $\text{mader}_{\delta^+}(C(k_1,k_2)) \le 2(k_1+k_2)-1$. They also observed that the trivial lower bound gives the truth if $k_2 = 1$, showing that $\text{mader}_{\delta^+}(C(k,1)) = k$ for every $k \geq 1$. They then asked whether or not their aforementioned bound on $\text{mader}_{\delta^+}(C(k_1,k_2))$ is tight.
\begin{problem}[\cite{aboulker}, Problem 25]
	For $k_1, k_2 \ge 1$, what is the value of $\text{mader}_{\delta^+}(C(k_1,k_2))$?
\end{problem}
%In Section~\ref{sec:twoblocks} we show that the bound by Aboulker et al.~\cite{aboulker} is not tight, by proving an upper bound on $\text{mader}_{\delta^+}(C(k_1,k_2))$ which is strictly smaller than the bound given in \cite{aboulker} for all values of $k_1, k_2 \ge 2$, and is asymptotically better if $k_1\gg k_2$. Our bound is tight for $k_2=2$, showing that $\text{mader}_{\delta^+}(C(k,2)) = k+1$ for every $k \ge 1$.
\noindent
Our next result improves upon the bound given by Aboulker et al.~\cite{aboulker}.
\begin{theorem}\label{thm:betterbound}
	Let $k_1 \ge k_2 \ge 2$ be integers. Then $\text{mader}_{\delta^+}(C(k_1,k_2)) \le k_1+3k_2-5$.
\end{theorem}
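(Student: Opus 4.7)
My plan is to carry out a longest-dipath argument together with a case analysis on the location of the back-arcs at the endpoint. Let $D$ be a digraph with $\delta^+(D) \ge d := k_1+3k_2-5$, and let $P = v_0 v_1 \cdots v_m$ be a longest dipath in $D$. By maximality, every out-neighbor of $v_m$ lies on $P$; since $v_m$ has out-degree at least $d$, it has back-arcs $v_m \to v_{a_i}$ to vertices with distinct indices $a_1 < a_2 < \cdots < a_d$ in $\{0, 1, \ldots, m-1\}$.

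At most $k_2 - 2$ of the indices $a_i$ can lie in the terminal window $\{m - k_2 + 2, \ldots, m - 1\}$, so the quantity $b := \max\{a_i : a_i \le m - k_2 + 1\}$ is well defined, and at least $d - (k_2 - 2) = k_1 + 2k_2 - 3$ of the $a_i$ satisfy $a_i \le b$; consequently $b \ge k_1 + 2k_2 - 4 \ge k_1$.

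The strategy is to realise the $C(k_1,k_2)$-subdivision with sink $y := v_b$ and source $x := v_i$ for some $i \le b - k_1$. The long dipath of the subdivision is simply the $P$-segment $v_i \to v_{i+1} \to \cdots \to v_b$, of length $b - i \ge k_1$. For the short dipath, the plan is to use a ``forward chord'' $v_i \to v_j$ with $b < j \le m - k_2 + 2$, producing the dipath $v_i \to v_j \to v_{j+1} \to \cdots \to v_m \to v_b$ of length $(m - j) + 2 \ge k_2$, whose final arc is the back-arc $v_m \to v_b$. The two dipaths are internally vertex-disjoint because the internal vertices $v_j, \ldots, v_m$ of the short path all have indices strictly greater than $b$, while the internal vertices of the long path have indices strictly less than $b$.

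The principal obstacle is the degenerate case in which no such forward chord exists --- that is, no arc leaves $\{v_0, \ldots, v_{b - k_1}\}$ with head in $\{v_{b+1}, \ldots, v_{m - k_2 + 2}\}$. A direct counting argument shows that in this regime $b$ must sit close to its lower bound $k_1 + 2k_2 - 4$ and the vertices $v_i$ with $i \le b - k_1$ must concentrate essentially all of their $d$ out-neighbors in the restricted set $\{v_0, \ldots, v_b\} \cup \{v_{m - k_2 + 3}, \ldots, v_m\}$ together with $V(D) \setminus V(P)$. I expect to handle this case by a secondary argument that either re-picks the longest dipath by routing through an off-$P$ out-neighbor of some such $v_i$ (producing a new longest dipath whose back-arc pattern makes the forward chord available), or uses a second back-arc $v_m \to v_{a_\ell}$ with $a_\ell < b$ together with a chord from $v_i$ to some $v_{a_{\ell'}}$ with $a_{\ell'} > b$ to realise the short dipath indirectly. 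The factor of $3$ on $k_2$ in the bound $k_1 + 3k_2 - 5$, as opposed to the factor of $1$ in the trivial lower bound $k_1 + k_2 - 1$, furnishes exactly the slack needed to push this secondary argument through.
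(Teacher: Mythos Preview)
Your plan has a genuine gap in the degenerate case, and neither secondary argument you sketch closes it.

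First, the assertion that ``$b$ must sit close to its lower bound'' is unjustified. Nothing in your setup bounds $b$ from above beyond $b \le m-k_2+1$. When $b \ge k_1+2k_2-3$, the set $\{v_0,\ldots,v_b\}\cup\{v_{m-k_2+3},\ldots,v_m\}$ already contains at least $d$ vertices other than any given $v_i$, so it is perfectly consistent for every $v_i$ with $i \le b-k_1$ to have all of its $d$ out-neighbours there and none in the target window $\{v_{b+1},\ldots,v_{m-k_2+2}\}$ or off $P$. In such a configuration your first fallback (``route through an off-$P$ out-neighbour'') is vacuous: there is no off-$P$ out-neighbour to route through. Even when there is one, rerouting from $v_i$ to some $w \notin V(P)$ does not produce a longer dipath, so you cannot invoke maximality of $P$ to gain any new structure; you would need a separate termination argument that you have not supplied.

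The second fallback (``use a different back-arc $v_m \to v_{a_\ell}$'') only relocates the problem. With sink $v_{a_\ell}$ you now need a forward chord from some $v_i$ with $i \le a_\ell - k_1$ into $\{v_{a_\ell+1},\ldots,v_{m-k_2+2}\}$, and it is easy to arrange configurations in which no such chord exists for \emph{any} admissible $a_\ell$ simultaneously (e.g.\ let every $v_i$ with small index have all of its out-neighbours among vertices of still smaller index on $P$; for large $b$ there is ample room for this).

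The paper sidesteps this by maximizing not a bare longest dipath but a longest \emph{$k_2$-good} dipath $P_0$: one whose endpoint $x$ admits two dipaths $P_1,P_2$ of length $k_2-1$ starting at $x$, pairwise disjoint and disjoint from $P_0$. The two branches needed for $C(k_1,k_2)$ are thus built into the extremal object from the start, and the maximality of $P_0$ as a $k_2$-good path is then exploited separately at the end of each branch to force a return path to $P_0$ (Claims~1 and~2 in the paper). Your single back-arc from $v_m$ provides only one return route; the missing second route is precisely what a plain longest-path hypothesis cannot supply.
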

Theorem \ref{thm:betterbound} improves upon the result of \cite{aboulker} for all values of $k_1,k_2 \geq 2$, and is asymptotically better if $k_1\gg k_2$. Furthermore, if $k_2 = 2$ then the bound in Theorem \ref{thm:betterbound} is optimal, as it matches the aforementioned trivial lower bound, thus showing that $\text{mader}_{\delta^+}(C(k,2)) = k+1$ for every $k \ge 1$. 
%This shows that the trivial lower bound is tight in the case $k_2 = 2$ as well. 
The proof of Theorem \ref{thm:betterbound} appears in Section~\ref{sec:twoblocks}.

%As a second result, in Section~\ref{sec:forests} we prove Conjecture~\ref{con:forests} for a large family of oriented forests, namely for all oriented {\em subcubic} forests without sinks and sources of degree $3$. Here, we say that a digraph is subcubic if every vertex $v$ has undirected degree at most $3$, i.e. $d^+(v) + d^-(v) \leq 3$. 
%\begin{theorem}\label{thm:forests}
%Let $F$ be an oriented subcubic forest such that $\Delta^+(F), \Delta^-(F) \le 2$. Then $F$ is $\delta^+$-maderian. 
%\end{theorem}
To conclude, let us mention that in contrast to the aforementioned negative results for general directed graphs, if we restrict our attention to the class of tournaments, which have an inherent density property, then it can be proved that every digraph is forcible as a subdivision by means of large minimum out-degree. This is a recent result by Gir\~{a}o, Popielarz and Snyder~\cite{GPS}, which in addition gives a best-possible asymptotic bound of $Ck^2$ on the minimum out-degree of a tournament required to guarantee the existence of a subdivision of the bioriented $k$-clique.

As the family of $\delta^+$-maderian digraphs is still somewhat limited, Aboulker et al.~\cite{aboulker} initiated the study of the effect of even stronger density conditions, involing the
\emph{strong vertex-connectivity} $\kappa$, and the \emph{strong arc-connectivity} $\kappa'$
of digraphs. Since $\kappa \leq \kappa'\leq \delta^+$, every $\delta^+$-maderian digraph is obviously $\kappa'$- and $\kappa$-maderian. 
Not much is known however concerning how much richer the families of $\kappa$- and $\kappa'$-maderian digraphs are. The following interesting questions were posed in~\cite{aboulker}:
\begin{problem}[\cite{aboulker}, Problem 16]\label{prob:connectivity}
	Is every digraph $\kappa$-maderian? Is every digraph $\kappa'$-maderian?
\end{problem}
While the first question remains open, we can resolve the second question in the negative by proving that neither the bioriented $4$-clique $\bivec{K}_4$ nor the bioriented $4$-star $\bivec{S}_4$ is $\kappa'$-maderian: 
\begin{proposition}\label{prop:arc_connectivity}
	For every $k \in \mathbb{N}$, there exists a digraph $G_k$ with $\kappa'(G_k) \ge k$ such that $G_k$ contains no subdivision of $\bivec{K}_4$.
\end{proposition}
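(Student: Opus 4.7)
The plan is to reduce the proposition to the existence, for every $k$, of a $k$-arc-connected digraph containing no directed cycle of even length, and then to construct such a digraph. The reduction rests on the key claim that $\bivec{K}_4$ is an \emph{even} digraph in the sense of the paper: every subdivision of $\bivec{K}_4$ contains a directed cycle of even length. This suffices because any digraph with no even directed cycle is automatically $\bivec{K}_4$-subdivision-free.

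To establish the key claim I would run a short parity argument. Fix a subdivision of $\bivec{K}_4$ with branch vertices $v_1, v_2, v_3, v_4$, and let $l_{ij}$ denote the length of the subdivision path $P_{ij}$ from $v_i$ to $v_j$. Then for every pair $\{i,j\}$ the union $P_{ij}\cup P_{ji}$ is a directed cycle of length $l_{ij}+l_{ji}$, and for every cyclic triangle $(i,j,k)$ the union $P_{ij}\cup P_{jk}\cup P_{ki}$ is a directed cycle of length $l_{ij}+l_{jk}+l_{ki}$. Suppose for contradiction that every directed cycle in the subdivision is odd. Summing the two oppositely-oriented triangle cycles on $\{v_1,v_2,v_3\}$ yields
\[
(l_{12}+l_{23}+l_{31}) + (l_{13}+l_{32}+l_{21}) \;=\; (l_{12}+l_{21})+(l_{23}+l_{32})+(l_{13}+l_{31}),
\]
which is simultaneously a sum of two odd numbers (the two triangle lengths on the left) and a sum of three odd numbers (the three digon lengths on the right). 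The left side is even while the right side is odd, a contradiction; hence some cycle in the subdivision must be even, proving that $\bivec{K}_4$ is even.

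Given this reduction, the remaining task is to construct, for each $k$, a digraph $G_k$ with $\kappa'(G_k)\ge k$ and no directed cycle of even length. My plan is to take $G_k$ as a suitably chosen vertex-transitive digraph, for concreteness a circulant $G_k=\mathrm{Cay}(\mathbb{Z}_n;S)$ with $|S|=k$ and $n$ a large prime: vertex-transitivity then delivers $\kappa'(G_k)=\delta^+(G_k)=k$ for free, so the work lies in choosing $S$ so that no simple directed cycle has even length. Since every closed walk in $\mathrm{Cay}(\mathbb{Z}_n;S)$ of length $l$ corresponds to a multiset $(s_1,\dots,s_l)\in S^l$ with $s_1+\cdots+s_l\equiv 0\pmod n$, the absence of even simple cycles reduces to a combinatorial subset-sum condition modulo $n$; for $k=2$ one can verify by hand that $\mathrm{Cay}(\mathbb{Z}_{13};\{2,5\})$ has only odd simple cycles (of lengths $5,7,9,11,13$). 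The main obstacle of the plan is producing such an $S$ for arbitrary $k$: a natural strategy is to take $n$ much larger than the longest conceivable simple cycle and choose $S$ iteratively, or to exploit an algebraic structure on $S$ (e.g., $S$ contained in a "sum-avoiding" configuration) so that any even-length closing walk is forced to revisit a vertex and thus fails to be a simple cycle. Verifying that this construction scales to every $k$ is the heart of the proof.
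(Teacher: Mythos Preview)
Your reduction step is correct: the parity argument shows that every subdivision of $\bivec{K}_3$ (and hence of $\bivec{K}_4$) contains an even directed cycle. However, the second step of your plan cannot be carried out. You propose to construct, for every $k$, a digraph with $\kappa'(G_k)\ge k$ containing \emph{no} even directed cycle. For $k\ge 3$ such digraphs do not exist: a theorem of Thomassen (cited in the paper as~\cite{thom92}) asserts that every digraph $D$ with $\kappa'(D)\ge 3$ contains an even directed cycle. So no choice of circulant, or any other construction, will give what you want once $k\ge 3$; the subset-sum heuristic is chasing an impossible target.

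The paper's construction sidesteps this obstruction by decoupling the arc-connectivity from the even-cycle-free property. One starts from Thomassen's digraphs $D_k$ with $\delta^+(D_k)=k$ and no even dicycle (these of course have low arc-connectivity), builds an auxiliary digraph $G_k'$ from a copy of $D_k$ on $A$ and a copy of its reversal on $B$ with all arcs $B\to A$ added, and finally obtains $G_k$ by adjoining a single universal vertex $v$ with digons to every other vertex. The universal vertex boosts arc-connectivity to $k$ essentially for free. The point is that $G_k$ itself \emph{does} contain even directed cycles, but any subdivision of $\bivec{K}_4$ in $G_k$ would force a subdivision of $\bivec{K}_3$ in $G_k-v=G_k'$; since $G_k'$ has no arcs from $A$ to $B$, any directed cycle in $G_k'$ lies entirely in $D_k$ or its reversal, neither of which has even dicycles. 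Thus one only needs the even-cycle-free property in the \emph{pieces}, not in the final $k$-arc-connected digraph.
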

\begin{proposition}\label{prop:arc_connectivity2}
	For every $k \in \mathbb{N}$, there exists a digraph $H_k$ with $\kappa'(H_k) \ge k$ such that $H_k$ contains no subdivision of $\bivec{S}_4$.
\end{proposition}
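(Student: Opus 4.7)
The plan is to construct, for each $k\in\mathbb{N}$, an explicit digraph $H_k$ with $\kappa'(H_k)\ge k$ and no subdivision of $\bivec{S}_4$.

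\textbf{Reduction to a cycle-counting condition.}
First I would observe the standard equivalence that a digraph $D$ contains a subdivision of $\bivec{S}_4$ if and only if there exists a vertex $v\in V(D)$ through which pass four pairwise internally vertex-disjoint directed cycles: given such cycles one picks an arbitrary non-$v$ vertex on each as a ``leaf''; conversely any $\bivec{S}_4$-subdivision yields such a family by pairing up the two subdivision-paths joining the centre to each leaf. Applying Menger's theorem inside $D-v$ with source set $N^+(v)$ and target set $N^-(v)$, the maximum number of internally vertex-disjoint cycles through $v$ equals the minimum size of a vertex set $S_v\subseteq V(D)\setminus\{v\}$ that separates $N^+(v)$ from $N^-(v)$ in $D-v$. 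Therefore it suffices to construct $H_k$ with $\kappa'(H_k)\ge k$ such that, for every vertex $v$, there exists $S_v$ of size at most $3$ with this separation property.

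\textbf{Construction.}
The digraph $H_k$ will be assembled from a small ``cyclic'' base that already enjoys the three-vertex bottleneck at every vertex, together with a biorientation-free back structure providing arc-connectivity at least $k$. Concretely, I would take a cyclic chain of layers $L_0\to L_1\to\cdots\to L_{n-1}\to L_0$ in which two distinguished layers have size exactly $3$ (the ``bottleneck layers'') and every other layer together with its arcs to the next layer is chosen to be sufficiently arc-connected (for instance, a complete bipartite orientation between consecutive layers, possibly enriched with a few back vertices carrying $k$ parallel return arcs). The bottleneck layers play the role of the set $\{x_1,x_2,x_3\}$ that every directed cycle is forced to traverse.

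\textbf{Verification plan.}
Step 1 is to show $\kappa'(H_k)\ge k$ by exhibiting $k$ arc-disjoint directed paths between every ordered pair of vertices, using a flow argument on the layered cyclic structure and the many parallel arcs between consecutive non-bottleneck layers. Step 2 is to verify the cycle-bottleneck property: for every vertex $v$, one of the two bottleneck layers is disjoint from $v$ and forms a three-vertex separator of $N^+(v)$ from $N^-(v)$ in $H_k-v$. This is because every directed cycle in $H_k$ must make at least one full tour of the cyclic chain, hence traverses both bottleneck layers; picking the one not containing $v$ yields the required separator.

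\textbf{Main obstacle.}
The chief obstacle is the tension between the conditions $\kappa'(H_k)\ge k$ and ``every vertex admits a three-vertex cycle separator.'' High arc-connectivity forces every vertex to have in- and out-degree at least $k$, so each vertex has at least $k$ candidate ``first'' and ``last'' vertices for cycles through it; yet one needs these to be connected inside $H_k-v$ via at most $3$ vertex-disjoint paths. The resolution exploits the gap between arc-connectivity and vertex-connectivity: the layered cyclic construction packs many arcs between neighbouring layers (giving abundant arc-disjoint paths and hence $\kappa'\ge k$), while the two layers of size $3$ ensure that in $H_k-v$ the vertex-connectivity from $N^+(v)$ to $N^-(v)$ is capped at $3$. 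The delicate point, and where the bulk of the work lies, is designing the arcs near the bottleneck layers so that their neighbouring layers still attain in-degree and out-degree at least $k$—this is why auxiliary back vertices furnishing $k$ parallel return arcs are introduced, and one must verify that these additions do not create four internally vertex-disjoint cycles at any back vertex, the bottleneck vertices, or any of the layer vertices.
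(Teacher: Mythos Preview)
Your Menger reduction is fine, but the construction has a genuine gap that cannot be repaired along the lines you sketch. You want a layered cyclic digraph in which every directed cycle passes through each of two bottleneck layers of size~$3$. But if every directed cycle meets a fixed $3$-element set $B_1$, then $H_k-B_1$ is acyclic and therefore has a sink $w$; this forces $N^+_{H_k}(w)\subseteq B_1$ and hence $\delta^+(H_k)\le 3$, contradicting $\kappa'(H_k)\ge k$ for any $k\ge 4$. So the property ``every cycle traverses both bottlenecks'' is outright incompatible with high arc-connectivity. (The phrase ``$k$ parallel return arcs'' is also illegal here, since the paper's digraphs have no parallel edges.)

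You anticipate this tension and propose to repair the degrees near the bottlenecks with auxiliary back vertices and arcs, conceding that one must then ``verify that these additions do not create four internally vertex-disjoint cycles'' anywhere. But that verification is the entire content of the proposition, and you give no mechanism for it. Any arc you add to raise the out-degree of the layer preceding $B_1$ above~$3$ produces a cycle avoiding $B_1$; once such cycles exist, your separator argument (``pick the bottleneck layer not containing $v$'') collapses, and you would need a completely different, vertex-dependent separator --- which your construction does not supply.

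The paper's proof avoids this trap by not attempting a global cycle bottleneck at all. It takes Thomassen's digraphs $R_k$ with $\delta^+(R_k)=k$ and no $\bivec{S}_3$-subdivision as a black box, builds a block $H_k'$ from $R_k$ and its reversal with one-way arcs between them, and then glues two copies of $H_k'$ through just two cut vertices $u,v$. The point is that the $3$-cycle bound at a vertex $w$ inside a block comes from a \emph{$w$-dependent} $2$-separator inherited from the $\bivec{S}_3$-freeness of $R_k$, together with the single global cut vertex $u$ (or $v$); there is no fixed $3$-set meeting all cycles. Reproducing this without invoking Thomassen's construction would amount to rebuilding an $\bivec{S}_3$-free high-out-degree digraph from scratch, which is itself a non-trivial result.
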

\noindent
The proofs of Propositions~\ref{prop:arc_connectivity} and~\ref{prop:arc_connectivity2} are presented in Section~\ref{sec:arcconn}. 

We note that a main difficulty arising when studying subdivisions in digraphs (as opposed to undirected graphs) is that  digraphs of large (strong) vertex-connectivity may not be linked. Recall that a digraph is called {\em $k$-linked} if for every $2k$-tuple of distinct vertices $x_1,\dots,x_k,y_1,\dots,y_k$, there are vertex-disjoint dipaths $P_1,\dots,P_k$ such that $P_i$ goes from $x_i$ to $y_i$. In undirected graphs, it is known that a graph with sufficiently large vertex-connectivity is $k$-linked (see \cite{bollobas_linked}), and linkedness has proven very useful for embedding subdivisions. In stark contrast, a construction of Thomassen~\cite{thom91} shows that for every $k \in \mathbb{N}$ there is a strongly $k$-vertex-connected digraph which is not $2$-linked. This makes subdivision questions for digraphs significantly more challenging.

\paragraph*{Notation.}
Digraphs in this paper are considered loopless, have no parallel edges, but are allowed to have anti-parallel pairs of edges (\emph{digons}). A directed edge (also called arc) with tail $u$ and head $v$ is denoted by $(u,v)$. For a graph $G$, we denote by $V(G)$, $E(G)$ its vertex- and edge-set, respectively. Similarly, the vertex-set (resp. arc-set) of a digraph $D$ is denoted by $V(D)$ (resp. $A(D)$). For $X \subseteq V(D)$, we denote by $D[X]$ the  subdigraph of $D$ induced by $X$. For a set $X$ of vertices or arcs in $D$, we denote by $D-X$ the subdigraph obtained by deleting the objects in $X$ from $D$.
%Given an undirected simple graph $G$, an \emph{orientation} of $G$ is any digon-free digraph with vertex set $V(G)$ in which two vertices are adjacent iff they are adjacent in $G$. 
Given an undirected simple graph $G$, an \emph{orientation} of $G$ is any digraph obtained by replacing each edge $\{u,v\}$ of $G$ with (exactly) one of the arcs $(u,v)$ or $(v,u)$. Evidently, any orientation is digon-free. 
%The \emph{biorientation} $\bivec{G}$ is defined as the digraph with vertex-set $V(G)$ which contains all symmetric arcs, i.e., $A(\bivec{G}):=\{(u,v),(v,u)|uv \in E(G)\}$.
The \emph{biorientation} $\bivec{G}$ is defined as the digraph obtained from $G$ by replacing every edge with a digon, i.e. $A(\bivec{G}):=\{(u,v),(v,u) \; | \; \{u,v\} \in E(G)\}$. 
We use $K_k$ to denote the $k$-clique, $C_k$ to denote the cycle of length $k$, and $S_k$ to denote the star with $k$ edges (all of these notations are for undirected graphs). 
%For $k \in \mathbb{N}$, the \emph{complete digraph of order $k$} or \emph{bidirected $k$-clique} $\bivec{K}_k$ is the biorientation of $K_k$. The \emph{bidirected $k$-star} $\bivec{S}_k$ is the biorientation of the $k$-star, i.e., the graph consisting of $k$ leaves attached to a central vertex. 
For a digraph $D$ and a vertex $v \in V(D)$, we let $N^+(v),$ $N^-(v)$ denote the out- and in-neighborhood of $v$ in $D$ and $d^+(v)$, $d^-(v)$ their respective sizes. We denote by $\delta^+(D)$, $\delta^-(D)$, $\Delta^+(D)$, $\Delta^-(D)$ the minimum or maximum  out- or in-degree of $D$, respectively. 
A \emph{directed walk} in a digraph is an alternating sequence $v_1,e_1,v_2,\ldots,v_{k-1},e_{k-1},v_k$ of vertices and arcs such that $e_i=(v_i,v_{i+1})$ for all $1 \leq i \leq k-1$. The walk is called {\em closed} if $v_k = v_1$. 
We use the words ``path" and ``cycle" to mean an orientation of a path or a cycle (respectively).
For example, a \emph{path} $P$ in a digraph $D$ is an alternating sequence $v_1,e_1,v_2,\ldots,v_{k-1},e_{k-1},v_k$ of pairwise distinct vertices $v_1,\ldots,v_k \in V(D)$ and arcs $e_1,\ldots,e_{k-1} \in A(D)$, such that $e_i$ connects $v_{i}$ and $v_{i+1}$ (i.e., either $e_i = (v_i,v_{i+1})$ or $e_i = (v_{i+1},v_i)$). 
%With a slight abuse of notation, in this paper we identify paths in digraphs with their corresponding subdigraphs of $D$. 
If $e_i=(v_{i},v_{i+1})$ for every $i=1,\ldots,k-1$, then we say that $P$ is a \emph{directed path} or \emph{dipath} from $v_1$ to $v_k$ ({\em $v_1$-$v_k$-dipath} for short). We will call $v_1$ the first vertex of $P$, $v_2$ the second vertex of $P$, $v_k$ the last vertex of $P$, etc. 
We denote by $|P|$ the length of $P$ (i.e. its number of arcs). 
Given two distinct vertices $x \neq y$ on a path $P$, we denote by $P[x,y]=P[y,x]$ the subpath of $P$ with endpoints $x$ and $y$. A vertex $v$ in a digraph $D$ is said to be \emph{reachable} from a vertex $u$ if there exists a $u$-$v$-dipath. In this case, the \emph{distance} from $u$ to $v$ in $D$ is defined as the length of a shortest $u$-$v$-dipath.  For a pair of dipaths $P,Q$ such that the first vertex of $Q$ is the last vertex of $P$, we denote by $P \circ Q$ the concatenation of $P$ and $Q$, i.e. the directed walk obtained by first traversing $P$ and then traversing $Q$. When $P$ (resp. $Q$) consists of a single arc $(x,y)$, we will sometimes write $(x,y) \circ Q$ (resp. $P \circ (x,y)$) instead of $P \circ Q$. 
%A \emph{cycle} in $D$ is a subdigraph of $D$ isomorphic to an orientation of a cycle.
%By the word {\em cycle} we mean an orientation of a cycle.  
A \emph{directed cycle} is a cycle with all arcs oriented consistently in one direction. The {\em directed girth} of $D$, i.e. the minimum length of a directed cycle in $D$, is denoted by $\dgirth(D)$. For a {\em directed} cycle $C$ and two distinct vertices $x,y \in V(C)$, we denote by $C[x,y]$ the segment of $C$ which forms a dipath from $x$ to $y$. 
%A \emph{closed directed walk} in a digraph is an alternating sequence $v_0,e_0,v_1,\ldots,v_{k-1},e_{k-1},v_k=v_0$ of vertices and arcs such that $e_i=(v_i,v_{i+1})$ for all $i$ (addition modulo $k$).
A digraph $D$ is called \emph{weakly connected} (or just {\em connected}) if every two vertices can be connected by a path (i.e., if the underlying undirected graph is connected), and is called \emph{strongly connected} if for every ordered pair of vertices $(x,y) \in V(D) \times V(D)$, $x$ can reach $y$ in $D$. The maximal strongly connected subgraphs of a digraph $D$ are called \emph{(strong) components} and induce a partition of $V(D)$. For a natural number $k \in \mathbb{N}$, a digraph $D$ is called \emph{strongly $k$-vertex (arc)-connected} if for every set $K$ of at most $k-1$ vertices (arcs) of $D$, the digraph $D-K$ is strongly connected. An in- (resp., out-) arborescence is a directed rooted tree in which all arcs are directed towards (resp., away from) the root. 

\paragraph{Preliminaries.}
We now quickly recall Menger's Theorem, which will use in the course of the article.
The following is a well-known variant of Menger's Theorem for \nolinebreak directed \nolinebreak graphs.
\begin{theorem}[see \cite{mengeroriginal}]\label{vertexmenger}
Let $D$ be a digraph and $u, v \in V(D)$ be distinct vertices such that $(u,v) \notin A(D)$. Then for every $k \in \mathbb{N}$, either there are $k$ internally vertex-disjoint $u$-$v$-dipaths in $D$, or there is a set $K \subseteq V(D) \setminus \{u,v\}$ such that $|K| < k$ and $D-K$ contains no $u$-$v$-dipath.
\end{theorem}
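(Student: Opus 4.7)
The plan is the classical reduction to the arc-disjoint version of Menger's theorem via vertex splitting, followed by a direct augmenting-path proof of the arc version. Concretely, I would first construct an auxiliary digraph $D'$ from $D$ by splitting every internal vertex $w \in V(D) \setminus \{u,v\}$ into a pair $w^-, w^+$ joined by a ``split arc'' $(w^-,w^+)$, redirecting every in-arc of $w$ to end at $w^-$ and every out-arc of $w$ to start at $w^+$, while leaving $u$ and $v$ unsplit. The hypothesis $(u,v) \notin A(D)$ guarantees that every $u$-$v$-dipath in $D'$ must traverse at least one split arc. Under the obvious bijection, internally vertex-disjoint $u$-$v$-dipaths in $D$ correspond exactly to arc-disjoint $u$-$v$-dipaths in $D'$, and a vertex separator $K \subseteq V(D) \setminus \{u,v\}$ in $D$ corresponds to the arc separator $\{(w^-, w^+) : w \in K\}$ of the same size in $D'$.

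Next I would prove the arc-disjoint version of Menger's theorem for a general digraph $H$ via the standard augmenting-path / residual-graph argument. Let $\mathcal{P}$ be a maximum family of arc-disjoint $u$-$v$-dipaths in $H$, form the residual digraph $H_{\mathcal{P}}$ by reversing every arc used by some path of $\mathcal{P}$, and let $R$ be the set of vertices reachable from $u$ in $H_{\mathcal{P}}$. Maximality of $\mathcal{P}$ forces $v \notin R$, for otherwise, taking the symmetric difference of the arcs of a $u$-$v$-dipath in $H_{\mathcal{P}}$ with those of $\mathcal{P}$ and decomposing into $u$-$v$-dipaths and closed walks in $H$ would yield a family of $|\mathcal{P}| + 1$ arc-disjoint $u$-$v$-dipaths. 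The set of $H$-arcs going from $R$ to $V(H) \setminus R$ is then an arc separator of size exactly $|\mathcal{P}|$. Applying this to $H := D'$ either produces $k$ arc-disjoint $u$-$v$-dipaths in $D'$ (which lift to $k$ internally vertex-disjoint $u$-$v$-dipaths in $D$) or an arc cut $F$ of size $<k$ in $D'$.

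In the latter case I would argue that $F$ may be chosen to consist solely of split arcs: for any $(x,y) \in F$ with $y \neq v$ that is not a split arc, replacing $(x,y)$ by $(y^-, y^+)$ preserves the cut property without increasing $|F|$, since every $u$-$v$-dipath using $(x,y)$ must subsequently use $(y^-, y^+)$. Iterating this swap (and an analogous one at the $u$-end, or by choosing a minimum cut closest to $u$) eventually yields a minimum arc cut of the form $\{(w^-,w^+) : w \in K\}$ with $|K| < k$, and this $K$ is the required vertex separator of $D$. The main obstacle is the uncrossing step in the arc version: one must verify carefully that the symmetric difference of the augmenting dipath with $\mathcal{P}$, although a priori only a disjoint union of arcs, decomposes into arc-disjoint $u$-$v$-dipaths and closed walks in $H$, so that discarding the closed walks genuinely increases the size of the path family. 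Everything else is routine bookkeeping about the vertex-splitting bijection.
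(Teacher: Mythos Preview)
The paper does not prove this theorem at all; it is stated with a citation to Menger's original 1927 paper and used as a black box (the only argument the paper gives in this vicinity is the short derivation of Theorem~\ref{setmenger} from it). So there is no ``paper's own proof'' to compare your proposal against.

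Your approach---vertex splitting to reduce to the arc-disjoint version, then the augmenting-path/residual-graph argument for the arc version---is a standard and correct proof of directed Menger. Two small remarks. First, in the cut-conversion step you write ``replacing $(x,y)$ by $(y^-,y^+)$'', but in the split digraph $D'$ a non-split arc with head $y \neq v$ already has $y$ of the form $z^-$ for some $z \in V(D)\setminus\{u,v\}$; the arc you want is $(z^-,z^+)$, so your notation is conflating the split vertex $y=z^-$ with the original vertex $z$. Second, the uncrossing step you flag as the ``main obstacle'' is indeed the only place requiring care, but it is routine once you observe that in the symmetric difference every vertex other than $u$ and $v$ has equal in- and out-degree, while $u$ has out-excess and $v$ has in-excess each equal to $|\mathcal{P}|+1$, so an Euler-type decomposition into $|\mathcal{P}|+1$ arc-disjoint $u$-$v$-walks (hence dipaths, after shortcutting) and closed walks follows.
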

Given a digraph $D$ and two (not necessarily disjoint) subsets $A, B \subseteq V(D)$, an \emph{$A$-$B$-dipath} is a directed path in $D$ which starts at a vertex of $A$, ends at a vertex of $B$, and is internally vertex-disjoint from $A \cup B$ 
%(here we allow paths consisting of a single vertex in $A \cap B$).
(an $A$-$B$-dipath is allowed to consist of a single vertex in $A \cap B$). 
%Similarly, for a vertex $u \in V(D)$, by a $u$-$A$-dipath or an $A$-$u$-dipath, respectively, we mean a $\{u\}$-$A$ or an $A$-$\{u\}$-dipath according to the above definition.
If $A$ or $B$ are of size one, say $A = \{u\}$ or $B = \{u\}$, then we will simply write ``$u$-$B$-dipath" or ``$A$-$u$-dipath", respectively.  
%Similarly, for a vertex $u \in V(D)$, by a $u$-$A$-dipath or an $A$-$u$-dipath, respectively, we mean a $\{u\}$-$A$ or an $A$-$\{u\}$-dipath according to the above definition. 
The following is a well-known consequence of Theorem~\ref{vertexmenger}.
\begin{theorem}\label{setmenger}
Let $D$ be a digraph, let $v \in V(D)$ and let $A \subseteq V(D) \setminus \{v\}$. Then either there are $k$ different $v$-$A$-dipaths which pairwise only intersect at $v$, or there is a subset $K \subseteq V(D) \setminus \{v\}$ such that $|K|<k$ and such that there is no dipath in $D-K$ starting in $v$ and ending in $A$. 
\end{theorem}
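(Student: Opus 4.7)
The plan is to reduce Theorem~\ref{setmenger} to the vertex version of Menger's theorem (Theorem~\ref{vertexmenger}) via a standard auxiliary-vertex construction. I would form a new digraph $D'$ by adjoining to $D$ a single new vertex $u^*$ together with an arc $(a, u^*)$ for every $a \in A$. Since $v \notin A$, no arc of $D'$ runs from $v$ to $u^*$, so Theorem~\ref{vertexmenger} applies to the pair $(v, u^*)$ with parameter $k$.

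In the path case of Theorem~\ref{vertexmenger}, there exist $k$ internally vertex-disjoint $v$-$u^*$-dipaths $P_1, \dots, P_k$ in $D'$. Since the only in-neighbours of $u^*$ lie in $A$, each $P_i$ passes through at least one vertex of $A$; I would truncate $P_i$ at the first such vertex to obtain a dipath $Q_i$ in $D$. Each $Q_i$ is then a $v$-$A$-dipath in the sense of the paper's definition (internally disjoint from $\{v\} \cup A$), and since $V(Q_i) \subseteq V(P_i) \setminus \{u^*\}$, the internal vertex-disjointness of the $P_i$ forces the $Q_i$ to pairwise intersect only at $v$.

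In the cut case of Theorem~\ref{vertexmenger}, there is a set $K \subseteq V(D') \setminus \{v, u^*\} = V(D) \setminus \{v\}$ of size less than $k$ such that $D' - K$ contains no $v$-$u^*$-dipath. I would claim that the same $K$ separates $v$ from $A$ in $D$: if $R$ were a $v$-$A$-dipath in $D - K$ ending at some $a \in A$, then $R$ followed by the arc $(a, u^*)$ would be a $v$-$u^*$-dipath in $D' - K$, contradicting the choice of $K$.

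The only mildly delicate point is the truncation step in the path case, which is required to ensure that the output dipaths are internally disjoint from $A$ as the definition demands; taking the first $A$-vertex along each $P_i$ handles this. Beyond that, the construction of $D'$ is entirely routine and the correspondence between the two outcomes of Menger is immediate from how the auxiliary arcs are defined, so I do not anticipate a genuine obstacle.
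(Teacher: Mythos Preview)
Your proposal is correct and follows essentially the same approach as the paper: both add a single auxiliary sink with in-arcs from every vertex of $A$, apply Theorem~\ref{vertexmenger} to the pair $(v,\text{sink})$, truncate each resulting dipath at its first vertex in $A$ in the path case, and observe that the separator $K$ lies in $V(D)\setminus\{v\}$ and works unchanged in the cut case.
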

\begin{proof}
Consider the digraph $H$ obtained from $D$ by adding an artificial vertex $v_A \notin V(D)$ and adding the arc $(y,v_A)$ for every $y \in A$. The claim now follows by applying Theorem~\ref{vertexmenger} to the vertices $v$ and $v_A$ in $H$. Indeed, if there are $k$ internally vertex-disjoint $v$-$v_A$-dipaths in $H$, then by deleting all successors of the first vertex in $A$ from each of these dipaths (i.e. by cutting each of the dipaths as soon as it reaches $A$), we obtain $k$ distinct $v$-$A$-dipaths in $D$ which pairwise only share the vertex $v$. And if we can hit all $v$-$v_A$-dipaths in $H$ with a subset $K \subseteq V(H) \setminus \{v,v_A\}=V(D)\setminus \{v\}$ such that $|K|<k$, then there are no dipaths in $D-K$ starting in $v$ and ending in $A$. This proves the claim.
\end{proof}
%We will further need the following two deep results by Mader on so-called non-critical vertices and on subdivisions in digraphs of sufficiently large out-degree.
%\begin{theorem}[\cite{Mader_critical_connectivity}, see also Section 7.11 in~\cite{BJ-G}]\label{critical strongly connected}
%Let $k \in \mathbb{N}$, and let $D$ be a strongly $k$-vertex-connected digraph with $\delta^+(D),\delta^-(D) \geq 2k$. Then there is $v \in V(D)$ such that $D - v$ is still strongly $k$-vertex-connected. 
%\end{theorem}
%\begin{theorem}[\cite{Mader_trans_4}]\label{subdivtrans4}
%Let $D$ be a digraph such that $\delta^+(D) \ge 3$. Then $D$ contains a subgraph isomorphic to a subdivision of $\vec{K}_4$, the transitive tournament of order $4$.
%\end{theorem}
\section{Subdivisions of oriented cycles}\label{sec:mindegreeorientedcycles}
In this section, we prove Theorem~\ref{thm:cycles}, which we restate here for convenience.
% Our main result is as follows. 
\begin{theorem}\label{thm:cycle_orientation}
	For every $\ell \geq 2$ there is a polynomially bounded $K = K(\ell)$ such that every digraph $D$ with $\delta^+(D) \geq K$ contains a subdivision of every oriented cycle of length $\ell$.  
\end{theorem}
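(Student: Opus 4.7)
My plan is to handle the directed-cycle case separately and then focus on non-directed orientations. If $\vec{C}$ is itself a directed cycle of length $\ell$, the claim reduces to finding a directed cycle of length at least $\ell$ in $D$, which follows from a standard maximal-dipath argument: the last vertex of a maximal dipath has all its $\ge K$ out-neighbours on the dipath, and pigeon-hole yields a directed cycle of length $\ge K\ge\ell$. From here on I shall assume $\vec{C}$ is a non-directed orientation of $C_\ell$, and decompose it into $2k$ maximal directed blocks $B_1,\ldots,B_{2k}$ (with $k\ge 1$) of lengths $\ell_1,\ldots,\ell_{2k}$; its corners alternate between $k$ sources and $k$ sinks. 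The overall goal is to embed these $2k$ corners into $D$ and realize each block $B_i$ as an internally vertex-disjoint dipath of the prescribed length and orientation.

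The engine of the proof I propose is a \emph{fan lemma}: for $K=K(\ell)$ polynomial in $\ell$, if $\delta^+(D)\ge K$ then for every vertex $v\in V(D)$, every ``forbidden'' set $X\subseteq V(D)\setminus\{v\}$ of size at most $\mathrm{poly}(\ell)$, and every $1\le d\le\ell$, the digraph $D-X$ contains many internally disjoint dipaths of length exactly $d$ starting at $v$. This should follow by first extracting from $D-X$ a subdigraph of still large minimum out-degree via a standard degeneracy peeling (so that only a polynomial number of vertices are truly lost beyond $X$), and then running a truncated out-BFS of depth $d$ from $v$; Theorems~\ref{vertexmenger} and~\ref{setmenger} then convert the resulting huge reachable set at depth $d$ into many internally disjoint dipaths of exact length $d$. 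A symmetric ``in-version'' is obtained by running the same argument in the reverse digraph.

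Given the fan lemma, the embedding is built corner-by-corner. Pick an arbitrary image $v_1\in V(D)$ for the first source $s_1$ and realize $B_1$ as one of the many length-$\ell_1$ dipaths out of $v_1$ guaranteed by the fan lemma, producing a candidate image of the next corner $t_1$; from $t_1$, the next block $B_2$ is a dipath \emph{into} $t_1$, routed by the in-version of the fan lemma while forbidding all internal vertices already placed. Iterating this alternation around $\vec{C}$ realizes all $2k$ blocks. The main obstacle, and the technically most delicate step, is the \emph{closure}: the last block $B_{2k}$ must attach back to $v_1$ with the prescribed orientation and length while remaining internally disjoint from the $2k-1$ previously placed dipaths. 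Naively branching by a factor of $\mathrm{poly}(\ell)$ at every corner would yield $K(\ell)=\ell^{\Theta(\ell)}$, which is far from polynomial. To keep $K(\ell)$ polynomial I would instead maintain, throughout the construction, a reservoir of $\mathrm{poly}(\ell)$ candidate images for each remaining corner, and close the cycle by a single global Menger argument (Theorem~\ref{vertexmenger}) at the very end: each previously placed block contributes only $O(\ell)$ forbidden vertices, so the Menger cut needed to defeat the closure has size $O(\ell^2)$, and starting from a reservoir of size $\omega(\ell^2)$ guarantees the required connection. This global closure step, which must simultaneously respect prescribed lengths, orientations, and disjointness across all $2k$ blocks, is where I expect the proof to be most intricate and where the polynomial dependence of $K(\ell)$ is actually pinned down.
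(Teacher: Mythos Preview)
Your proposal has a genuine gap, and it is exactly the gap that makes this theorem difficult. You claim a ``symmetric in-version'' of the fan lemma ``obtained by running the same argument in the reverse digraph.'' But the hypothesis is $\delta^+(D)\ge K$, not $\delta^-(D)\ge K$; reversing all arcs gives a digraph of large minimum \emph{in}-degree, and your out-BFS/Menger argument does not apply there. Concretely, after you place a dipath from your source $s_1$ to a sink $t_1$, the next block is a dipath \emph{into} $t_1$ from a fresh vertex $s_2$. Nothing in the hypothesis prevents $t_1$ from having in-degree one (or all its in-neighbours already used), in which case no second internally disjoint dipath into $t_1$ exists at all. Maintaining a reservoir of poly$(\ell)$ candidate sinks does not help: the out-degree condition gives you no control whatsoever over the in-degrees of those candidates, so you cannot guarantee that \emph{any} candidate admits the required backward fan. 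The same objection recurs at every sink corner, so the corner-by-corner scheme collapses long before the closure step you identify as the delicate part.

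This asymmetry is precisely why the paper's argument is so elaborate. Rather than attempting to route backward from a chosen sink, the paper first passes (via the Dellamonica--Koubek--Martin--R\"odl theorem) to a subdigraph of large directed girth, and then grows a maximal ``chain'' of bounded-size gadgets along a forward dipath. Each gadget (of type I, II, or III) is a local structure, found using only out-degree and girth, that already contains the backward segment needed to manufacture one source--sink alternation; Lemma~\ref{lem:chain_anti_dir_path} shows that a chain with enough gadgets contains a long alternating path, and the closure is obtained by a maximality argument (Lemmas~\ref{lem:gadget_intersection} and~\ref{lem:chain_main}) rather than a direct Menger application. A minor additional issue: your ``degeneracy peeling'' is both unnecessary (already $\delta^+(D-X)\ge K-|X|$) and potentially destructive (it may delete $v$), and Theorem~\ref{setmenger} gives disjoint $v$--$A$ dipaths with no length control, so you would only get length $\ge d$, not exactly $d$ --- which is fine for subdivisions, but should be said.
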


%To prove Theorem \ref{thm:cycle_orientation}, we first use the following result of Dellamonica, Koubek, Martin and R\"{o}dl \cite{DKMR} to reduce to the case that the host digraph $D$ has large (directed) girth. 
%\begin{theorem}[\cite{DKMR}]
%	For every $k \geq 1$ and $g \geq 3$ there exists $K = K(k,g)$ such that every digraph $D$ with $\delta^+(D) \geq K$ contains a subdigraph $D'$ with $\delta^+(D') \geq k$ and with directed girth at least $g$. 
%\end{theorem}

It is well-known and easy to show that every digraph with minimum out-degree $k$ contains a directed cycle of length at least $k+1$. Thus, in what follows we restrict our attention to acyclic oriented cycles. For integers $a,b \geq 1$, let $C_{a,b}$ be the oriented cycle consisting of $2a$ vertices $s_1,\dots,s_a,t_1,\dots,t_a$ and $2a$ internally-disjoint length-$b$ dipaths: one from $s_i$ to $t_i$ and one from $s_i$ to $t_{i+1}$ for each $1 \leq i \leq a$ (with indices taken modulo $a$). See Figure 1 for an illustration of $C_{2,3}$. It is easy to see that for every acyclic oriented cycle $C$, there are $a,b \geq 1$ such that every subdivision of $C_{a,b}$ is also a subdivision of $C$ (specifically, $a$ is the number of sources (or, equivalently, sinks) in $C$, and $b$ is the largest length of a dipath contained in $C$). Therefore, it is sufficient to show that digraphs with minimum out-degree at least $k(a,b)$ contain a subdivision of $C_{a,b}$ (for some suitable choice of $k(a,b) = \poly(a,b)$). For $a = 1$, this statement was proven in \cite{aboulker}, and we also give a new proof in Section~\ref{sec:twoblocks}. Consequently, it is sufficient to consider the case $a \geq 2$ (and, in fact, the assumption $a \geq 2$ is required by our method). 

\begin{figure}[h]\label{fig:b-block_cycle}
	\centering
	\begin{tikzpicture}[scale = 2]
	\foreach \i in {0,1,2,3,4,5,6,7,8,9,10,11,12}
	{
		\coordinate (x\i) at ({cos(\i*360/12)},{sin(\i*360/12)});
		\draw (x\i) node[fill=black,circle,minimum size=2pt,inner sep=0pt] {};
		\coordinate (y\i) at ({1.15*cos(\i*360/12)},{1.15*sin(\i*360/12)});
	} 
	
	%		\foreach \i in {0,3,6,9}
	%		{
	%			\draw (x\i) node[fill=black,circle,minimum size=3pt,inner sep=0pt] {};
	%		} 
	
	%		\foreach \i in {1,2,4,5,7,8,10,11}
	%		{
	%			\coordinate (x\i) at ({cos(\i*360/12)},{sin(\i*360/12)});
	%		} 
	%	
	%		\foreach \i in {0,3,6,9}
	%		{
	%			\coordinate (x\i) at ({1.25*cos(\i*360/12)},{1.25*sin(\i*360/12)});
	%			\coordinate (y\i) at ({1.5*cos(\i*360/12)},{1.5*sin(\i*360/12)});
	%		}
	%	
	%		\foreach \i in {0,1,2,3,4,5,6,7,8,9,10,11}
	%		{
	%			\draw (x\i) node[fill=black,circle,minimum size=2pt,inner sep=0pt] {};	
	%		} 
	
	%		\foreach \i in {0,3,6,9}
	%		{
	%			\draw (y\i) node {$s_{$\i/3$}$};
	%		} 
	\draw (y0) node {$s_{1}$};
	\draw (y3) node {$t_{1}$};
	\draw (y6) node {$s_{2}$};
	\draw (y9) node {$t_{2}$};
	
	\draw[decoration={markings,mark=at position 0.5 with {\arrow{>}}},postaction={decorate}] (x0) -- (x1);
	\draw[decoration={markings,mark=at position 0.5 with {\arrow{>}}},postaction={decorate}] (x1) -- (x2);
	\draw[decoration={markings,mark=at position 0.5 with {\arrow{>}}},postaction={decorate}] (x2) -- (x3);
	\draw[decoration={markings,mark=at position 0.5 with {\arrow{>}}},postaction={decorate}] (x6) -- (x5);
	\draw[decoration={markings,mark=at position 0.5 with {\arrow{>}}},postaction={decorate}] (x5) -- (x4);
	\draw[decoration={markings,mark=at position 0.5 with {\arrow{>}}},postaction={decorate}] (x4) -- (x3);
	\draw[decoration={markings,mark=at position 0.5 with {\arrow{>}}},postaction={decorate}] (x6) -- (x7);
	\draw[decoration={markings,mark=at position 0.5 with {\arrow{>}}},postaction={decorate}] (x7) -- (x8);
	\draw[decoration={markings,mark=at position 0.5 with {\arrow{>}}},postaction={decorate}] (x8) -- (x9);
	\draw[decoration={markings,mark=at position 0.5 with {\arrow{>}}},postaction={decorate}] (x0) -- (x11);
	\draw[decoration={markings,mark=at position 0.5 with {\arrow{>}}},postaction={decorate}] (x11) -- (x10);
	\draw[decoration={markings,mark=at position 0.5 with {\arrow{>}}},postaction={decorate}] (x10) -- (x9);
	\end{tikzpicture}
	\caption{The oriented cycle $C_{2,3}$}
\end{figure}
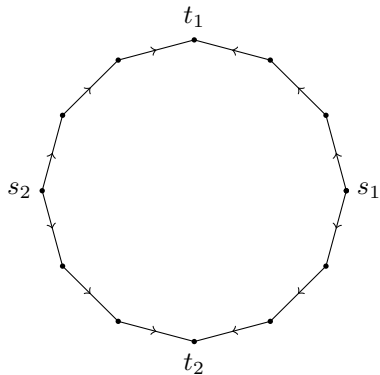

Dellamonica, Koubek, Martin and R\"{o}dl \cite{DKMR} proved that for every $k \geq 1$ and $g \geq 3$ there exists $K = K(k,g)$ such that every digraph $D$ with $\delta^+(D) \geq K$ contains a subdigraph $D'$ with $\delta^+(D') \geq k$ and with directed girth $\dgirth(D)$ at least $g$. Thus, in order to prove Theorem \ref{thm:cycle_orientation}, it suffices to establish the following: 

\begin{theorem}\label{thm:cycle_orientation_main}
There is an absolute constant $C$ such that for every pair of integers $a \geq 2, \; b \geq 1$, every digraph $D$ with $\delta^+(D) \geq Cab^7$ and $\dgirth(D) \geq 4b^2$ contains a subdivision of $C_{a,b}$.
\end{theorem}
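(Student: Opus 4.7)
My plan is to construct the subdivision of $C_{a,b}$ stepwise, building up the sources $s_1,\dots,s_a$, sinks $t_1,\dots,t_a$, and the required $2a$ internally vertex-disjoint dipaths one ``diamond'' (a source with its two outgoing dipaths) at a time, closing the cycle only at the very last step.

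The main technical ingredient is an \emph{out-expansion lemma}: for any vertex $v$ and any forbidden set $X \subseteq V(D) \setminus \{v\}$ of size $|X| \le O(ab)$, there are polynomially many (in $ab$) internally vertex-disjoint dipaths of length $\geq b$ starting at $v$ in $D-X$, ending at distinct vertices. I would prove this by tracing a BFS out-tree from $v$ in $D-X$ down to depth $b$: the min-out-degree condition $\delta^+(D) \geq Cab^7$ forces a large layer at depth $b$ (even after removing $X$), and then Theorem~\ref{setmenger} extracts internally disjoint dipaths. The girth assumption $\dgirth(D) \geq 4b^2$ is crucial here: every dipath-walk of length $\leq b$ in $D$ is actually a simple path (a repeated vertex would create a directed cycle of length at most $b < 4b^2$), which both controls the combinatorics of the BFS layers and guarantees that $D$ contains at least $|V(D)| \cdot \delta^+(D)^b$ length-$b$ dipaths in total.

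Armed with this lemma, I would initialize the construction by picking any vertex $s_1$ and using the out-expansion lemma to find two internally disjoint length-$\geq b$ dipaths from $s_1$ to distinct vertices $t_1$ and $t_2$, where $t_1$ is chosen to additionally have a large ``in-pre-image'' (many vertices of $D$ reach $t_1$ by some length-$\geq b$ dipath). The existence of such a $t_1$ among the candidates follows from an averaging argument over the many length-$b$ dipaths in $D$ counted above. At each subsequent step $i = 2,\dots,a-1$, I use the in-pre-image of $t_i$ (reserved when $t_i$ was introduced) to find a source $s_i$ with a length-$\geq b$ dipath to $t_i$ internally disjoint from the existing structure, and then apply the out-expansion lemma to $s_i$ to find another length-$\geq b$ dipath to a fresh vertex $t_{i+1}$, again chosen to have large in-pre-image.

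The closing step is the subtlest: at $i = a$ we need a source $s_a$ together with length-$\geq b$ dipaths from $s_a$ to $t_a$ and back to the original $t_1$, internally disjoint from the existing structure. Here we use the in-pre-images of both $t_a$ (set up at step $a-1$) and of $t_1$ (preserved from the initialization). The main obstacle will be ensuring that the in-pre-image of $t_1$ survives through all $a-1$ intermediate iterations despite the $O(ab)$ vertices they consume; this is ultimately a bookkeeping task and dictates the polynomial bound $K \geq Cab^7$ in the hypothesis. Loosely, the exponent $7$ arises from compounded losses of factor $b$ at each of several BFS/Menger/expansion applications, together with factors of $a$ and $b$ coming from the total forbidden set size $O(ab)$ tracked over the $a$ iterations.
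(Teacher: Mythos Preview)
Your out-expansion lemma is fine, but the argument breaks at the ``in-pre-image'' step, and this is not a bookkeeping issue. The hypothesis $\delta^+(D)\ge Cab^7$ gives you excellent control over \emph{forward} reachability, but none whatsoever over \emph{backward} reachability: a vertex $t$ can have in-degree $0$, or more generally the set of vertices that reach $t$ by a length-$\ge b$ dipath can be arbitrarily small. Your averaging argument does not fix this. It is true that $D$ contains at least $|V(D)|\cdot k^b$ length-$b$ dipaths, so some vertex $w$ is the endpoint of at least $k^b$ such dipaths; but those $k^b$ dipaths may all start at the same vertex (think of $k$ internally disjoint length-$b$ dipaths from a single $v$ to $w$, which is perfectly compatible with girth $\ge 4b^2$), so $|\text{in-pre-image}(w)|$ could still be $1$. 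Even if you could secure large in-pre-images for each $t_i$, the closing step asks for a vertex $s_a$ lying in the intersection of two such in-pre-images (for the already-fixed $t_1$ and $t_a$) together with two \emph{disjoint} paths to those targets; you have no mechanism to force these two sets to intersect, and even in the intersection no mechanism to get disjoint routings to two prescribed targets. This is exactly the $2$-linkage-type obstacle that makes Mader's conjecture hard.

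The paper sidesteps this entirely. It works with a minimal counterexample, which yields the structural dichotomy that every arc either lies on a girth-length dicycle or has a common in-neighbour with its head (Claim~2); this is the only source of ``backward'' information in the whole proof. The subdivision is not built diamond-by-diamond. Instead one grows a \emph{chain}: a long dipath decorated with local gadgets (short cycles, in-fans, or a Menger-type out-branching structure) that each encode one ``turn'' of an alternating path. Crucially, the chain is extended purely by forward exploration from its last vertex, and closure happens not by reaching back to a prescribed $t_1$ but by the new gadget \emph{running into} an old part of the chain --- which must eventually occur because the chain cannot grow forever. The intersection lemma (Lemma~\ref{lem:gadget_intersection}) then converts any such collision into a subdivision of $C_{a,b}$. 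So the key idea you are missing is: do not try to hit a specific target when closing the cycle; instead, grow a structure rich enough that forward exploration is forced to collide with it, and design the gadgets so that any collision can be turned into the desired closure.
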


A quantitative version\footnote{In fact, the bound on $K(k,g)$ appearing in \cite{DKMR} was slightly weaker --- in that the logarithmic factor depended on $k$ --- but it is easy to see that by using the argument of \cite{DKMR} and replacing a union bound used there with a tighter concentration inequality (say, Chernoff's bound), one obtains the stronger estimate stated here.} of the aforementioned result of \cite{DKMR} is that $K(k,g) \leq O(kg^2\log g)$. It follows that having minimum out-degree at least
$a \cdot \text{poly}(b)$ is enough to force a subdivision of $C_{a,b}$, and that the conclusion of Theorem \ref{thm:cycle_orientation} holds with $K(\ell) = \poly(\ell)$. 
%??? Express the bound in terms of $a$ and $b$. ???

For the rest of this section we set $g := 4b^2$. Our proof of Theorem \ref{thm:cycle_orientation_main} will use a certain structure we call a {\em chain}, that will consist of some carefully chosen {\em gadgets}. This structure will enable us to embed subdivisions of $C_{a,b}$ in the given digraph. We start by presenting these \nolinebreak key \nolinebreak definitions.

\subsection{The Gadgets}
We will use three types of gadgets.
Each of the gadgets will have a special pair of vertices $p,q$ with an arc from $p$ to $q$. The gadgets are defined as follows: 
\begin{enumerate}
%	\item[(I)] A directed cycle of length (exactly) $g$ through the arc $(p,q)$ is called a {\em gadget of type I}.
	\item[(I)] A gadget of type I is a directed cycle of length at least $g$ through the arc $(p,q)$. 
%	\item[(III)] A gadget of type III is a digraph consisting of vertices $u,v,w_1,\dots,w_{\ell}=z_1,z_2,\dots,z_{\ell}$ and arcs $(u,v)$; $(w_i,v)$ for every $1 \leq i \leq \ell$; $(w_i,w_{i-1})$ for every $2 \leq i \leq \ell$; $(w_1,u)$; $(z_i,w_{\ell-1})$ for every $1 \leq i \leq \ell$; $(z_i,z_{i-1})$ for every $2 \leq i \leq \ell$; and $(z_1,w_{\ell})$. 
	\item[(II)] 
	A {\em basic gadget of type II} is a digraph consisting of vertices $p,q,r$ and a dipath $P_1$ from $r$ to $p$, such that $P_1$ has length at least $2b^2 + b - 2$, $q \notin V(P)$, and every vertex of $P_1$ has an arc to $q$ (so in particular, $(p,q)$ is an arc). 
	An {\em extended gadget of type II} consists of a basic gadget of type II, comprised of vertices $p,q,r$ and a dipath $P_1$ as above, as well as an additional dipath $P_2$ of length at least $b$ having the following properties: 
	\begin{enumerate}
		\item The last vertex of $P_2$ is $r$, $V(P_1) \cap V(P_2) = \{r\}$, and $q \notin V(P_2)$. 
		\item Either there is an arc from the first vertex of $P_2$ to the second vertex of $P_1$, or there is an arc from some vertex in $V(P_1) \setminus \{r\}$ to the first vertex of $P_2$. 
	\end{enumerate}
%	For an extended type-II gadget $G$ consisting of dipaths $P_1,P_2$, the {\em basic part} of $G$ is the corresponding basic type-II gadget, namely the subgraph of $G$ induced by $V(P_1) \cup \{q\}$. 
		For an extended type-II gadget $G$, the {\em basic part} of $G$ is the corresponding basic type-II gadget, namely the subgraph of $G$ induced by $V(P_1) \cup \{q\}$. 
	\item[(III)] A gadget of type III is a digraph consisting of vertices $p,q,r$, the arc $(p,q)$, and two internally disjoint dipaths $P_1,P_2$ from $p$ and $q$, respectively, to $r$, such that $P_1$ and $P_2$ have length at least $2b-1$ each. 
\end{enumerate}  

\noindent
The various types of gadgets are depicted in Figures 2-3. For convenience, we also introduce the notion of a {\em trivial gadget}: a trivial gadget simply consists of vertices $p,q$ and the arc $(p,q)$ (and no other vertices). 

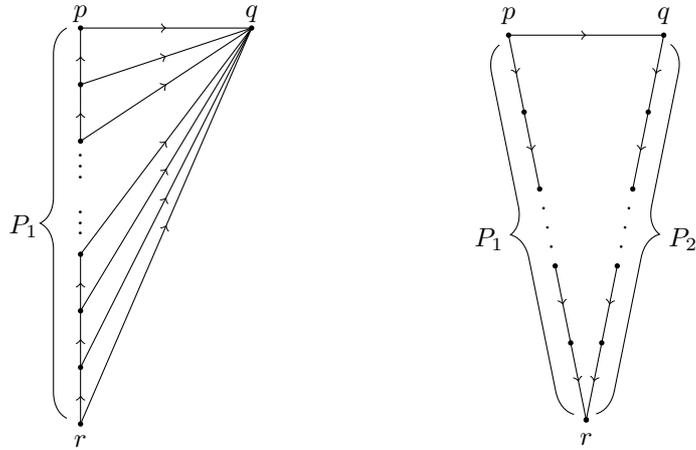
\begin{figure}\label{fig:type_II_gadget_basic}
	\begin{subfigure}{8.5cm}
	\centering
	\begin{tikzpicture}[scale = 0.75]
	\foreach \i in {0,1,2,3,4,5,6,7}
	{
		\coordinate (x\i) at (0,\i);
		% \draw (x\i) node[fill=black,circle,minimum size=2pt,inner sep=0pt] {};
	}
	
	\coordinate (q) at (3,7);
	\draw (q) node[fill=black,circle,minimum size=2pt,inner sep=0pt] {};
	
	\foreach \i in {0,1,2,3,5,6,7}
	{
		\draw (x\i) node[fill=black,circle,minimum size=2pt,inner sep=0pt] {};
		\draw[decoration={markings,mark=at position 0.5 with {\arrow{>}}},postaction={decorate}] (x\i) -- (q);
	}
	
	\foreach \i in {4,5}
	{
		\draw ($(x\i) + (0,-0.3)$) node[] {$\vdots$};
	}
	
	\draw[decoration={markings,mark=at position 0.5 with {\arrow{>}}},postaction={decorate}] (x0) -- (x1);
	\draw[decoration={markings,mark=at position 0.5 with {\arrow{>}}},postaction={decorate}] (x1) -- (x2);
	\draw[decoration={markings,mark=at position 0.5 with {\arrow{>}}},postaction={decorate}] (x2) -- (x3);
	%		\draw[decoration={markings,mark=at position 0.5 with {\arrow{>}}},postaction={decorate}] (x3) -- (x4);
	\draw[decoration={markings,mark=at position 0.5 with {\arrow{>}}},postaction={decorate}] (x5) -- (x6);
	\draw[decoration={markings,mark=at position 0.5 with {\arrow{>}}},postaction={decorate}] (x6) -- (x7);
	
	\draw ($(x7) + (0,0.25)$) node {$p$};
	\draw ($(q) + (0,0.25)$) node {$q$};
	\draw ($(x0) + (0,-0.3)$) node {$r$};
	
%	\draw [decorate,decoration={brace,amplitude=10pt},xshift=-4pt,yshift=0pt]
%	(-0.25,0) -- (-0.25,7) node {};
%	\draw (-1.25,3.5) node {$P_1$};
	
	\draw[decorate,decoration={brace,amplitude=10pt},xshift=-4pt,yshift=0pt]
	(-0.1,0.1) -- (-0.1,7) node {};
	\draw (-1,3.5) node {$P_1$};
	\end{tikzpicture}
%	\caption{A basic gadget of type II}
	\end{subfigure} 
	\begin{subfigure}{0cm}
		\centering
		\begin{tikzpicture}[scale = 1.02]
		\foreach \i in {0,1,2,3,4,5}
		{
			\coordinate (p\i) at ({-\i/5},\i); 
			\coordinate (q\i) at ({\i/5},\i);
			%			\draw (p\i) node[fill=black,circle,minimum size=2pt,inner sep=0pt] {};
			%			\draw (q\i) node[fill=black,circle,minimum size=2pt,inner sep=0pt] {};
		}
		
		\foreach \i in {0,1,2,3,4,5}
		{
			\draw (p\i) node[fill=black,circle,minimum size=2pt,inner sep=0pt] {};
			\draw (q\i) node[fill=black,circle,minimum size=2pt,inner sep=0pt] {};
		}
		
		\foreach \j in {1,2,3}
		{
			\coordinate (a) at ($0.25*\j*(p3)$);
			\coordinate (b) at ($0.25*4*(p2) - 0.25*\j*(p2)$);
			\coordinate (c) at ($(a) + (b)$); 
			\draw (c) node[fill=black,circle,minimum size=1pt,inner sep=0pt] {}; 
		}
		
		\foreach \j in {1,2,3}
		{
			\coordinate (a) at ($0.25*\j*(q3)$);
			\coordinate (b) at ($0.25*4*(q2) - 0.25*\j*(q2)$);
			\coordinate (c) at ($(a) + (b)$); 
			\draw (c) node[fill=black,circle,minimum size=1pt,inner sep=0pt] {}; 
		}
		
		\draw[decoration={markings,mark=at position 0.5 with {\arrow{>}}},postaction={decorate}] (p5) -- (q5);
		
		\draw[decoration={markings,mark=at position 0.5 with {\arrow{>}}},postaction={decorate}] (p5) -- (p4);
		\draw[decoration={markings,mark=at position 0.5 with {\arrow{>}}},postaction={decorate}] (p4) -- (p3);
		%		\draw[decoration={markings,mark=at position 0.5 with {\arrow{>}}},postaction={decorate}] (p3) -- (p2);
		\draw[decoration={markings,mark=at position 0.5 with {\arrow{>}}},postaction={decorate}] (p2) -- (p1);
		\draw[decoration={markings,mark=at position 0.5 with {\arrow{>}}},postaction={decorate}] (p1) -- (p0);
		
		\draw[decoration={markings,mark=at position 0.5 with {\arrow{>}}},postaction={decorate}] (q5) -- (q4);
		\draw[decoration={markings,mark=at position 0.5 with {\arrow{>}}},postaction={decorate}] (q4) -- (q3);
		%		\draw[decoration={markings,mark=at position 0.5 with {\arrow{>}}},postaction={decorate}] (q3) -- (q2);
		\draw[decoration={markings,mark=at position 0.5 with {\arrow{>}}},postaction={decorate}] (q2) -- (q1);
		\draw[decoration={markings,mark=at position 0.5 with {\arrow{>}}},postaction={decorate}] (q1) -- (q0);
		
		\draw ($(p5) + (0,0.25)$) node {$p$};
		\draw ($(q5) + (0,0.25)$) node {$q$};
		\draw ($(p0) + (0,-0.25)$) node {$r$};
		
		\draw [rotate= atan(1/5), decorate, decoration={brace,amplitude=10pt}, xshift=-4pt, yshift=0pt] (-0,0.1) -- (-0,5) node {};
		
		\draw [rotate= atan(-1/5), decorate, decoration={brace,amplitude=10pt}, xshift=-4pt, yshift=0pt] (0.25,5) -- (0.25,0.1) node {};
		
		\draw ($(-0.5,2.5) + (-0.75,-0.15)$) node {$P_1$};
		\draw ($(0.5,2.5) + (0.75,-0.15)$) node {$P_2$};
		\end{tikzpicture}
%		\subcaption{A gadget of type III}
	\end{subfigure}
	\caption{A basic gadget of type II (left) and a gadget of type III (right)}
\end{figure}
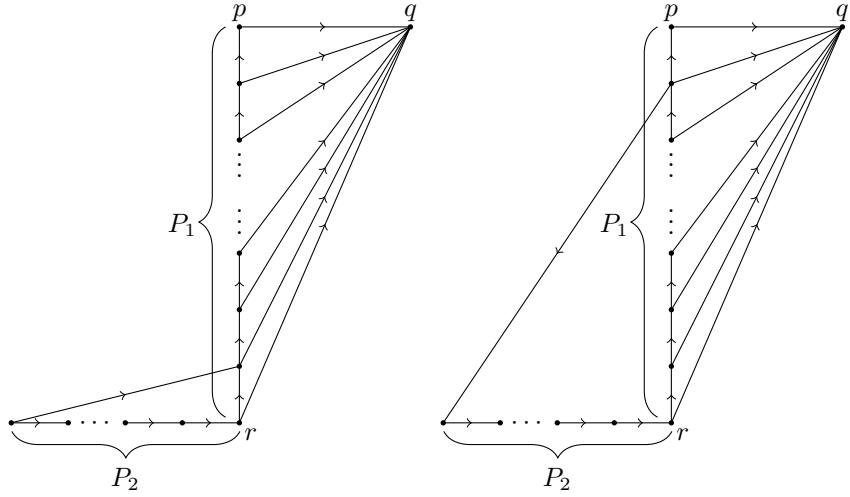
\begin{figure}\label{fig:type_II_gadget_extended}
	\definecolor{mycolor}{RGB}{0,0,0}
	\centering
	\begin{tikzpicture}[scale = 0.75]
	\foreach \i in {0,1,2,3,4,5,6,7}
	{
		\coordinate (x\i) at (0,\i);
		% \draw (x\i) node[fill=black,circle,minimum size=2pt,inner sep=0pt] {};
	}
	
	\coordinate (q) at (3,7);
	\draw (q) node[fill=mycolor,circle,minimum size=2pt,inner sep=0pt] {};
	
	\foreach \i in {0,1,2,3,5,6,7}
	{
		\draw (x\i) node[fill=mycolor,circle,minimum size=2pt,inner sep=0pt] {};
		\draw[decoration={markings,mark=at position 0.5 with {\arrow{>}}},postaction={decorate},color=mycolor] (x\i) -- (q);
	}
	
	\foreach \i in {4,5}
	{
		\draw ($(x\i) + (0,-0.3)$) node[color=mycolor] {$\vdots$};
	}
	
	\draw[decoration={markings,mark=at position 0.5 with {\arrow{>}}},postaction={decorate},color=mycolor] (x0) -- (x1);
	\draw[decoration={markings,mark=at position 0.5 with {\arrow{>}}},postaction={decorate},color=mycolor] (x1) -- (x2);
	\draw[decoration={markings,mark=at position 0.5 with {\arrow{>}}},postaction={decorate},color=mycolor] (x2) -- (x3);
	%		\draw[decoration={markings,mark=at position 0.5 with {\arrow{>}}},postaction={decorate}] (x3) -- (x4);
	\draw[decoration={markings,mark=at position 0.5 with {\arrow{>}}},postaction={decorate},color=mycolor] (x5) -- (x6);
	\draw[decoration={markings,mark=at position 0.5 with {\arrow{>}}},postaction={decorate},color=mycolor] (x6) -- (x7);
	
	\foreach \i in {1,2,3,4}
	{
		\coordinate (y\i) at (-\i,0);
		% \draw (x\i) node[fill=black,circle,minimum size=2pt,inner sep=0pt] {};
	}
	
	\foreach \i in {1,2,3,4}
	{
		\draw (y\i) node[fill=black,circle,minimum size=2pt,inner sep=0pt] {};
	}
	
	\draw ($(y3) + (0.5,0)$) node[] {$\cdots$}; 
	
	\draw[decoration={markings,mark=at position 0.5 with {\arrow{>}}},postaction={decorate}] (y4) -- (y3);
	%		\draw[decoration={markings,mark=at position 0.5 with {\arrow{>}}},postaction={decorate}] (y3) -- (y2);
	\draw[decoration={markings,mark=at position 0.5 with {\arrow{>}}},postaction={decorate}] (y2) -- (y1);
	\draw[decoration={markings,mark=at position 0.5 with {\arrow{>}}},postaction={decorate}] (y1) -- (x0);
	
	\draw[decoration={markings,mark=at position 0.5 with {\arrow{>}}},postaction={decorate}] (y4) -- (x1);
	
	\draw ($(x7) + (0,0.25)$) node[color = mycolor] {$p$};
	\draw ($(q) + (0,0.25)$) node[color = mycolor] {$q$};
	\draw ($(x0) + (0.2,-0.2)$) node[color = mycolor] {$r$};
	
	\draw [decorate,decoration={brace,amplitude=10pt},xshift=-4pt,yshift=0pt,color=mycolor]
	(-0.1,0.1) -- (-0.1,7) node {};
	\draw (-1,3.5) node[color = mycolor] {$P_1$};
	
	\draw [decorate,decoration={brace,amplitude=10pt},xshift=-4pt,yshift=0pt]
	(0.15,-0.15) -- (-3.85,-0.15) node {};
	\draw (-2,-1) node {$P_2$};
	\end{tikzpicture}
	\begin{tikzpicture}[scale = 0.75]
\foreach \i in {0,1,2,3,4,5,6,7}
{
	\coordinate (x\i) at (0,\i);
	% \draw (x\i) node[fill=black,circle,minimum size=2pt,inner sep=0pt] {};
}

\coordinate (q) at (3,7);
\draw (q) node[fill=mycolor,circle,minimum size=2pt,inner sep=0pt] {};

\foreach \i in {0,1,2,3,5,6,7}
{
	\draw (x\i) node[fill=mycolor,circle,minimum size=2pt,inner sep=0pt] {};
	\draw[decoration={markings,mark=at position 0.5 with {\arrow{>}}},postaction={decorate},color=mycolor] (x\i) -- (q);
}

\foreach \i in {4,5}
{
	\draw ($(x\i) + (0,-0.3)$) node[color=mycolor] {$\vdots$};
}

\draw[decoration={markings,mark=at position 0.5 with {\arrow{>}}},postaction={decorate},color=mycolor] (x0) -- (x1);
\draw[decoration={markings,mark=at position 0.5 with {\arrow{>}}},postaction={decorate},color=mycolor] (x1) -- (x2);
\draw[decoration={markings,mark=at position 0.5 with {\arrow{>}}},postaction={decorate},color=mycolor] (x2) -- (x3);
%		\draw[decoration={markings,mark=at position 0.5 with {\arrow{>}}},postaction={decorate}] (x3) -- (x4);
\draw[decoration={markings,mark=at position 0.5 with {\arrow{>}}},postaction={decorate},color=mycolor] (x5) -- (x6);
\draw[decoration={markings,mark=at position 0.5 with {\arrow{>}}},postaction={decorate},color=mycolor] (x6) -- (x7);

\foreach \i in {1,2,3,4}
{
	\coordinate (y\i) at (-\i,0);
	% \draw (x\i) node[fill=black,circle,minimum size=2pt,inner sep=0pt] {};
}

\foreach \i in {1,2,3,4}
{
	\draw (y\i) node[fill=black,circle,minimum size=2pt,inner sep=0pt] {};
}

\draw ($(y3) + (0.5,0)$) node[] {$\cdots$}; 

\draw[decoration={markings,mark=at position 0.5 with {\arrow{>}}},postaction={decorate}] (y4) -- (y3);
%		\draw[decoration={markings,mark=at position 0.5 with {\arrow{>}}},postaction={decorate}] (y3) -- (y2);
\draw[decoration={markings,mark=at position 0.5 with {\arrow{>}}},postaction={decorate}] (y2) -- (y1);
\draw[decoration={markings,mark=at position 0.5 with {\arrow{>}}},postaction={decorate}] (y1) -- (x0);

\coordinate (y4_aux_1) at ($(y4) + (0.25,0.75)$);
\coordinate (y4_aux_2) at ($(y4_aux_1) - (0.01,0.03)$);

%	\draw [line join=round, rounded corners = 3, decorate, decoration={zigzag, segment length=23, amplitude=2, post=lineto, post length=2pt}] (x5) -- (-2,1.5) -- (y4_aux);
%	\draw[decoration={markings,mark=at position 0.5 with {\arrow{>}}},postaction={decorate}] (y4_aux) -- (y4); 

%\draw [line join=round, rounded corners = 2, decorate, decoration={zigzag, segment length=28, amplitude=2, post=lineto, post length=2pt}] (x5) -- (-3,3) --  (y4);
%\draw[decoration={markings,mark=at position 0.5 with {\arrow{>}}},postaction={decorate}] (y4_aux_1) -- (y4_aux_2);

\draw[decoration={markings,mark=at position 0.5 with {\arrow{>}}},postaction={decorate}] (x6) -- (y4);

\draw ($(x7) + (0,0.25)$) node[color=mycolor] {$p$};
\draw ($(q) + (0,0.25)$) node[color=mycolor] {$q$};
\draw ($(x0) + (0.2,-0.2)$) node[color=mycolor] {$r$};

\draw [decorate,decoration={brace,amplitude=10pt},xshift=-4pt,yshift=0pt,color=mycolor]
(-0.1,0.1) -- (-0.1,7) node {};
\draw (-1,3.5) node[color=mycolor] {$P_1$};

\draw [decorate,decoration={brace,amplitude=10pt},xshift=-4pt,yshift=0pt]
(0.15,-0.15) -- (-3.85,-0.15) node {};
\draw (-2,-1) node {$P_2$};
\end{tikzpicture}
%	\caption{The two options for an extended gadget of type II, corresponding to the two possibilities in Item (b) in the definition of such a gadget}
	\caption{The two options for an extended gadget of type II: either there is an arc from the first vertex of $P_2$ to the second vertex of $P_1$ (left), or there is an arc from some vertex in $V(P_1) \setminus \{r\}$ to the first vertex of $P_2$ (right).}
\end{figure}

	We now introduce another useful definition. For integers $a,b \geq 1$, an 
{\em $(a,b)$-alternating-path} is an oriented path $R$ consisting of vertices $s_1,\dots,s_a,t_1,\dots,t_a$ and pairwise internally-disjoint dipaths $Q_1,\dots,Q_a,Q'_1,\dots,Q'_{a-1}$, such that $Q_i$ is a dipath from $s_i$ to $t_i$ (for each $1 \leq i \leq a)$, $Q'_i$ is a dipath from $s_{i+1}$ to $t_{i}$ (for each $1 \leq i \leq a-1$), and $Q_2,\dots,Q_{a-1},Q'_1,\dots,Q'_{a-1}$ have length at least $b$ each. We note that $Q_1$ or $Q_a$ may have length zero (in which case $s_1 = t_1$ or $s_a = t_a$, respectively). In particular, for vertices $u,v$, any dipath from $u$ to $v$ is a $(1,b)$-alternating-path with $s_1 = u$ and $t_1 = v$ (for any value of $b$); and any dipath of length at least $b$ from $u$ to $v$ is a $(2,b)$-alternating-path with $s_2 = t_2 = u$ and $s_1 = t_1 = v$. 
%??? Say something about the concatenation of alternating-paths? ??? 
The path $R$ is called {\em strong} if $Q_1$ and $Q_a$ also have length at least $b$. 
%	The {\em length} of $R$ is defined as the integer $a$. 
When several paths are considered at the same time, we will write $s_i(R),t_i(R),Q_i(R),Q'_i(R)$ (instead of $s_i,t_i,Q_i,Q'_i$) so as to prevent confusion. 
%\newline 
%??? Replace the notation for the dipaths $Q_1,\dots,Q_a,Q'_1,\dots,Q'_{a-1}$? ???
%	\begin{observation}
%		Let $a_1,a_2,b \geq 1$ be integers, and for each $i = 1,2$ let $R_i$ be a strong $(a_i,b)$-alternating path. Suppose that $t_1(R_1) = t_1(R_2)$, $s_{a_1}(R_1) = s_{a_2}(R_2)$ and that $R_1$ and $R_2$ do not share any other vertices. Then $R_1 \cup R_2$ is a subdivision of $C_{a_1+a_2-1,b}$. 
%	\end{observation}  
The following observation follows immediately from the definitions of $C_{a,b}$ and alternating-paths. 

\begin{observation}\label{obs:oriented_cycle}
	Let $a_1,a_2,b \geq 1$ be integers, and for each $i = 1,2$, let $R_i$ be a strong $(a_i,b)$-alternating path. Suppose that $s_1(R_1) = t_{a_2}(R_2)$, $s_1(R_2) = t_{a_1}(R_1)$ and that $R_1$ and $R_2$ do not share any other vertices. Then $R_1 \cup R_2$ spans a subdivision of $C_{a_1+a_2-2,b}$. 
\end{observation}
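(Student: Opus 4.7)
The plan is to verify the observation directly from the definitions by tracing around $R_1 \cup R_2$ and matching the resulting structure with $C_{a_1+a_2-2,b}$. The only real subtlety is what happens at the two identified pairs of endpoints, so I would organise the argument around them. First, I would recall that in an alternating path $R_i$, the sources are $s_1(R_i),\ldots,s_{a_i}(R_i)$ and the sinks are $t_1(R_i),\ldots,t_{a_i}(R_i)$, joined by dipaths $Q_j(R_i)$ from $s_j(R_i)$ to $t_j(R_i)$ and $Q'_j(R_i)$ from $s_{j+1}(R_i)$ to $t_j(R_i)$, each of length at least $b$ by strongness. The two endpoints of the underlying oriented path of $R_i$ are precisely $s_1(R_i)$ and $t_{a_i}(R_i)$.

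Next I would set $v_1 := s_1(R_1) = t_{a_2}(R_2)$ and $v_2 := s_1(R_2) = t_{a_1}(R_1)$ and analyse what happens at these identified vertices. At $v_1$, the dipath $Q_{a_2}(R_2)$ terminates and the dipath $Q_1(R_1)$ begins, so their concatenation is a dipath of length at least $2b$ passing through $v_1$ as an internal vertex; in particular $v_1$ is a pass-through vertex of $R_1 \cup R_2$, i.e.\ neither a source nor a sink. The analogous statement would hold at $v_2$ with $Q_{a_1}(R_1) \circ Q_1(R_2)$.

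Consequently, the sources of $R_1 \cup R_2$ would be $\{s_j(R_i) : 2 \le j \le a_i,\; i=1,2\}$ and the sinks $\{t_j(R_i) : 1 \le j \le a_i-1,\; i=1,2\}$, each of cardinality $a := a_1+a_2-2$. Tracing around $R_1 \cup R_2$ one obtains the cyclic sink-source alternation
\[
t_1(R_1),\, s_2(R_1),\, t_2(R_1),\, \ldots,\, s_{a_1}(R_1),\, t_1(R_2),\, s_2(R_2),\, \ldots,\, s_{a_2}(R_2),\, t_1(R_1),
\]
where every consecutive sink-source pair is connected by a dipath of length at least $b$ (either one of the original $Q_j(R_i), Q'_j(R_i)$, or one of the two concatenations through $v_1,v_2$). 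Relabelling these sources as $\sigma_1,\ldots,\sigma_a$ and sinks as $\tau_1,\ldots,\tau_a$ in this cyclic order, the resulting digraph is a subdivision of $C_{a,b}$. There is no substantive obstacle here beyond bookkeeping; the role of the strongness hypothesis is precisely to ensure that the two dipaths concatenated at $v_1$ and $v_2$ each have length at least $b$.
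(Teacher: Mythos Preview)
Your proposal is correct and is exactly the direct verification from the definitions that the paper alludes to; the paper itself gives no proof, merely noting that the observation ``follows immediately from the definitions of $C_{a,b}$ and alternating-paths.'' Your bookkeeping of how the two identified endpoints $v_1,v_2$ become pass-through vertices (via the concatenations $Q_{a_2}(R_2)\circ Q_1(R_1)$ and $Q_{a_1}(R_1)\circ Q_1(R_2)$), and the resulting count of $a_1+a_2-2$ sources and sinks, is precisely the content one would write out if asked to justify the observation in detail.
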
 

%Let us now prove some simple facts about the I- and II--type gadgets. The following lemma 
%follows immediately from the definitions of these gadgets, and its proof is thus omitted. 

\noindent
Let us now prove some simple facts about type--I and type--II gadgets.

\begin{lemma}\label{lem:gadget_property_basic}
	Let $G$ be a gadget of type I or II (either basic or extended). Then:
	\begin{enumerate}
		\item $G$ contains a $(2,b)$-alternating path $R_0$ with $s_1(R_0)=t_1(R_0) = p$ and $t_2(R_0) = q$.
		\item $\{p,q\}$ is reachable from every vertex of $G$. 
	\end{enumerate}
\end{lemma}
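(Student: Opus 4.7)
The plan is to establish both parts by a direct case analysis on the three gadget types, exhibiting explicit alternating paths and reachability dipaths. Since the desired alternating path $R_0$ has $s_1 = t_1 = p$ and $t_2 = q$, the dipath $Q_1$ is trivial at $p$, and by the definition of a $(2,b)$-alternating path only $Q'_1$ (from $s_2$ to $p$) is required to have length at least $b$; $Q_2$ (from $s_2$ to $q$) just needs to be internally vertex-disjoint from $Q'_1$.

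If $G$ is of type I, I would write the directed cycle as $p \to q \to u_1 \to \cdots \to u_m \to p$ with $m \ge g - 2 = 4b^2 - 2$. Take $s_2 = t_2 = q$, let $Q_2$ be trivial at $q$, and let $Q'_1 := q \to u_1 \to \cdots \to u_m \to p$, of length $m+1 \ge g - 1 \ge b$; the resulting $R_0$ is the simple oriented path $p \leftarrow u_m \leftarrow \cdots \leftarrow u_1 \leftarrow q$. If $G$ is of type II (basic or extended), I would write $P_1 = u_0 u_1 \cdots u_\ell$ with $u_0 = r$, $u_\ell = p$, and $\ell \ge 2b^2 + b - 2 \ge b$. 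Take $s_2 := u_{\ell - b}$, let $Q'_1$ be the subpath $u_{\ell - b} \to u_{\ell - b + 1} \to \cdots \to u_\ell$ (of length exactly $b$), and let $Q_2$ be the single arc $(u_{\ell - b}, q)$, which exists by definition of the basic type-II gadget. These two dipaths are internally vertex-disjoint since $Q_2$ has no internal vertices and every internal vertex of $Q'_1$ lies on $P_1$ and is therefore distinct from $q$. For the extended type II case, the additional dipath $P_2$ and connecting arc are not used in constructing $R_0$; the same construction on the basic part suffices.

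For part 2, each case is again a direct consequence of the structural definition. In type I, any vertex of the cycle reaches $p$ by traversing arcs forward around the cycle. In a basic type II gadget, every vertex of $P_1$ has a direct arc to $q$, and $q$ reaches itself trivially. In an extended type II gadget, it only remains to treat the vertices on $P_2 \setminus \{r\}$: any such vertex traverses $P_2$ to reach $r = u_0 \in V(P_1)$, from which the arc $(u_0, q)$ reaches $q$.

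The main (though minor) subtlety is verifying that $R_0$ is actually a simple oriented path in each case; this follows immediately from the fact that $q \notin V(P_1)$ in the type II setting and from the fact that the cycle in type I has no repeated vertices. I do not anticipate any real obstacle beyond these routine checks.
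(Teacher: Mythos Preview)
Your proposal is correct and follows essentially the same approach as the paper: a direct case analysis on the gadget type, exhibiting the required alternating path and reachability explicitly. The only cosmetic difference is that for type II you set $s_2(R_0)$ to be the vertex $u_{\ell-b}$ on $P_1$ and use a length-$b$ subpath for $Q'_1$, whereas the paper simply takes $s_2(R_0) = r$ and $Q'_1(R_0) = P_1$ in its entirety; both choices work for the same reason.
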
 
\begin{proof}
	Item 2 follows immediately from the definitions of these gadgets. Let us prove Item 1. If $G$ is of type I, i.e. a directed cycle of length at least $g > b$ through $(p,q)$, then define $R_0$ by letting $s_1(R_0) = t_1(R_0) = p$, $s_2(R_0) = t_2(R_0) = q$ and $Q'_1(R_0) = G[q,p]$ (i.e., $Q'_1(R_0)$ is simply the $q$-$p$-dipath obtained from the cycle by removing the arc $(p,q)$). If $G$ is of type II then define $R_0$ by letting $s_1(R_0) = t_1(R_0) = p$, $s_2(R_0) = r$, $t_2(R_0) = q$, $Q'_1(R_0) = P_1$ and $Q_2(R_0) = (r,q)$. 
\end{proof}

\begin{lemma}\label{lem:gadget_property_extended}
	Let $G$ be an extended gadget of type II, and let $p,q,r$ and $P_1,P_2$ be as in the definition of such a gadget. Then: 
	\begin{enumerate}
		\item For every $x \in V(G) \setminus \{p,q\}$, there exists $1 \leq a \leq 2$ and an $(a,b)$-alternating-path $R$ with $t_a(R) = x$, $s_1(R) \in \{p,q\}$ and $|V(R) \cap \{p,q\}| = 1$. 
		\item For every non-empty set $X \subseteq V(P_1) \setminus \{p,r\}$, there exists $1 \leq a \leq 2$ and an $(a,b)$-alternating-path $R$ with $t_a(R) \in X$, $s_1(R) \in \{p,q\}$ and $|V(R) \cap \{p,q\}| = |V(R) \cap X| = 1$.  
	\end{enumerate}
	
%	\newline
%	??? Add the possibility that $R$ is a $(1,b)$-alternating-path.
\end{lemma}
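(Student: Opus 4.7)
The plan is to prove both items by explicit case analysis, exhibiting $(a,b)$-alternating-paths $R$ that live entirely inside $G$. The key local observation is that in $G$, the vertex $q$ has no outgoing arcs, and $p = w_m$ has at most one outgoing arc (namely $(p,q)$), with the single exception of Option~B where the extra arc is incident to $p$ itself (i.e.\ the arc $(w_{j^{*}},u_1)$ with $j^{*} = m$), in which case $p$ additionally has the out-arc $(p,u_1)$. Hence, apart from that boundary sub-case, we must take $a = 2$, and the constraint $|V(R)\cap\{p,q\}| = 1$ forces $s_1 = t_1 \in \{p,q\}$ with $Q_1$ trivial. Write $P_1 = w_0 w_1 \cdots w_m$ with $w_0 = r$, $w_m = p$, and $P_2 = u_1 u_2 \cdots u_k$ with $u_k = r$; the task then reduces to choosing a source $s_2$ together with internally disjoint dipaths $Q_2 \colon s_2 \to t_2$ and $Q'_1 \colon s_2 \to t_1$ (the latter of length at least $b$), avoiding whichever of $\{p,q\}$ is not $s_1$.

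For Item~1, we distinguish according to where $x$ sits in $G$. If $x = w_i$ with $i \le m-b$, we set $s_1 = t_1 = q$, $s_2 = t_2 = w_i$, and let $Q'_1 = w_i \to w_{i+1} \to \cdots \to w_{i+b-1} \to q$, which has length exactly $b$ and avoids $p$. When instead $i > m-b$, we exploit the extension $P_2$: in Option~A, take $s_2 = u_1$ with $Q_2 = u_1 \to w_1 \to \cdots \to w_i$ (using $(u_1,w_1)$) and $Q'_1 = u_1 \to u_2 \to \cdots \to u_k = r \to q$, of length $k \ge b+1$; in Option~B we branch at $w_{j^{*}}$ (which has three outgoing arcs) and use the forward-$P_1$ or $P_2$-cycle route according to whether $j^{*} \le i$, $i < j^{*} < m$, or $j^{*} = m$, the last sub-case being treated by the $a = 1$ dipath $p \to u_1 \to \cdots \to u_k = r \to w_1 \to \cdots \to w_i$. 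For $x = u_j \in V(P_2) \setminus \{r\}$, a uniform construction works in both options: take $s_1 = t_1 = q$, $s_2 = t_2 = u_j$ and let $Q'_1$ traverse $P_2$ from $u_j$ to $r$ and then advance along $P_1$ to some $w_{j''}$ with $j'' \le m-1$, finally using the arc $(w_{j''},q)$, with $j''$ chosen large enough to guarantee length at least $b$.

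Item~2 is handled by refining the choice of $w_i \in X$ in the constructions above so that $V(R)\cap X = \{w_i\}$. In Option~A, taking $w_i$ to be the minimum-index member of $X$ makes the detour $u_1 \to w_1 \to \cdots \to w_i$ visit only $P_1$-indices $\le i$, so no other $X$-vertex is touched. In Option~B we split on the position of $j^{*}$: if $j^{*} < \min X$, the analogous construction with $s_2 = w_{j^{*}}$, $Q_2 = w_{j^{*}} \to \cdots \to w_{\min X}$ and $Q'_1 = w_{j^{*}} \to u_1 \to \cdots \to r \to q$ works; if $\min X \le j^{*} < m$, we take $w_i$ to be the maximum-index $X$-vertex with index at most $j^{*}$ and use $Q'_1 = w_i \to w_{i+1} \to \cdots \to w_{j^{*}} \to u_1 \to \cdots \to u_k = r \to q$, whose length $j^{*} - i + k + 1$ is at least $b$ and which meets $X$ only at $w_i$ by maximality; and if $j^{*} = m$, the $a = 1$ dipath $p \to u_1 \to \cdots \to r \to w_1 \to \cdots \to w_{\min X}$ does the job, since by minimality no $w_\ell$ with $\ell < \min X$ lies in $X$.

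The main obstacle will be juggling the three simultaneous demands in Item~2 under Option~B, namely that $Q'_1$ has length at least $b$ while avoiding $p$, that $V(R) \cap X = \{w_i\}$, and that $Q_2$ and $Q'_1$ are internally disjoint. The girth hypothesis $\dgirth(D) \ge g = 4b^2$ enters precisely here: because $r \to w_1 \to \cdots \to w_{j^{*}} \to u_1 \to \cdots \to u_k = r$ is a directed cycle of $G \subseteq D$, its length $j^{*} + k$ satisfies $j^{*} + k \ge g$, which furnishes the ``extra length'' used by the cycle-based $Q'_1$ whenever the direct forward segment along $P_1$ is too short; verifying in each sub-case that the chosen construction really meets the stated conditions is then routine.
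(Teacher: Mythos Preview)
Your case analysis is correct and essentially parallel to the paper's, though organised differently: the paper proves Item~2 first and then derives Item~1 from it (applying Item~2 with $X=\{x\}$ when $x\in V(P_1)\setminus\{p,r\}$, and handling $x\in V(P_2)$ via $s_1=t_1=p$ and $Q'_1=P_2[x,r]\circ P_1$), whereas you treat Item~1 directly with an extra sub-case $i\le m-b$ that bypasses the extension entirely. Your Option~B split into $j^*<\min X$, $\min X\le j^*<m$, $j^*=m$ is exactly the paper's split into ``$w=p$'' versus ``$w\neq p$ and $w$ before/after the nearest $X$-vertex on $P_1$'', just re-indexed.

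There is one point to flag. You invoke the girth hypothesis $\dgirth(D)\ge g$ to bound the length of the cycle-based $Q'_1$, but the lemma is stated purely about the gadget $G$, with no ambient digraph $D$ and no girth assumption available. Fortunately you do not actually need it: in every one of your constructions the path $Q'_1$ contains $P_2$ (or $P_2$ minus its first vertex) followed by the arc $(r,q)$, and since $|P_2|\ge b$ by the definition of an extended type-II gadget, this alone gives $|Q'_1|\ge b$. So the proof stands once you replace the girth appeal by the direct bound $k-1=|P_2|\ge b$; the paper likewise uses only $|P_2|\ge b$ at this step.
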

\begin{proof}
	We start by proving Item 2, from which Item 1 will then easily follow. So let $\emptyset \neq X \subseteq V(P_1) \setminus \{p,r\}$.
	Denote by $z$ the first vertex of $P_2$, and by $y$ the second vertex of $P_1$. 
	By the definition of an extended type-II gadget, either $(z,y) \in A(G)$ or there is some $w \in V(P_1) \setminus \{r\}$ such that $(w,z) \in A(G)$. 
	Suppose first that $(z,y) \in A(G)$. Traverse the dipath $P_1$ starting from $y$ until the first vertex of $X$ is reached, and denote this vertex by $x$. Evidently, we have $V(G) \cap V(P_1[y,x]) = \{x\}$.   
	Now define $R$ by setting $s_1(R) = t_1(R) = q$, $s_2(R) = z$, $t_2(R) = x$, $Q'_1(R) = P_2 \circ (r,q)$ and $Q_2(R) = (z,y) \circ P_1[y,x]$. Then $R$ is indeed a $(2,b)$-alternating-path (since $|P_2| \geq b$), and we have $V(R) \cap \{p,q\} = \{q\}$ (since $x \neq p$) and $V(R) \cap X = \{x\}$, as required. 
	
	Suppose now that there is $w \in V(P_1) \setminus \{r\}$ such that $(w,z) \in A(G)$.
	If $w = p$ then, as before, we let $x$ be the first vertex of $X$ reached when traversing $P_1[y,p]$. Observe that $(w,z) \circ P_2 \circ P_1[r,x]$ is a dipath from $p = w$ to $x$, and thus also a $(1,b)$-alternating-path $R$ with $s_1(R) = p$ and $t_1(R) = x$. Moreover, our choice of $x$ implies that $V(R) \cap X = \{x\}$, as required. 
	
	So from now we assume that $w \neq p$. In this case, choose an element $x' \in X$, which is closest to $w$ in the {\em undirected} path underlying $P_1$. In other words, we choose $x' \in X$ such that the subpath of $P_1$ between $w$ and $x'$ contains no vertex of $X$ other than $x'$ itself.  
	We consider two cases, based on the relative position of $x'$ and $w$ along $P_1$. 
%	Assume first either that $w = x'$, or that when traversing $P_1$ (starting from $r$), $w$ is reached before $x'$ is. 
	Assume first that when traversing the dipath $P_1$ (starting from $r$), $w$ is reached before $x'$ is (here we allow $w = x'$). 
	In this case, define $R$ by setting $s_1(R) = t_1(R) = q$, $s_2(R) = w$, $t_2(R) = x'$, $Q'_1(R) = (w,z) \circ P_2 \circ (r,q)$ and $Q_2(R) = P_1[w,x']$. 
	Assume now that $x'$ is reached before $w$ when traversing $P_1$. 
	In this case, define a $(2,b)$-alternating-path $R$ by setting 
	$s_1(R) = t_1(R) = q$, $s_2(R) = t_2(R) = x'$ and
	$Q'_1(R) = P_1[x',w] \circ (w,z) \circ P_2 \circ (r,q)$. 
	Observe that in both cases, $R$ is indeed a $(2,b)$-alternating-path (because $|P_2| \geq b$), $V(R) \cap \{p,q\} = \{q\}$ (because $w,x' \neq p$), and $V(R) \cap X = \{x'\}$ (by our choice of $x'$). This concludes the proof of Item 2. 
	
	It remains to prove Item 1. So let $x \in V(G) \setminus \{p,q\}$. 
	If $x \in V(P_2)$, then we define a $(2,b)$-alternating-path $R$ by setting $s_1(R) = t_1(R) = p$, $s_2(R) = t_2(R) = x$ and $Q'_1(R) = P_2[x,r] \circ P_1$. And if 
	$x \in V(G) \setminus (V(P_2) \cup \nolinebreak \{p,q\}) = V(P_1) \setminus \{p,r\}$, then we obtain the required alternating-path $R$ by applying Item 2 with $X := \{x\}$. 
	It is easy to see that in both cases, $R$ satisfies the assertion of Item 1. This completes the proof of the lemma. 
\end{proof} 

\subsection{Gadget Chains}
We now define the notion of a {\em chain} of gadgets, a structure which will be instrumental to our proof of Theorem \ref{thm:cycle_orientation_main}. In what follows, for a gadget $G$, we will denote by $p(G)$ and $q(G)$ the designated vertices $p$ and $q$ of $G$, respectively. 
%By ``gadget" we will mean a gadget of one of the four types defined above. 
\begin{definition}\label{def:chain}
	A {\em chain} $\mathcal{C}$ consists of a directed path $P = v_0,\dots,v_m$, a partition 
	$A_1 \cup A_2 = A(P) = \{(v_0,v_1),\dots,(v_{m-1},v_m)\}$ of the arc-set of $P$, and a collection of (non-trivial) gadgets $(G_e : e \in A_2)$ having the following four properties:
	\begin{enumerate}
		\item For every $e \in A_2$, the gadget $G_e$ is either of type I, type III, or basic type-II.
		\item For every $e = (v_i,v_{i+1}) \in A_2$, $p(G_e) = v_i$ and $q(G_e) = v_{i+1}$. 
%		\item The sets $V(G_{(v_i,v_{i+1})}) \setminus \{v_i,v_{i+1}\}$, where $(v_i,v_{i+1}) \in A_2$, are pairwise-disjoint. Moreover, each of these sets is disjoint from $\{v_0,\dots,v_m\}$.  
%		\item  
%		The sets 
%		$\left( V(G_{(v_i,v_{i+1})}) \setminus \{v_i,v_{i+1}\} \; : \; (v_i,v_{i+1}) \in A_2 \right)$ are pairwise-disjoint, and
%%		each of these sets is disjoint from $\{v_0,\dots,v_m\}$. 
%		\linebreak$V(G_{(v_i,v_{i+1})}) \cap \{v_0,\dots,v_m\} = \{v_i,v_{i+1}\}$ for every $(v_i,v_{i+1}) \in A_2$, 
%		and 
%		$V(G_e) \cap V(G_f) \subseteq \{v_0,\dots,v_m\}$ for every pair of distinct $e,f \in A_2$.
		\item $V(G_{(v_i,v_{i+1})}) \cap \{v_0,\dots,v_m\} = \{v_i,v_{i+1}\}$ for every $(v_i,v_{i+1}) \in A_2$.
		\item $V(G_e) \cap V(G_f) \subseteq \{v_0,\dots,v_m\}$ for every pair of distinct $e,f \in A_2$.
	\end{enumerate} 
	We will use the following terminology and notation:
	\begin{itemize}
	\item With a slight abuse of notation, we identify the chain $\mathcal{C}$ and the digraph consisting of the union of $P$ and the gadgets $G_e, e \in A_2$.
		\item For convenience, for $(v_i,v_{i+1}) \in A_1$ we denote by $G_{(v_i,v_{i+1})}$ the trivial gadget with vertices $v_i, v_{i+1}$ and arc $(v_i,v_{i+1})$. 
		\item In cases where several chains are considered at the same time, we will write $A_1(\mathcal{C})$, $A_2(\mathcal{C})$ and $G_e(\mathcal{C})$ to indicate that we are considering the chain $\mathcal{C}$. 
		\item The dipath $P$ is called the {\em spine} of the chain, and $|P| = m$ is the {\em length} of the chain.
		\item The vertex set of $\mathcal{C}$, denoted $V(\mathcal{C})$, is defined as $V(\mathcal{C}) = V(P) \cup \bigcup_{e \in A_2}V(G_e)$.
%		\item For each vertex $x \in V(\mathcal{C})$, define $d_{\mathcal{C}}(x)$ to be the shortest length of a dipath in $\mathcal{C}$ from $x$ to $v_t$. Note that $d(v_i) = m - i$ for every $0 \leq i \leq m$, and that for every $e = (v_i,v_{i+1}) \in A_2$ and $x \in V(G_e)$ it holds that $d(x) > d(v_{i+1})$. 
		\item For integers $0 \leq i < j \leq m$, we denote by $\mathcal{C}[v_i,v_j]$ the {\em subchain} of $\mathcal{C}$ whose spine is $P[v_i,v_j] = v_i,v_{i+1}, \dots,v_j$; so $A_{\ell}(\mathcal{C}[v_i,v_j]) = A_{\ell}(\mathcal{C}) \cap A(P[v_i,v_j])$ for $\ell = 1,2$, and $\mathcal{C}[v_i,v_j]$ inherits the gadgets of $\mathcal{C}$. 
%		\item We say that $\mathcal{C}$ is {\em tight} if $(v_{m-1},v_m) \in A_2$ and if among any $\dots$ consecutive arcs of $P$, there is an arc belonging to $A_2$.   
	\end{itemize}
%	The {\em length} of $\mathcal{C}$ is defined as $t = |A(P)|$. 
%	The dipath $P$ is called the {\em spine} of the chain. 
%%	The {\em arc-set} of the chain is $A(P) \cup \{(w_e,v_i),(w_e,v_{i+1}) : e = (v_i,v_{i+1}) \in A_3, \; 0 \leq i \leq t-1\}$.
%	The vertex set of $\mathcal{C}$, denoted $V(\mathcal{C})$, is defined as the union of the sets $V(P) = \{v_0,\dots,v_t\} \cup \bigcup_{e \in A_2}V(G_e)$. For integers $0 \leq i < j \leq t$, we denote by $\mathcal{C}[v_i,v_j]$ the {\em subchain} of $\mathcal{C}$ with spine $P[v_i,v_j] = v_i,v_{i+1}\dots,v_j$ (and with the same gadgets as $\mathcal{C}$).  
%	We will say that $\mathcal{C}$ is {\em tight} if $(v_{t-1},v_t) \in A_2$ and if among any $\dots$ consecutive arcs of $P$, there is an arc belonging to $A_2$. 
\end{definition} 
%	In other words, Item 2 states that every $e \in A_2$ spans a digon, and Item 3 states that for every $e \in A_3$, both endpoints of $e$ are dominated by $w_e$. Tightness means that $(v_{t-1},v_t) \in A_2 \cup A_3$ and that $A_1$ cannot contain a pair of consecutive edges of $P$. 
%	The following lemma demonstrates that chains are useful for finding subdivisions of (acyclic) oriented cycles. 
The next sequence of lemmas is concerned with embedding subdivisions of $C_{a,b}$ using gadget chains. The following lemma asserts that such chains can be used to find $(a,b)$-alternating-paths. 
%Let $P_{a,b}$ denote the oriented path obtained from $C_{a,b}$ by deleting the dipath from $s_a$ to $t_1$. In other words, $P_{a,b}$ consists of $2a$ vertices $s_1,\dots,s_a,t_1,\dots,t_a$ and of $2a-1$ internally-disjoint length-$b$ dipaths: one from $s_i$ to $t_i$ for each $1 \leq i \leq a$, and one from $s_i$ to $t_{i+1}$ for each $1 \leq i \leq a-1$. 
	\begin{lemma}\label{lem:chain_anti_dir_path}
		Let $a,b \geq 1$ be integers. Let $\mathcal{C}$ be a chain, let $P = v_0,\dots,v_m$ and $A_1,A_2$ be as in Definition \ref{def:chain}, and suppose that $|A_2| \geq a(b+1) - 1$. Then
		$\mathcal{C}$ contains a strong $(a,b)$-alternating-path $R$ 
		with $s_1(R) = v_0$ and $t_a(R) = v_m$. 
	\end{lemma}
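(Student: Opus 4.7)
The plan is to construct $R$ explicitly using $a-1$ carefully chosen gadgets from $A_2$, with the spine providing the forward dipaths $Q_i$ and each selected gadget providing one backward dipath $Q'_i$. The base case $a=1$ is immediate: the spine $P$ itself is a strong $(1,b)$-alt-path from $v_0$ to $v_m$, since $|P|=m\geq |A_2|\geq b$.

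For $a\geq 2$, index the arcs of $A_2$ by spine position as $e'_1,\ldots,e'_N$ with $e'_\ell=(v_{j'_\ell},v_{j'_\ell+1})$, $j'_1<\cdots<j'_N$, and $N\geq a(b+1)-1$. Select the $a-1$ gadgets $G_i:=G_{e'_{\ell_i}}$ at indices $\ell_i:=i(b+1)$, and set $j_i:=j'_{\ell_i}$. Strict monotonicity of the $j'_\ell$ together with the choice of the $\ell_i$ yield the spacings $j_1\geq b$, $j_{i+1}-j_i\geq b+1$ for $1\leq i\leq a-2$, and $j_{a-1}\leq m-b-1$. Now define $R$ to consist of all spine arcs except the $a-1$ removed arcs $(v_{j_i},v_{j_i+1})$, together with the following internal zigzag from each selected gadget: the long dipath $G_i[v_{j_i+1},v_{j_i}]$ if $G_i$ is of type I; the dipath $P_1^{(i)}$ together with the arc $(r^{(i)},v_{j_i+1})$ if $G_i$ is basic type II; and the two dipaths $P_1^{(i)},P_2^{(i)}$ if $G_i$ is of type III.

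A short case analysis over the three gadget types shows that each $G_i$ contributes to $R$ exactly one sink $t_i\in\{v_{j_i},r^{(i)}\}$ and one source $s_{i+1}\in\{v_{j_i+1},r^{(i)}\}$, appearing in this order along the line; together with $s_1:=v_0$ and $t_a:=v_m$, these identify $R$ as an $(a,b)$-alternating-path. The bound $|Q'_i|\geq b$ is immediate from each gadget's defining length (e.g.\ $\geq g-1$ for type I, $\geq 2b^2+b-2$ for basic type II, $\geq 2b-1$ for type III). For $2\leq i\leq a-1$, the forward dipath $Q_i$ is a spine segment possibly prepended by the arc $(r^{(i-1)},v_{j_{i-1}+1})$ or extended into $P_1^{(i)}$, and the worst case (both $G_{i-1}$ and $G_i$ of type I or III) gives $|Q_i|=j_i-j_{i-1}-1\geq b$; the boundary dipaths satisfy $|Q_1|\geq j_1\geq b$ and $|Q_a|\geq m-j_{a-1}-1\geq b$. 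Internal disjointness of the $Q_i$ and $Q'_i$ follows from condition~4 of Definition~\ref{def:chain}. The main technical hurdle is simply this per-type case analysis to verify the source/sink identification and the worst-case length bounds; no deeper structural idea is required.
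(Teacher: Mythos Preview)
Your proof is correct and follows essentially the same approach as the paper: use $a-1$ well-spaced $A_2$-gadgets to supply the backward dipaths $Q'_i$ and the intervening spine segments for the forward dipaths $Q_i$, with a per-type case analysis matching the paper's use of Lemma~\ref{lem:gadget_property_basic}. The only difference is presentational---the paper runs an induction on $a$ (peeling off the last gadget at each step) whereas you unroll this into an explicit selection $\ell_i=i(b+1)$; your stated ``worst case (both $G_{i-1}$ and $G_i$ of type I or III)'' should read ``$G_{i-1}$ of type I or III and $G_i$ of type I or II'', but the bound $|Q_i|\geq j_i-j_{i-1}-1\geq b$ you derive is correct regardless.
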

	\begin{proof}
		The proof is by induction on $a$. 
		In the base case $a = 1$, the condition in the lemma states that $|A_2| \geq b$. This implies that $m = |A(P)| \geq b$, meaning that $P$ is a dipath of length at least $b$ from $v_0$ to $v_m$, and hence also a strong $(1,b)$-alternating-path with $s_1(P) = v_0$ and $t_1(P) = v_m$. 
		
		We now move on to the induction step. So let $a \geq 2$. 
		Let $j$ be the largest integer in the set $\{0,\dots,m-b-1\}$ satisfying $(v_j,v_{j+1}) \in A_2$. Set $\mathcal{C}' := \mathcal{C}[v_0,v_j]$. Then 
		$|A_2(\mathcal{C}')| \geq |A_2(\mathcal{C})| - (b+1) \geq (a-1)(b+1) - 1$. By the induction hypothesis, $\mathcal{C}'$ contains a strong $(a-1,b)$-alternating-path $R'$ with $s_1(R') = v_0$ and $t_{a-1}(R') = v_j$.   
		
		Setting $e := (v_j,v_{j+1})$, suppose first that $G_e$ is either of type I or a basic gadget of type II. By Item 1 of Lemma \ref{lem:gadget_property_basic}, $G_e$ contains a $(2,b)$-alternating path $R_0$ with $s_1(R_0)=t_1(R_0) = v_j$ and $t_2(R_0) = v_{j+1}$.
		Now let $R$ be the $(a,b)$-alternating-path obtained by attaching to $R'$ the dipaths $Q'_1(R_0)$ and $Q_2(R_0) \circ P[v_{j+1},v_m]$. Formally, $R$ is defined by setting 
		$s_i(R) = s_i(R')$ and $t_i(R) = t_i(R')$ for every $1 \leq i \leq a-1$ (so in particular, $s_1(R) = v_0$), $s_a(R) = s_2(R_0)$, $t_a(R) = v_m$, $Q_i(R) = Q_i(R')$ for every $1 \leq i \leq a-1$, $Q'_i(R) = Q'_i(R')$ for every $1 \leq i \leq a-2$, $Q'_{a-1}(R) = Q'_1(R_0)$ and 
		$Q_a(R) = Q_2(R_0) \circ P[v_{j+1},v_m]$. Note that $|Q_a(R)| \geq b$ because $j \leq m-b-1$. 
		It follows that $R$ is indeed a strong $(a,b)$-alternating-path, as required.
		
		Suppose now that $G_e$ is of type III. Then $G_e$ consists of the vertices $v_j,v_{j+1}$, a vertex $r$, and two internally vertex-disjoint dipaths $P_1,P_2$ from $v_j$ and $v_{j+1}$, respectively, to $r$, such that $P_1$ and $P_2$ have length at least $2b-1 \geq b$ each. 
		Now let $R$ be the $(a,b)$-alternating-path obtained by attaching to $R'$ the dipaths 
%		$P_1$ and $P_2 \circ P[v_{j+1},v_m]$. 
		$P_1$, $P_2$ and $P[v_{j+1},v_m]$. 
		Formally, $R$ is defined by setting 
		$s_i(R) = s_i(R')$ for every $1 \leq i \leq a-1$ (so in particular, $s_1(R) = v_0$), $t_i(R) = t_i(R')$ for every $1 \leq i \leq a-2$, $t_{a-1}(R) = r$, $s_a(R) = v_{j+1}$, $t_a(R) = v_m$, $Q_i(R) = Q_i(R')$ and $Q'_i(R) = Q'_i(R')$ for every $1 \leq i \leq a-2$, $Q_{a-1}(R) = Q_{a-1}(R') \circ P_1$, $Q'_{a-1}(R) = P_2$ and 
		$Q_a(R) = P[v_{j+1},v_m]$.  
		Again, it is easy to check that $R$ is a strong $(a,b)$-alternating-path, as required. 
	\end{proof}
%	??? Define concatenation of dipaths (including the case where a dipath is concatenated with an arc). Also, define the notation $|P|$ for the length of a path $P$ and use it in the proof of Lemma \ref{lem:gadget_intersection} (and elsewhere if needed). ??? 

%	\begin{lemma}
%		Let $G,G^*$ be gadgets such that 
%		$V(G) \cap V(G^*) \neq \emptyset$
%%		and $\{p(G),q(G)\} \cap \{p(G^*),q(G^*)\} = \emptyset$, 
%		and such that $G^*$ is of type I,II or III. Then there exists $1 \leq a \leq 3$ such that $G \cup G^*$ contains an $(a,b)$-alternating-path $R$ satisfying the following properties:
%		\begin{enumerate}
%			\item $t_1(R) \in \{p(G),q(G)\}$ and $s_a(R) \in \{p(G^*),q(G^*)\}$.
%			\item $|V(R) \cap \{ p(G),q(G) \}| = 1$ and $|V(R) \cap \{p(G^*),q(G^*)\}| = 1$. 
%		\end{enumerate}
%%		??? Can we really guarantee that $p^* \notin V(R)$ in all cases? In (the first part of) Item 3 of Lemma \ref{lem:gadget_property}, should we add the requirement that the path from $q$ to $x$ avoids $p$? Should we also require that if $t_1(R) = p$ then $q \notin V(R)$? ???. 
%%		\newline 
%%		??? Assume that $\{p(G),q(G)\} \cap \{p(G^*),q(G^*)\} = \emptyset$? Is it enough to require that $p^* \notin V(G)$? ּּ???
%	\end{lemma}

	\begin{lemma}\label{lem:gadget_intersection}
	Let $G,G^*$ be gadgets such that $V(G) \cap V(G^*) \neq \emptyset$, 
$p(G^*),q(G^*) \notin V(G)$, \linebreak and $G^*$ is 
%	either of type I or an extended gadget of type II. 
an extended gadget of type II. Then there exists $1 \leq a \leq 3$ such that $G \cup G^*$ contains an $(a,b)$-alternating-path $R$ with $t_a(R) \in \{p(G),q(G)\}$,
$s_1(R) \in \{p(G^*),q(G^*)\}$ and
$|V(R) \cap \nolinebreak \{ p(G),q(G) \}| = |V(R) \cap \{p(G^*),q(G^*)\}| = 1$. 
	%		??? Can we really guarantee that $p^* \notin V(R)$ in all cases? In (the first part of) Item 3 of Lemma \ref{lem:gadget_property}, should we add the requirement that the path from $q$ to $x$ avoids $p$? Should we also require that if $t_1(R) = p$ then $q \notin V(R)$? ???. 
	%		\newline 
	%		??? Assume that $\{p(G),q(G)\} \cap \{p(G^*),q(G^*)\} = \emptyset$? Is it enough to require that $p^* \notin V(G)$? ּּ???
	\end{lemma}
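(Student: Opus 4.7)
The plan is to pick an arbitrary intersection vertex $x \in V(G) \cap V(G^*)$; the hypothesis gives $x \notin \{p(G^*), q(G^*)\}$. I will split into two cases based on whether $x \in \{p(G), q(G)\}$.

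\textbf{Case A: $x \in \{p(G), q(G)\}$.} Here I apply Item~1 of Lemma~\ref{lem:gadget_property_extended} to the extended type-II gadget $G^*$ and the vertex $x$, which produces an $(a,b)$-alternating-path $R$ in $G^*$ with $t_a(R) = x$, $s_1(R) \in \{p(G^*), q(G^*)\}$, $|V(R) \cap \{p(G^*), q(G^*)\}| = 1$, and $a \in \{1,2\}$. Since $V(R) \subseteq V(G^*)$, the condition $|V(R) \cap \{p(G), q(G)\}| = 1$ reduces to ensuring that the other special vertex $y$ of $G$ is not on $R$. This is automatic when $y \notin V(G^*)$; when $y \in V(G^*)$, I will use the explicit form of the alt-paths supplied in the proof of Lemma~\ref{lem:gadget_property_extended} (depending on whether $x$ and $y$ lie on $P_1^*$ or $P_2^*$) to either retarget from $x$ to $y$, or use the slack $a \leq 3$ to reroute around $y$.

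\textbf{Case B: $x \notin \{p(G), q(G)\}$.} Here my plan is a two-stage construction. First, apply Item~1 of Lemma~\ref{lem:gadget_property_extended} to $G^*$ and $x$ to get an $(a',b)$-alt-path $R' \subseteq G^*$ with $t_{a'}(R') = x$, $s_1(R') \in \{p(G^*), q(G^*)\}$, and $a' \leq 2$. Second, extend $R'$ through $G$ to some $y \in \{p(G), q(G)\}$ via a dipath $D$ in $G$ from $x$ to $y$, which exists by Item~2 of Lemma~\ref{lem:gadget_property_basic} when $G$ is of type I or II. The extension replaces the final segment $Q_{a'}(R')$ by $Q_{a'}(R') \circ D$ and renames $t_{a'}$ as $y$: since $Q_{a}$ is among the unconstrained segments, this keeps $a = a' \leq 2$ while preserving the length requirements on $Q_2, \dots, Q_{a-1}, Q'_1, \dots, Q'_{a-1}$ inherited from $R'$. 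When $G$ is of type~III (where an interior $x$ may fail to reach $\{p(G), q(G)\}$ inside $G$), I will anchor the extension at $r(G)$ and use the long dipaths $P_1(G), P_2(G)$ reversed, adding one further alt-path level and thus yielding $a \leq 3$.

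\textbf{Main obstacle.} The principal technical difficulty is controlling internal vertex-disjointness when splicing $D$ onto the tail of $R'$: the only unavoidable shared vertex is $x$ itself, yet internal vertices of $D \subseteq V(G)$ could in principle also lie in $V(R') \cap V(G)$. I plan to circumvent this by choosing $D$ to be a shortest dipath from $x$ to $\{p(G), q(G)\}$ in $G$, and by short-circuiting any residual overlap through the alt-path structure (upgrading to $a = 3$ if necessary). A parallel disjointness analysis handles the delicate subcase of Case~A in which both $x$ and $y$ lie in $V(G^*)$; this is the most intricate part of the proof and is precisely where the flexibility $a \leq 3$ is needed rather than $a \leq 2$.
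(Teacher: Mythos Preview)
Your high-level strategy---get an alternating path inside $G^*$ via Lemma~\ref{lem:gadget_property_extended}, then extend it through $G$ to reach $\{p(G),q(G)\}$---matches the paper's. But the way you handle the disjointness obstacle does not work, and this is where the real content of the lemma lies.

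In Case~B you fix an arbitrary $x\in V(G)\cap V(G^*)$ first and then take a shortest dipath $D$ from $x$ to $\{p(G),q(G)\}$ inside $G$. That choice controls $|V(D)\cap\{p,q\}|$, but it does \emph{not} keep $D$ from meeting $R'$ again: any other vertex of $V(G)\cap V(G^*)$ could lie on $D$ and on $R'\subseteq G^*$. The paper avoids this (for $G$ trivial/I/II) by reversing the order: it takes a shortest dipath from the \emph{set} $V(G)\cap V(G^*)$ to $\{p,q\}$ and lets $x$ be its first vertex, so that automatically $V(D)\cap V(G^*)=\{x\}$ and hence $V(D)\cap V(R')=\{x\}$. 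Your ``short-circuiting'' fallback is too vague to substitute for this; there is no evident way to repair an overlap between $D$ and an internal $Q'_i(R')$ by merely bumping $a$ to $3$.

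The type-III case is more seriously underestimated. Your plan (``anchor at $r(G)$, reverse $P_1,P_2$, add one level'') is essentially what the paper does in its Case~2.1, but it \emph{requires} $V(R')\cap V(G)=\{x\}$, and for that the paper invokes Item~2 (not Item~1) of Lemma~\ref{lem:gadget_property_extended} with the full set $X=V(G)\cap V(G^*)$, which only applies when $X\subseteq V(P_1^*)\setminus\{p^*,r^*\}$. When $V(G)$ meets $V(P_2^*)$ this fails, and the paper needs an entirely different argument: it first bounds $|V(G)\cap V(P_1^*)|\le 2(b-1)$ via the girth/distance structure of a type-III gadget, then cuts the long path $P_2^*[w,r^*]\circ P_1^*$ at the intersection points and uses an averaging argument (exploiting $|P_1^*|\ge 2b^2+b-2$) to locate a segment of length $\ge b$ touching $V(G)$ only at its start. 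Nothing in your plan anticipates this, and the bound $a\le 3$ alone will not produce it.
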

%	??? Change the notation used for the various paths in the proof of Lemma \ref{lem:gadget_intersection}? ???
	\begin{proof}
	For convenience, let us put $p := p(G)$, $q := q(G)$, $p^* := p(G^*)$ and $q^* := q(G^*)$. The assumption $p^*,q^* \notin V(G)$ will be used implicitly throughout the proof. 
%	Note that if $\{p,q\} \cap \{p^*,q^*\}$ then the assertion of the lemma holds trivially (as one can simply take $R$ to be an empty path from a vertex in $\{p,q\} \cap \{p^*,q^*\}$ to itself), so we will (implicitly) assume throughout the proof that $\{p,q\} \cap \{p^*,q^*\} = \emptyset$. 
	We proceed by a case analysis over the types of $G$ and $G^*$. 
	\paragraph{Case 1.} $G$ is trivial, a gadget of type I, or a gadget of type II. 
	Recall that $V(G) \cap V(G^*) \neq \emptyset$ by assumption. 
	By Item 2 of Lemma \ref{lem:gadget_property_basic}, $\{p,q\}$ is reachable from every vertex of $V(G)$ via a dipath inside $G$ (this is evident if $G$ is trivial). In particular, $G$ contains a dipath from $V(G) \cap V(G^*)$ to $\{p,q\}$. 
%	Fix a shortest dipath $P$ in $G$ which starts in $V(G) \cap V(G^*)$ and ends in $\{p,q\}$. Let $x \in V(G) \cap V(G^*)$ be the first vertex of $P$. 
	Fix a shortest such dipath $P \subseteq G$, and let $x \in V(G) \cap V(G^*)$ be the first vertex of $P$. 
	The minimality of $P$ implies that $V(P) \cap V(G^*) = \{x\}$ and $|V(P) \cap \{p,q\}| = 1$. 
%	Fix a vertex $x \in V(G) \cap V(G^*)$, whose distance to $\{p,q\}$ inside $G$ is smallest among all vertices of $V(G) \cap V(G^*)$. Let $P$ be a shortest dipath from $x$ to $\{p,q\}$ in $G$. Observe that by our choice of $x$, we have $V(P) \cap V(G^*) = \{x\}$ (since otherwise there would be a vertex in $V(P) \cap V(G^*) \subseteq V(G) \cap V(G^*)$ whose distance to $\{p,q\}$ inside $G$ is smaller than that of $x$). The minimality of $P$ also implies that $|V(P) \cap \{p,q\}| = 1$. 
%	Suppose first that $G^*$ is of type I, i.e. a directed cycle. Then $G^*$ contains a dipath $P^*$ from $\{p^*,q^*\}$ to $x$ with $|V(P^*) \cap \{p^*,q^*\}| = 1$. Note that $V(P) \cap V(P^*) = \{x\}$ because $V(P^*) \subseteq V(G^*)$ and $V(P) \cap V(G^*) = \{x\}$. It follows that $R := P^* \circ P$ is a dipath from $\{p^*,q^*\}$ to $\{p,q\}$ satisfying $|V(R) \cap \{ p,q \}| = |V(R) \cap \{p^*,q^*\}| = 1$. In particular, $R$ is a $(1,b)$-alternating-path satisfying the requirements in the lemma. 
%	Suppose now that $G^*$ is an extended gadget of type II.
	By Item 1 of Lemma \ref{lem:gadget_property_extended}, there is $1 \leq a \leq 2$ such that $G^*$ contains an $(a,b)$-alternating-path $R^*$ with $t_a(R^*) = x$, $s_1(R^*) \in \{p^*,q^*\}$ and $|V(R^*) \cap \{p^*,q^*\}| = 1$. (The condition $x \notin \{p^*,q^*\}$ appearing in Item 1 of Lemma \ref{lem:gadget_property_extended} is satisfied here because $x \in V(G) \cap V(G^*)$ whereas $p^*,q^* \notin V(G)$ by assumption.)
	Note that $V(P) \cap V(R^*) = \{x\}$ because 
	$V(P) \cap V(G^*) = \{x\}$ and $V(R^*) \subseteq V(G^*)$. 
	Now it is easy to see that by combining $P$ and $R^*$ we obtain an $(a,b)$-alternating-path $R$ with $t_a(R) \in \{p,q\}$, $s_1(R) \in \{p^*,q^*\}$ and 
	$|V(R) \cap \{ p,q \}| = |V(R) \cap \{p^*,q^*\}| = 1$. Formally, $R$ is defined by setting $Q_a(R) := Q_a(R^*) \circ P$ (so $t_a(R) \in \{p,q\}$ is the last vertex of $P$); $s_i(R) = s_i(R^*)$ for every $1 \leq i \leq a$; and $t_i(R) = t_i(R^*)$, $Q_i(R) = Q_i(R^*)$ and $Q'_i(R) = Q'_i(R^*)$ for every $1 \leq i \leq a-1$. This completes the proof in Case 1.      
	
	\paragraph{Case 2.} $G$ is a gadget of type III. In this case $G$ consists of the arc $(p,q)$, a vertex $r$, and two internally vertex-disjoint dipaths $P_1,P_2$ from $p$ and $q$, respectively, to $r$, such that $P_1$ and $P_2$ have length at least $2b-1$ each. 
	As $G^*$ is an extended gadget of type II, it consists of the vertices $p^*,q^*$, a vertex $r^*$ and dipaths $P^*_1,P^*_2$, all satisfying the properties stated in the definition of a type-II gadget. 
%	We start by handling the case that there is some $x \in V(P^*_1)$ such that $x \in V(G)$ and the distance from $\{p,q\}$ to $x$ in $G$ is at least $b-1$. 
	We start by handling the case that there is some $x \in V(P^*_1) \cap V(G)$ such that the distance from $\{p,q\}$ to $x$ in $G$ is at least $b-1$. 
	Since $V(G) = V(P_1) \cup V(P_2)$, we have either $x \in V(P_1)$ or $x \in V(P_2)$. 
	Suppose without loss of generality that $x \in V(P_1)$ (the case $x \in V(P_2)$ is symmetric). Our assumption on $x$ then means that $|P_1[p,x]| \geq b-1$. 
%	Note that every vertex of $G$ is reachable from $\{p,q\}$ (via either $P_1$ or $P_2$), so fix a shortest dipath $P \subseteq G$ from $\{p,q\}$ to $x$. Then $|V(P) \cap \{p,q\}| = 1$ and $P$ has length at least $b-1$. 
%	Now, by concatenating the dipath $P$ with the arc $(x,q^*) \in A(G^*)$ (which exists by the definition of a type-II gadget), we obtain a dipath $R$ of length at least $b$ from $\{p,q\}$ to $q^*$. 
	Now, $R := P_1[p,x] \circ (x,q^*)$ is a dipath of length at least $b$ from $\{p,q\}$ to $q^*$ (note that $(x,q^*) \in A(G^*)$ by the definition of a type-II gadget).
	Hence, $R$ constitutes a $(2,b)$-alternating-path with $s_1(R) = t_1(R) = q^*$ and $s_2(R) = t_2(R) \in \{p,q\}$.  
	Moreover, $|V(R) \cap \{p,q\}| = 1$ and $V(R) \cap \{p^*,q^*\} = \{q^*\}$ (since $p^* \notin V(G)$), as required.  
	 
	So from now on we assume that every $x \in V(P^*_1) \cap V(G)$ is at distance at most $b-2$ from $\{p,q\}$ in $G$ (in particular, if $b = 1$ then $V(P^*_1) \cap V(G) = \emptyset$). 
	It follows that 
	$|V(G) \cap V(P^*_1)| \leq 2(b-1)$. 
%	\newline 
%	??? What happens if $b-2 < 0$, i.e. if $b = 1$? ???
%	Moving forward, we will consider the intersection of $V(G)$ with $V(P^*_1) \setminus \{r^*\}$, starting with the case $V(G) \cap \left( V(P^*_1) \setminus \{r^*\} \right) = \emptyset$. 
%	Since $V(G) \cap V(G^*) \neq \emptyset$ and $p^*,q^* \notin V(G)$, we must have that $V(G) \cap V(P_2^*) \neq \emptyset$. Now, traverse the dipath $P_2^*$ {\em backwards} (starting from its last vertex, $r^*$) until a first vertex of $V(G)$ is reached, and denote this vertex by $y$. Evidently $V(G) \cap P^*_2[y,r] = \{y\}$.  
%	Moving forward, we will consider the intersection of $V(G)$ with $V(P^*_2)$, starting with the case $V(G) \cap V(P^*_2) = \emptyset$. 
	Moving forward, we will consider two cases, based on the intersection of $V(G)$ with $V(P^*_2)$. 
	\paragraph{Case 2.1.} $V(G) \cap V(P^*_2) = \emptyset$. Set $X := V(G) \cap V(G^*)$, noting that $X \neq \emptyset$ by assumption.
	As $V(G) \cap V(P^*_2) = \emptyset$ and $p^*,q^* \notin V(G)$, we must have that 
	$X \subseteq V(G^*) \setminus (V(P^*_2) \cup \{p^*,q^*\}) = V(P^*_1) \setminus \{p^*,r^*\}$. By Item 2 of Lemma \ref{lem:gadget_property_extended}, there exists $1 \leq a \leq 2$ and an $(a,b)$-alternating-path $R^*$ contained in $G^\ast$, such that $t_a(R^*) \in X$, $s_1(R^*) \in \{p^*,q^*\}$ and $|V(R^*) \cap \{p^*,q^*\}| = |V(R^*) \cap X| = 1$. 
	For convenience, put $x := t_a(R^*)$. Note that $V(R^*) \cap V(G) = \{x\}$ by our choice of $X$ and $R^*$. We now see that if $x \in \{p,q\}$, then $R := R^*$ satisfies the requirements of the lemma. Suppose then that $x \notin \{p,q\}$.   
	Since $x \in V(G)$, we have either $x \in V(P_1)$ or $x \in V(P_2)$. Without loss of generality, we assume that $x \in V(P_1)$ (the case that $x \in V(P_2)$ is symmetric). Recall that by our assumption, $x$ is at distance at most $b-2$ from $\{p,q\}$ in $G$; in other words, 
	the length of the dipath $P_1[p,x]$ is at most $b-2$. As $|P_1| \geq 2b-1 \geq 2b-2$, we get that $|P_1[x,r]| = |P_1| - |P_1[p,x]| \geq b$. Now let $R$ be the $(a+1,b)$-alternating-path obtained by combining $R^*$ with the dipaths $P_1[x,r]$ and $P_2$. Formally, $R$ is defined by setting $s_i(R) = s_i(R^*)$ for every $1 \leq i \leq a$; $t_i(R) = t_i(R^*)$,  $Q_i(R) = Q_i(R^*)$ and $Q'_i(R) = Q'_i(R^*)$ for every $1 \leq \nolinebreak i \leq \nolinebreak a-1$; $t_a(R) = r$; $s_{a+1}(R) = t_{a+1}(R) = q$; $Q_a(R) = Q_a(R^*) \circ P_1[x,r]$; and $Q'_a(R) = P_2$. Note that $R$ is indeed an $(a+1,b)$-alternating-path; this follows from our choice of $R^*$, the fact that $V(R^*) \cap V(G) = \{x\}$, and the bounds $|Q'_{a}(R)| = |P_2| \geq 2b-1 \geq b$ and $|Q_a(R)| \geq |P_1[x,r]| \geq b$. 
	We also have $|V(R) \cap \{p^*,q^*\}| = 1$ (as $V(R) \cap \{p^*,q^*\} = V(R^*) \cap \{p^*,q^*\}$) and $V(R) \cap \{p,q\} = \{q\}$ (by our definition of $R$ and as $x \notin \{p,q\}$).
	Since $a+1 \le 3$, we see that the assertion of the lemma holds in 
%	the case $V(G) \cap V(P^*_2) = \emptyset$. 
	Case 2.1.
%	It remains to handle the case $V(G) \cap V(P^*_2) \neq \emptyset$. 
	
	\paragraph{Case 2.2.} $V(G) \cap V(P^*_2) \neq \emptyset$. In this case, we traverse the dipath $P^*_2$ {\em backwards} (i.e., starting from its last vertex, $r^*$), until the first time a vertex of $V(G)$ is reached, and denote this vertex by $w$. Evidently, $V(G) \cap V(P^*_2[w,r^*]) = \{w\}$. For convenience, let us set $P^* := P_2^*[w,r^*] \circ P^*_1$, noting that $P^*$ starts at $w$, ends at $p^*$, and has length at least $|P^*_1| \geq 2b^2 + b - 2$ (by the definition of a type-II gadget). For every $u \in V(G) \cap (V(P^*) \setminus \{w\})$, denote by $e_u$ the (unique) arc of $P^*$ whose head is $u$. Let $R_1,\dots,R_m$ be the connected components of the digraph obtained from $P^*$ by deleting the arc $e_u$ for every $u \in V(G) \cap (V(P^*) \setminus \{w\})$ (this digraph is a dipath forest). Then for each $1 \leq i \leq m$, $R_i$ is a dipath whose first vertex is in $V(G)$ and all of whose other vertices are not in $V(G)$. Recall that by our assumption, $|V(G) \cap V(P^*_1)| \leq 2(b-1)$. Now, our choice of $w$ implies that $V(G) \cap V(P^*) = (V(G) \cap V(P^*_1)) \cup \{w\}$. 
%	Hence, 
%	$|V(G) \cap V(P^*) | \leq 2(b-1) + 1 = 2b-1$. Furthermore, we have $m \leq 2b-1$, since the number of arcs we deleted from $P^*$ (to obtain $R_1,\dots,R_m$) is at most $|V(G) \cap V(P^*_1)| \leq 2(b-1)$. 
	The number of edges we deleted from $P^*$ to obtain $R_1,\dots,R_m$ is, one the one hand, equal to $m-1$, and on the other hand equal to $|V(G) \cap (V(P^*) \setminus \{w\})| \leq |V(G) \cap V(P^*_1)| \leq 2(b-1)$.  
	It follows that $m \leq 2(b-1)+1 = 2b-1$ and 
	$|R_1| + \dots + |R_m| \geq |P^*| - 2(b-1) \geq 2b^2 + b - 2 - 2(b-1) = 2b^2 - b$. By averaging, there is some $1 \leq i \leq m$ such that $|R_i| \geq \frac{2b^2 - b}{m} \geq 
	\frac{2b^2 - b}{2b-1} \geq b$. 
	
	Let $u$ (resp. $v$) be the first (resp. last) vertex of $R_i$. Note that $v \in V(P^*_1)$ due to our choice of $w$. Define a dipath $R^*$ as follows: if $v = p^*$ then set $R^* := R_i$, and otherwise set $R^* := R_i \circ (v,q^*)$. (That $(v,q^*) \in A(G^*)$ follows from the definition of a type-II gadget and the fact that $v \in V(P^*_1)$.) Then $|V(R^*) \cap \{p^*,q^*\}| = 1$ and $V(G) \cap V(R^*) = \{u\}$ (because $V(G) \cap V(R_i) = \{u\}$ and $q^* \notin V(G)$). 
	In particular, 
	$u \in V(G) = V(P_1) \cup V(P_2)$. 
%	As mentioned above, we have \linebreak$(V(R_i) \setminus \{u\}) \cap V(G) = \emptyset$ and $u \in V(G) = V(P_1) \cup V(P_2)$.  
	Suppose, without loss of generality, that $u \in V(P_1)$ (the case $u \in V(P_2)$ is symmetric). 
	Now define a $(2,b)$-alternating-path $R$ as follows: 
	$s_1(R) = t_1(R) = q^*$, 
	$s_2(R) = t_2(R) = p$,  
	$Q'_1(R) = P_1[p,u] \circ R^*$. 
	Note that $Q'_1(R)$ is indeed a dipath (because $V(P_1) \cap V(R^*) \subseteq V(G) \cap V(R^*) = \{u\}$), and that $|Q'_1(R)| \geq |V(R^*)| \geq |V(R_i)| \geq b$.
%	It is also easy to see that $q,p^* \notin V(R)$. 
	Furthermore, $q \notin V(R)$ and $V(R) \cap \{p^*,q^*\} = V(R^*) \cap \{p^*,q^*\}$.  
	Thus, $R$ satisfies the requirements of the lemma. This completes the proof.  
\end{proof}

%We note that Lemma \ref{lem:gadget_intersection} is also true without the condition that $p(G^*),q(G^*) \notin V(G)$, although this requires a slightly longer case analysis. 
\noindent
Note that the gadget $G$ in Lemma \ref{lem:gadget_intersection} is allowed to be trivial.

In the following lemma we show that if a sufficiently ``rich" chain (i.e., a chain $\mathcal{C}$ with $|A_2(\mathcal{C})|$ large enough) ``self-intersects" in some well-defined way, then it contains a subdivision of $C_{a,b}$.
Roughly speaking, this can be thought of as closing the alternating-path obtained from Lemma \ref{lem:chain_anti_dir_path} to form a cycle (i.e., a $C_{a,b}$-subdivision). 
%In some cases, this closure is achieved by using Lemma \ref{lem:gadget_intersection}. 
The purpose of Lemma \ref{lem:gadget_intersection} is to achieve this closure. 
\begin{lemma}\label{lem:chain_main}
	Let $a \geq 2$ and $b \geq 1$ be integers, let $\mathcal{C}$ be a chain contained in a digraph $D$ and let $P = z_0,\dots,z_{\ell}$ and $A_1,A_2$ be as in Definition \ref{def:chain}. Suppose that $|A_2| \geq (a+3)(b+1) - 2$ and that at least one of the following two conditions is satisfied:
	\begin{enumerate}
%		\item $(v_m,v_0) \in A(D)$;
%		\item $(v_0,v_1) \in A_2$ and there is $x \in G_{(v_0,v_1)}$ such that $(v_m,x) \in A(D)$. 
		\item There exists $x \in V(G_{(z_0,z_1)})$ such that $(z_{\ell},x) \in A(D)$. 
%		\item There exists a vertex $z^* \in V(D) \setminus V(\mathcal{C})$ such that $(z_{\ell},z^*) \in A(D)$, and there exists a type-I or extended type-II gadget $G^*$ such that $p(G^*) = z_{\ell}$, $q(G^*) = z^*$,  $V(G_{(z_0,z_1)}) \cap \nolinebreak V(G^*) \neq \emptyset$ and 
%		$V(\mathcal{C}) \cap V(G^*) \subseteq V(G_{(z_0,z_1)}) \cup \{z_{\ell}\}$. 
		\item There exists a vertex $z^* \in V(D) \setminus V(\mathcal{C})$ such that $(z_{\ell},z^*) \in A(D)$, and there exists an extended type-II gadget $G^*$ such that $p(G^*) = z_{\ell}$, $q(G^*) = z^*$,  $V(G_{(z_0,z_1)}) \cap \nolinebreak V(G^*) \neq \emptyset$ and 
		$V(\mathcal{C}) \cap V(G^*) \subseteq V(G_{(z_0,z_1)}) \cup \{z_{\ell}\}$. 
	\end{enumerate}
	Then $D$ contains a subdivision of $C_{a,b}$.   
\end{lemma}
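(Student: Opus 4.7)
The plan is to reduce to Observation~\ref{obs:oriented_cycle} by building two strong alternating paths $R_1,R_2$ that share exactly two spine vertices, so that their union spans a subdivision of $C_{a_1+a_2-2,b}=C_{a,b}$. I would split the spine at two indices $0<k<j<\ell$ (concretely $k:=b+1$ and $j:=\ell-b$) and use the middle sub-chain $\mathcal{C}[z_k,z_j]$ to build $R_1$, while the ``outer'' parts of the chain together with the closing structure build $R_2$. Items~3 and~4 of Definition~\ref{def:chain}, together with the explicit assumption $V(\mathcal{C})\cap V(G^*)\subseteq V(G_{(z_0,z_1)})\cup\{z_\ell\}$ in Case~2, guarantee that whenever $R_2$ uses only the spine arcs of $P[z_j,z_\ell]$ and $P[v,z_k]$ plus vertices of $G_{(z_0,z_1)}$ and $G^*$, it automatically shares with $R_1$ only the two endpoints $z_k,z_j$. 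The choice of $k$ and $j$ is made so that the two padding spine pieces have length $\geq b$, while $\mathcal{C}[z_k,z_j]$ still contains at least $a_1(b+1)-1$ arcs of $A_2$ for the required $a_1$.

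For $R_1$, I would feed $\mathcal{C}[z_k,z_j]$ into Lemma~\ref{lem:chain_anti_dir_path}, obtaining a strong $(a_1,b)$-alternating path from $z_k$ to $z_j$. For $R_2$, I would take
\[
R_2 := P[z_j,z_\ell]\,\circ\,(\text{closing route})\,\circ\,P[v,z_k]
\]
for a suitable $v\in\{z_0,z_1\}$, with $a_1$ then chosen so that $a_1+a_2-2=a$. In Case~1, with arc $(z_\ell,x)$ and $x\in V(G_{(z_0,z_1)})$, the closing route is $(z_\ell,x)$ followed by a dipath from $x$ to $v$ inside the gadget, given by Item~2 of Lemma~\ref{lem:gadget_property_basic} when $G_{(z_0,z_1)}$ is trivial, of type~I, or a basic type~II gadget, yielding $a_2=1$. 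If $G_{(z_0,z_1)}$ is of type~III with dipaths $P_1\colon z_0\to r$ and $P_2\colon z_1\to r$ and $x$ is internal to $P_1$ (the case $x\in V(P_2)$ being symmetric), the closing route becomes a $(2,b)$-alternating piece built from either the long half of $P_1$ into $r$ together with $P_2$ (when $|P_1[x,r]|\geq b-1$) or the long half $P_1[z_0,x]$ together with the arc $(z_0,z_1)$ (when $|P_1[z_0,x]|\geq b+1$), yielding $a_2=2$. In Case~2 I would apply Lemma~\ref{lem:gadget_intersection} to $G:=G_{(z_0,z_1)}$ and $G^*$ (whose hypotheses $p(G^*),q(G^*)\notin V(G)$ hold because $\ell\geq 2$ and $z^*\notin V(\mathcal{C})$) to obtain an $(a',b)$-alternating path $R'$ with $a'\leq 3$ from $\{z_\ell,z^*\}$ to $\{z_0,z_1\}$ inside $G\cup G^*$; prepending the arc $(z_\ell,z^*)$ if $s_1(R')=z^*$ and then sandwiching $R'$ between $P[z_j,z_\ell]$ and $P[v,z_k]$ gives $R_2$ with $a_2=a'\leq 3$.

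The main obstacle is ensuring that $R_2$ comes out \emph{strong}: the closing routes can deliver endpoint dipaths $Q_1(R_2)$ or $Q_{a_2}(R_2)$ as short as a single arc (the arcs $(z_\ell,x)$ or $(z_\ell,z^*)$) or even of length zero in certain sub-cases of Lemma~\ref{lem:gadget_intersection} (where $s_1(R')=t_1(R')$). All remaining length must come from the reserved spine segments $P[z_j,z_\ell]$ and $P[v,z_k]$ of length $\geq b$; verifying case-by-case that this padding raises both endpoint dipaths to length $\geq b$, while the gadget interiors visited by the closing route remain disjoint from the spine and from $\mathcal{C}[z_k,z_j]$, is where most of the bookkeeping goes. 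A final application of Observation~\ref{obs:oriented_cycle} with $a_1+a_2-2=a$ then produces the desired $C_{a,b}$-subdivision. The specific constant $(a+3)(b+1)-2$ is dictated by this accounting: three extra blocks of size $b+1$ are needed to accommodate up to $a_2=3$ alternation levels from the closing structure and the two length-$\geq b$ spine padding blocks, while still leaving enough arcs of $A_2$ inside $\mathcal{C}[z_k,z_j]$ for Lemma~\ref{lem:chain_anti_dir_path}.
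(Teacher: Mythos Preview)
Your proposal is correct and follows essentially the same route as the paper: split the spine at $z_{b+1}$ and $z_{\ell-b}$, use Lemma~\ref{lem:chain_anti_dir_path} on the middle sub-chain, build the outer alternating path from the closing structure padded by the two reserved spine segments (so that it becomes strong), and finish with Observation~\ref{obs:oriented_cycle}. Your case split for the type-III gadget in Condition~1 is unnecessary, though: the construction ``$(z_\ell,x)\circ P_1[x,r]$ together with $P_2$'' always works regardless of $|P_1[x,r]|$, since after padding with $P[z_{\ell-b},z_\ell]$ the first block already has length at least $b+1$, and only $Q'_1=P_2$ needs length $\geq b$ intrinsically.
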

\begin{proof}
	%		Let $\mathcal{C}' := \mathcal{C}[v_0,v_{m-b}]$ be the subchain of $\mathcal{C}$ with spine $P[v_0,v_{t-b}]$. 
	For convenience, put $G := G_{(z_0,z_1)}$. 
	We start by showing that for some $1 \leq a_1 \leq 3$, $G \cup G^*$ contains an $(a_1,b)$-alternating-path $R^*$ satisfying
	$t_{a_1}(R^*) \in \{z_0,z_1\}$,
	$s_1(R^*) \in \{z_{\ell},z^*\}$ and 
	$|V(R^*) \cap \{ z_0,z_1 \}| = |V(R^*) \cap \{z_{\ell},z^*\}| = 1$. 
	If Condition 2 in the lemma holds, then this assertion follows immediately from Lemma \ref{lem:gadget_intersection}. Note that the conditions of Lemma \ref{lem:gadget_intersection} are indeed satisfied in our setting: we have $V(G) \cap V(G^*) \neq \emptyset$ and $z^* \notin V(G)$ by assumption, and $z_{\ell} \notin V(G)$ by the definition of a chain and as $m \geq |A_2| \geq 2$.
	
	Suppose now that Condition 1 in the lemma holds. Let $x \in V(G)$ be such that $(z_{\ell},x) \in A(D)$. If $x \in \{z_0,z_1\}$ then the arc $(z_{\ell},x)$ itself constitutes a $(1,b)$-alternating-path $R^*$ with the required properties. Suppose from now on that $x \notin \{z_0,z_1\}$. 
%	So in particular, $(z_0,z_1) \in A_2$ (i.e., $G$ is not a trivial gadget). 
	So in particular, $G$ is not a trivial gadget. 
	Assume first that $G$ is of type I or II. By Item 2 of Lemma \ref{lem:gadget_property_basic}, $\{z_0,z_1\}$ is reachable from $x$ inside $G$. Fix a shortest path $P_0$ from $x$ to $\{z_0,z_1\}$ contained in $G$. 
	Then $|V(P_0) \cap \{z_0,z_1\}| = 1$. Now $R^* := (z_{\ell},x) \circ P_0$ is a dipath from $z_{\ell}$ to $\{z_0,z_1\}$, and hence also a $(1,b)$-alternating-path with $t_1(R^*) \in \{z_0,z_1\}$ and $s_1(R^*) = z_{\ell}$, as required. Assume now that $G$ is of type III.
	Let $r \in V(G)$ and $P_1,P_2 \subseteq V(G)$ be as in the definition of a type-III gadget (so $P_1,P_2$ are dipaths from $z_0,z_1$, respectively, to $r$, each having length at least $2b-1$). Suppose without loss of generality that $x \in V(P_1)$ (the case that $x \in V(P_2)$ is symmetric). Now define a $(2,b)$-alternating-path $R^*$ by setting $s_1(R^*) = z_{\ell}$, $t_1(R^*) = r$, $s_2(R^*) = t_2(R^*) = z_1$, $Q_1(R^*) = (z_{\ell},x) \circ P_1[x,r]$ and $Q'_1(R^*) = P_2$, noting that $|Q'_1(R^*)| = |P_2| \geq 2b-1 \geq b$ by the definition of a type-III gadget. Note that $z_0 \notin V(R^*)$ because $x \notin \{z_0,z_1\}$ by assumption. Thus, $R^*$ satisfies our requirements. 
	
	We have thus shown that for some $1 \leq a_1 \leq 3$, $G \cup G^*$ contains an $(a_1,b)$-alternating-path $R^*$ satisfying
	$t_{a_1}(R^*) \in \{z_0,z_1\}$,
	$s_1(R^*) \in \{z_{\ell},z^*\}$ and 
	$|V(R^*) \cap \{ z_0,z_1 \}| = |V(R^*) \cap \nolinebreak \{z_{\ell},z^*\}| = 1$. Let $R_1$ be the $(a_1,b)$-alternating-path obtained by combining $R^*$ with the dipaths $P[t_{a_1}(R^*),z_{b+1}]$ and $(P\circ (z_\ell,z^*))[z_{\ell-b},s_1(R^*)]$. Formally, we set $s_1(R_1) = z_{\ell-b}$; $t_{a_1}(R_1) = z_{b+1}$; $Q_1(R_1) = \linebreak (P\circ (z_\ell,z^*))[z_{\ell-b},s_1(R^*)] \circ Q_1(R^*)$; $Q_{a_1}(R_1) = Q_{a_1}(R^*) \circ P[t_{a_1}(R^*),z_{b+1}]$; 
	$s_i(R_1) = s_i(R^*)$ for each $2 \leq i \leq a_1$; and $t_i(R_1) = t_i(R^*)$, $Q_i(R_1) = Q_i(R^*)$ and $Q'_i(R_1) = Q'_i(R^*)$ for each $1 \leq i \leq a_1-1$. Note that $R_1$ is strong, i.e., that $Q_1(R_1)$ and $Q_{a_1}(R_1)$ have length at least $b$ \nolinebreak each. 
	
	Put $a_2 := a + 2 - a_1$. Then $1 \leq a_2 \leq a + 1$ because $a \geq 2$ and $1 \leq a_1 \leq 3$. 
	Now set $\mathcal{C}' := \mathcal{C}[z_{b+1},z_{\ell-b}]$, noting that 
	$|A_2(\mathcal{C}')| \geq |A_2(\mathcal{C})| - (2b+1) \geq (a+1)(b+1) - 1 \geq a_2(b+1) - 1$. By Lemma \ref{lem:chain_anti_dir_path}, applied with parameter $a_2$, the chain $\mathcal{C}'$ contains a strong $(a_2,b)$-alternating-path $R_2$ with $s_1(R_2) = z_{b+1} = t_{a_1}(R_1)$ and $t_{a_2}(R_2) = z_{\ell-b} = s_1(R_1)$. Note that 
	$V(R_1) \cap V(R_2) = \{z_{b+1},z_{\ell-b}\}$ because 
	$V(R_1) \subseteq V(G) \cup V(G^*) \cup \{z_0,\dots,z_{b+1}\} \cup \{z_{\ell-b},\dots,z_{\ell}\}$, $V(R_2) \subseteq V(\mathcal{C}') = V(\mathcal{C}[z_{b+1},z_{\ell-b}])$ and 
	$V(\mathcal{C}) \cap V(G^*) \subseteq V(G) \cup \{z_{\ell}\}$, and by the definition of a chain (see Items 3-4 in Definition \ref{def:chain}). By Observation \ref{obs:oriented_cycle}, $R_1 \cup R_2$ spans a subdivision of $C_{a,b}$, as required. 
\end{proof}
	
	\subsection{Embedding Gadgets}
	In this section we prove two lemmas, each asserting that one can find certain gadgets in digraphs $D$ possessing some suitable properties. Recall that $g$ is chosen as $g = 4b^2$. 
	\begin{lemma}\label{lem:I_II_gadget_embedding}
		Let $D$ be a digraph of directed girth at least $g$, and assume that for every $(x,y) \in A(D)$, either $D$ contains a directed cycle of length exactly $g$ through $(x,y)$, or there is $z \in V(D) \setminus \{x,y\}$ such that $(z,x),(z,y) \in A(D)$. Then for every $(p,q) \in A(D)$, there is a type I or extended type-II gadget $G$ contained in $D$ such that $p(G) = p$, $q(G) = q$ and 
		$|V(G)| \leq 2g$.  
	\end{lemma}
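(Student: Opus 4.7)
My plan is to iteratively apply the hypothesis to arcs of the form $(z,q)$ with $z\in N^-(q)$, building the basic type-II dipath $P_1$ step by step; once $P_1$ is long enough, I apply the hypothesis one more time to an arc of $P_1$ itself to produce the auxiliary dipath $P_2$. If the $(z,q)$-iteration is interrupted by an unavoidable $g$-cycle, I extract a short cycle through $(p,q)$ as a type-I gadget instead.

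First, apply the hypothesis to $(p,q)$. If a $g$-cycle through $(p,q)$ exists, it is itself a type-I gadget on $g\le 2g$ vertices. Otherwise the hypothesis gives $z_1\in V(D)\setminus\{p,q\}$ with $(z_1,p),(z_1,q)\in A(D)$, and I initialize $P_1:=z_1,p$. At stage $t\ge 1$, with $P_1=z_t,z_{t-1},\ldots,z_1,p$ consisting of distinct vertices all in $N^-(q)\cup\{p\}$, I apply the hypothesis to $(z_t,q)$. If a $g$-cycle $C_t\colon z_t\to q\to w_1\to\cdots\to w_{g-2}\to z_t$ arises, I form the closed walk
\[
 p\to q\to w_1\to\cdots\to w_{g-2}\to z_t\to z_{t-1}\to\cdots\to z_1\to p
\]
of length $g+t$ through $(p,q)$, and prune repetitions (each removed sub-cycle has length at least $g$ by the girth hypothesis and does not contain the arc $(p,q)$) to extract a simple cycle through $(p,q)$ of length at most $g+t$; provided $t\le g$, this is a type-I gadget on at most $2g$ vertices. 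Otherwise the hypothesis yields a common in-neighbor $z_{t+1}$ of $z_t$ and $q$. For $t\le g-2$, any coincidence $z_{t+1}=z_j$ (with the convention $z_0:=p$) would produce a cycle $z_j\to z_t\to\cdots\to z_j$ of length $t-j+1\le t+1<g$, contradicting the girth; hence $z_{t+1}$ is fresh and I prepend it to $P_1$.

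I continue until $|P_1|=T+1$, where $T:=2b^2+b-2$; since $T+1\le g-1$ for all $b\ge 1$, the iteration reaches this length unless it produces a type-I gadget first. Set $r:=z_{T+1}$; this $P_1$ is the basic type-II part. To construct $P_2$, I apply the hypothesis to the first arc $(z_{T+1},z_T)$ of $P_1$. If this yields a $g$-cycle $C'\colon z_{T+1}\to z_T\to y_1\to\cdots\to y_{g-2}\to z_{T+1}$, I let $i^*$ be the largest index in $\{0,1,\ldots,g-2\}$ with $y_{i^*}\in V(P_1)\cup\{q\}$ (using $i^*:=0$ and $y_0:=z_T$ when no overlap occurs) and set $P_2:=y_{i^*+1},\ldots,y_{g-2},z_{T+1}$. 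Then $P_2$ ends at $r$, $V(P_1)\cap V(P_2)=\{r\}$, $q\notin V(P_2)$, and the arc $(y_{i^*},y_{i^*+1})$ of $C'$ satisfies condition (b) of an extended type-II gadget since $y_{i^*}\in V(P_1)\setminus\{r\}$. The common-in-neighbor case is handled similarly: short-cycle arguments force the in-neighbor $w$ of $z_{T+1},z_T$ to lie outside $V(P_1)\cup\{q\}$, and a short recursive extension using $w$ completes $P_2$. In all cases $|V(G)|\le (T+2)+(g-1)+1\le 2g$.

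The main obstacle is guaranteeing $|P_2|\ge b$ in the $g$-cycle branch above: a late overlap $y_{i^*}=z_{j^*}$ could otherwise leave too short a suffix of $C'$. I rule this out by observing that any overlap $y_i=z_j$ produces the cycle
\[
 z_j\to y_{i+1}\to\cdots\to y_{g-2}\to z_{T+1}\to z_T\to\cdots\to z_{j+1}\to z_j
\]
of length $g+T-i-j$ in $D$, which must be at least $g$ by the girth hypothesis, forcing $i+j\le T$. Consequently $i^*\le T$, so $|P_2|=g-2-i^*\ge g-2-T=2b^2-b\ge b$ for $b\ge 1$. The remaining subcase $q\in V(C')$ is handled separately: a passage through $q$ in $C'$ combined with $(p,q)$ and portions of $P_1$ directly yields a simple cycle through $(p,q)$ of length at most $g+T+1\le 2g$, again a type-I gadget. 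All of these estimates rely on the relation $g=4b^2$ and on $T=2b^2+b-2$ together with $\dgirth(D)\ge g$.
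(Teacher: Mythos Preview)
Your overall strategy coincides with the paper's: build $P_1$ by repeatedly applying the hypothesis to arcs $(\cdot,q)$, then launch a second process anchored at the arc $(r,z_T)$ to manufacture $P_2$. The construction of $P_1$ and the handling of an interrupting $g$-cycle there are fine, and your $g$-cycle analysis for $P_2$ when the cycle goes through $(z_{T+1},z_T)$ is correct (your girth inequality $i+j\le T$ is exactly the right mechanism).

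The genuine gap is the sentence ``the common-in-neighbor case is handled similarly: \dots a short recursive extension using $w$ completes $P_2$.'' This is not a one-line matter. If you recurse, applying the hypothesis to $(w_1,z_T)$, then to $(w_2,z_T)$, etc., you are building the paper's second chain $w_0=r,w_1,\dots$; but this recursion can itself be interrupted by a $g$-cycle $C$ through $(w_{i-1},z_T)$ at some step $1\le i\le b$. For $i\ge 2$, the vertex $r=z_{T+1}$ need not lie on $C$, so your suffix-of-the-cycle construction of $P_2$ no longer applies directly: you have to splice a segment of $C$ onto the partial chain $w_{i-1},\dots,w_0=r$. Doing so requires locating where $C$ first re-enters the set $\{w_{i-2},\dots,w_0\}\cup V(P_1)\cup\{q\}$ and then \emph{ruling out} that this first hit is some $w_j$ (via a short-cycle contradiction), handling the possibility that it is $q$ (yielding a type-I gadget), and only then defining $P_2$ and checking $|P_2|\ge b$ with a girth inequality analogous to yours. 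None of this is difficult, but it is several non-automatic steps, and ``handled similarly'' does not cover it. Filling this in would make your argument essentially identical to the paper's.
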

	\begin{proof}
%		Recall that $g$ is chosen as $g = 4b^2$. 
		Let $(p,q) \in A(D)$. We inductively define a sequence of vertices $r_i$, $i \geq 0$, with the property that $(r_i,q) \in A(D)$ for every $i \geq 0$ and $(r_i,r_{i-1}) \in A(D)$ for every $i \geq 1$. 
		Set $r_0 := p$. Let $i \geq 1$, and suppose we have already defined $r_0,\dots,r_{i-1}$. By assumption, either $D$ contains a directed cycle $C$ of length exactly $g$ through $(r_{i-1},q)$, or there is $z \in V(D) \setminus \{r_{i-1},q\}$ such that $(z,r_{i-1}),(z,q) \in A(D)$. In the latter case, we set $r_i := z$. In the former case, we stop, noting that $r_{i-1},\dots,r_1,r_0=p,q,C[q,r_{i-1}]$ is a closed directed walk of length $i + (|C| - 1) = g + i - 1$ containing the arc $(p,q)$. It follows that if we stop at step $i$, then there is a directed cycle of length at most $g+i-1$ containing $(p,q)$. Therefore, if the process stopped at step $i$ for some $0 \leq i \leq 2b^2 + b - 2$, then $D$ must contain a directed cycle of length at most $g + 2b^2 + b - 3 \leq 2g$ through $(p,q)$. Moreover, this cycle must have length at least $g$ since the directed girth of $D$ is at least $g$. So we see that in this case, $D$ contains a gadget $G$ of type I with $p(G) = p$, $q(G) = q$ and $|V(G)| \leq 2g$, as required.  
		
		Suppose then that the process carried through to step $2b^2 + b - 2$ (inclusive), and let \linebreak
		$r_0 = p, r_1,\dots,r_{2b^2 + b - 2}$ be the vertices produced by the process. Recall that $r_{2b^2+b-2},\dots,r_1,p$ is a directed walk in $D$, all of whose vertices have an arc to $q$. 
%		Note that $r_0,\dots,r_{2b^2}$ are pairwise-distinct. Indeed, if $r_i = r_j$ for some $0 \leq i < j \leq 2b^2$, then $r_i,r_{i+1},\dots,r_j = r_i$ is a closed walk of length at most $2b^2 < g$, in contradiction to the assumption that 
%		$\overrightarrow{\text{girth}}(D) \geq g$
%%		the directed girth of $D$ is at least $g$ 
%		(here we used our choice of $g$).  
%		Setting $P_1 := r_{2b^2},r_{2b^2-1},\dots,r_1,r_0 = p$, we note that $P_1$ is a dipath of length at least $2b^2$ and that every vertex of $P_1$ has an arc to $q$. Thus, $p,q,P_1$ form a basic gadget of type II. We now show how to extend this gadget. 
		Let $r := r_{2b^2+b-2}$ and $u := r_{2b^2+b-3}$ denote the first and second vertex of this dipath, respectively. We now inductively define a sequence of vertices $w_i$, $i \geq 0$, with the property that $(w_i,u) \in A(D)$ for every $i \geq 0$ and $(w_i,w_{i-1}) \in A(D)$ for every $i \geq 1$. 
		Set $w_0 := r$. Let $i \geq 1$, and suppose we have already defined $w_0,\dots,w_{i-1}$. By assumption, either $D$ contains a directed cycle $C$ of length exactly $g$ through $(w_{i-1},u)$, or there is $z \in V(D) \setminus \{w_{i-1},u\}$ such that $(z,w_{i-1}),(z,u) \in A(D)$. In the latter case, we set $w_i := z$. In the former case, we stop and output the directed cycle $C$. 
		
		Suppose first that the process carried through to step $b$, and let $w_0 = r, w_1,\dots,w_b$ be the vertices produced by the process. Note that 
		$$
		P := (w_b,w_{b-1},\dots,w_0 = r = r_{2b^2+b-2},r_{2b^2+b-3},\dots,r_1,r_0=p,q)
		$$ 
		is a directed walk of length at most $2b^2+2b-1$ in $D$.
%		every two consecutive vertices of which are distinct. 
		Since $\dgirth(D) \geq g = 4b^2 > 2b^2+2b-1$, the vertices of $P$ must be pairwise distinct. Now set $P_1 := (r = r_{2b^2+b-2},\dots,r_1,r_0 = p)$ and $P_2 := (w_b,\dots,w_1,w_0 = r)$,  and observe that $P_1$ and $P_2$ satisfy all the requirements in the definition of an extended type-II gadget (note that there is an arc from the first vertex of $P_2$, namely $w_b$, to the second vertex of $P_1$, namely $u$, by our choice of the vertices $w_i$, $i \geq 0$). Moreover, the resulting gadget has $2b^2+2b \leq 2g$ vertices, as required.  
		
		Suppose now that the process stopped at step $i$ for some $1 \leq i \leq b$, and let $C$ be the outputted directed cycle of length (exactly) $g$ through the arc $(w_{i-1},u)$. As before, the 
		$2b^2 + b + i - 1$ vertices $w_{i-1},\dots,w_0 = r = r_{2b^2+b-2},\dots,r_1,r_0 = p,q$ are pairwise distinct because $\dgirth(D) \geq g = 4b^2 > 2b^2 + b + i - 1$ (as $i \leq b$). 
		Traverse the directed cycle $C$ {\em backwards}, starting from $w_{i-1}$, until the first time a vertex 
		$v \in V := \{w_{i-2},\dots,w_0=r_{2b^2+b-2},\dots,r_1,p,q\}$ is hit (this will surely happen because $u = r_{2b^2+b-3} \in V(C) \cap V$). By our choice of $v$ we have $V(C[v,w_{i-1}]) \cap V = \{v\}$. We now rule out the possibility that $v \in \{w_0,\dots,w_{i-2}\}$. To this end, suppose by contradiction that $v = w_j$ for some $0 \leq j \leq i-2$. Since $w_{i-1},w_{i-2},\dots,w_j = v,C[v,w_{i-1}],w_{i-1}$ is a directed cycle and $\dgirth(D) \geq g$, it must be the case that
		$|C[v,w_{i-1}]| \geq g - (i - 1 - j) \geq g - b + 1$. Now, as $C$ consists of the arc $(w_{i-1},u)$ and the dipaths $C[v,w_{i-1}]$ and $C[u,v]$, we have 
		$|C[u,v]| = |C| - 1 - |C[v,w_{i-1}]| = g - 1 - |C[v,w_{i-1}]| \leq b - 2$. Finally, we get that 
%		$v=w_j,w_{j-1},\dots,w_0 = r,u,C[u,v],v$ 
		$v=w_j,(w_j,u),u,C[u,v],v = w_j$ 
		is a (non-trivial) directed closed walk of length at most $b-1 < g$, a contradiction. 
		
		We have thus shown that $v \notin \{w_0,\dots,w_{i-2}\}$. If $v = q$ then $w_{i-1},u = r_{2b^2+b-3},\dots,r_1,r_0=p,q = v,C[v,w_{i-1}]$ is a directed cycle which goes through the arc $(p,q)$ and has length at most $g + 2b^2 + b - 1 \leq 2g$ and at least $g$. Hence, in this case $D$ contains a gadget of type I with the required properties.
		It remains to handle the case that $v \in \{u = r_{2b^2+b-3},\dots,r_1,r_0 = p\}$. In this case, let $s$ be the second vertex of $C[v,w_{i-1}]$ (so in particular, $(v,s) \in A(D)$). Now define the dipaths $P_1 := (r_{2b^2+b-2} = r,\dots,r_1,r_0 = p)$
		and $P_2 := C[s,w_{i-1}] \circ (w_{i-1},\dots,w_1,w_0 = r)$. 
		We claim that $|P_2| \geq b$. Indeed, since $(v,s) \circ P_2 \circ P_1[r,v]$ is a directed cycle in $D$ and $\dgirth(D) \geq g$, it must be the case that $|P_2| \geq g - 1 - |P_1| = g - 1 - (2b^2 + b - 2) \geq b$. 
%		We claim that $P_1$ and $P_2$ satisfy all requirements in the definition of an extended type-II gadget. The only requirement which has not yet been established is that $|P_2| \geq b$. 
		We now see that all of the requirements in the definition of an extended type-II gadget are met (note that the vertex $v \in V(P_1) \setminus \{r\}$ has an arc to the first vertex of $P_2$, namely $s$). Finally, observe that the resulting type-II gadget has 
		$2b^2 + b + (i-1) + |C[s,w_{i-1}]| \leq 2b^2 + 2b + g \leq 2g$ vertices, as required.    
	\end{proof}

	\begin{lemma}\label{lem:III_gadget_embedding}
	Let $b,h,d \geq 1$ be integers, let $D'$ be a digraph and let $v \in V(D')$. Suppose that the following two conditions hold.
	\begin{enumerate}
%		\item Every vertex of $D$ reachable from $v$ has out-degree at least $\delta$.
		\item Every vertex of $D'$ reachable from $v$ has out-degree at least 
		$(h+1) \cdot (d(2b-2) + 1) + d$;
		\item The number of vertices of $D'$ at distance at most $(h+1)(2b-1)$ from $v$ is less than 
%		$\frac{d^{h+1}-1}{d-1} \cdot \big(d(2b-2) + 1\big)$.
		$d^h$.  
%		$C \cdot \Delta^{\ell}$. 
	\end{enumerate}
	Then $D'$ contains a type-III gadget $G$ and a dipath $P_0$ from $v$ to $p(G)$ such that $V(P_0) \cap V(G) = \{p(G)\}$, $|V(G)| \leq (2h+2)(2b-1)$ and $|V(P_0)| \leq h(2b-1)$. 
%	\newline
%	??? Do we actually need that $|P| \leq h-2$ (for using Lemma \ref{lem:III_gadget_embedding} in the proof of Theorem \ref{thm:cycle_orientation_main})? ???
\end{lemma}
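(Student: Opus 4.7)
Set $R := 2b-1$ throughout. My plan is to build a BFS out-tree $T$ in $D'$ rooted at $v$, of depth $(h+1)R - 1$, and locate inside it a \emph{cross-arc} — an arc $(u,w)\in A(D')$ with $u,w\in V(T)$ and $w$ not a tree-child of $u$ — that directly encodes the desired type-III gadget. By Condition~2 we have $|V(T)| < d^h$, and by Condition~1 combined with the depth choice, every vertex $u\in V(T)$ at depth at most $(h+1)R-2$ has at least $M := (h+1)(d(R-1)+1)+d$ out-neighbors in $D'$, all of which must lie in $T$ (being at graph-distance at most $(h+1)R-1$ from $v$).

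The central observation is that any cross-arc $(u,w)$ whose least common ancestor $a := \mathrm{LCA}_T(u,w)$ satisfies $d_T(a)\le hR-1$ and $d_T(u)-d_T(a)\ge R$ (which, together with the BFS property $d_T(w)\le d_T(u)+1$ and the choice $d_T(w)=d_T(u)+1$, forces $d_T(w)-d_T(a)\ge R+1$) directly yields the required gadget. Indeed, one takes $p(G):=a$, $q(G):=$ first vertex on $T[a,w]$ after $a$, $P_1 := T[a,u]\circ(u,w)$ of length $d_T(u)-d_T(a)+1\ge R+1$, $P_2 := T[q(G),w]$ of length $d_T(w)-d_T(a)-1\ge R$, and $P_0 := T[v,a]$. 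The size estimates $|V(P_0)|=d_T(a)+1\le hR$ and $|V(G)|=d_T(u)+d_T(w)-2d_T(a)+1\le (2h+2)R$ then follow from the depth bounds, while internal disjointness of $P_1$ and $P_2$ is immediate from the tree structure.

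It remains to exhibit a cross-arc satisfying the LCA-depth requirements, and I plan to do so by a counting/pigeonhole argument. For each vertex $u\in V(T)$ at depth at least $R$ whose depth does not exceed $(h+1)R-2$, the out-neighbors $w$ of $u$ that fail the LCA condition — i.e.\ those for which $\mathrm{LCA}_T(u,w)$ is less than $R$ levels above $u$, or lies below depth $hR-1$ — are all descendants of the ancestor of $u$ at depth $d_T(u)-R+1$, and this subtree of $T$ (restricted to the relevant depths) can be shown to contain at most $(h+1)(d(R-1)+1)$ vertices. Subtracting from $M$ leaves the built-in slack of $d$, guaranteeing at least one ``good'' out-neighbor and hence the desired cross-arc; applying this at any $u$ at depth $\geq R$ (which exists since the high out-degree of $v$ forces $T$ to extend well beyond depth $R$) produces the gadget. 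The main obstacle is making the counting of bad out-neighbors precise so as to match the specific form $(h+1)(d(R-1)+1)+d$ of $M$, and ensuring that the depth constraints on $a$, $u$, and $w$ fit simultaneously within the BFS budget of $(h+1)R-1$ — this is exactly what the parameters in Conditions~1 and~2 have been engineered to afford.
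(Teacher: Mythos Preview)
There is a genuine gap in your counting argument. You claim that for a vertex $u$ at depth at least $R$ in the BFS tree, the out-neighbours $w$ with $\mathrm{LCA}_T(u,w)$ fewer than $R$ levels above $u$ all lie in the subtree rooted at the ancestor $a_0$ of $u$ at depth $d_T(u)-R+1$, and that this subtree (restricted to the relevant depths) has at most $(h+1)(d(R-1)+1)$ vertices. The first part is correct, but the size bound is unjustified and in general false: in a BFS tree the branching at each vertex is not bounded by $d$ --- Condition~1 only gives a \emph{lower} bound of $M$ on out-degrees, and many of those out-neighbours can become tree-children. So the subtree rooted at $a_0$ may already contain far more than $M-d=(h+1)(d(R-1)+1)$ vertices within $R$ levels, and your pigeonhole fails. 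The only global control you have is $|V(T)|<d^h$, which is far too weak for this local bound.

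The paper's proof avoids exactly this difficulty by \emph{not} using a BFS tree. Instead it grows a tree with forced $d$-ary branching: at each branch vertex one attaches precisely $d$ internally disjoint dipaths of length $R$ (if possible). This makes $T$ the $R$-subdivision of a $d$-ary arborescence, so the number of vertices ``close'' to the root-to-$u$ path is genuinely bounded by $hd(R-1)+h+1$. Condition~2 is then used, not to bound subtree sizes, but to guarantee that the $d$-ary growth must fail at some branch vertex $u\in L_i$ with $i\le h$; the failure is analysed via a maximal $d$-tuple of short paths at $u$, whose stuck endpoint $w$ must send an arc into the already-built tree, and the controlled branching lets one count the ``bad'' targets and find a good cross-arc. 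Your sketch has the right endgame (extract a type-III gadget from a cross-arc with deep LCA), but it is missing the key structural idea that makes the counting go through.
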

\begin{proof}
		We describe a process for producing a (specific) out-arborescence $T \subseteq D$ with root $v$. 
		The idea is as follows: going level by level (in a breadth-first manner), we will try to attach to each vertex $u$ of the (current) lowest level a collection of $d$ dipaths of length $2b-1$ each, which intersect only at $u$ and do not intersect the (current) tree in any other vertex. In this manner, we will construct a {\em $(2b-1)$-subdivision} of a {\em $d$-ary out-arborescence}, where an $s$-subdivision of a digraph $F$ is a subdivision of $F$ in which every arc is replaced with a dipath of length (exactly) $s$, and a $d$-ary out-arborescence is an out-arborescence in which every non-leaf vertex has exactly $d$ children. We will then use Item 2 to argue that rather soon in this process, intersections of branches must occur. Such an intersection will give rise to the desired type-III gadget. The \nolinebreak details \nolinebreak follow. 
		
		Throughout the process, we will maintain and update an out-arborescence $T$ and sets $L_i$, $i \geq 0$. 
		We start by setting $L_0 = \{v\}$ and initializing $T$ to be the one-vertex tree with root $v$. 
%		(the tree $T$ will be updated along the process). 
		Let $i \geq 0$, and suppose that we have already defined $L_0,\dots,L_i$. If $L_i = \emptyset$ then we stop and say that the process terminated at step $i$.
		Otherwise, initialize $L_{i+1}$ to be the empty set and proceed as follows. Let $u_1,\dots,u_t$ be an enumeration of the vertices in $L_i$. Going over $j = 1,\dots,t$ in increasing order, we let $\mathcal{P}(u_j)$ be the set of all dipaths of length $2b-1$ which start at $u_j$ and are otherwise disjoint from $V(T)$. If $\mathcal{P}(u_j)$ contains $d$ dipaths $Q_1,\dots,Q_d$ with 
		$V(Q_k) \cap V(Q_{\ell}) = \{u_j\}$ for all $1 \leq k < \ell \leq d$, then attach these dipaths to $T$ and add their endpoints to $L_{i+1}$. Otherwise, i.e. if $\mathcal{P}_j$ does not contain $d$ dipaths $Q_1,\dots,Q_d$ which pairwise intersect only at $u_j$, then do nothing; in this case $u_j$ will remain a leaf of $T$ throughout the \nolinebreak process.
			
		Consider the out-arborescence $T$ at the end of the process. 	
		It is easy to see that $T$ is indeed the $(2b-1)$-subdivision of some $d$-ary out-arborescence $T_0$, and that the branch vertices of this subdivision are precisely the elements of $\bigcup_{i \geq 0}{L_i}$. It follows that $|L_i| \leq d^i$ for every $i \geq 0$.  
		
		We claim that there is $0 \leq i \leq h$ such that $L_i$ contains a leaf of $T$. Indeed, suppose by contradiction that for every $0 \leq i \leq h$, no vertex of $L_i$ is a leaf. Then $|L_i| = d^i$ for every $0 \leq i \leq h+1$. Observe that 
		the number of vertices of $T$ which are at distance at most $(h+1)(2b-1)$ from $v$ (in $T$) is exactly $|L_{h+1}| + \sum_{i=0}^{h}{\big(d(2b-2) + 1\big) \cdot |L_i|}$. Hence, the number of such vertices is at least
		$$
		\sum_{i=0}^{h}{\big(d(2b-2) + 1\big) \cdot |L_i|} = \big(d(2b-2) + 1\big) \cdot \sum_{i=0}^{h}{d^i} 
%		= 
%		\frac{d^{h+1}-1}{d-1} \cdot \big(d(2b-2) + 1\big) 
		\geq 
		d^h,
		$$
		in contradiction to the assumption in Item 2 of the lemma.
		
		So let $0 \leq i \leq h$ be such that $L_i$ contains a leaf of $T$, and let $u \in L_i$ be such a leaf. 
		Let $X$ be the set of vertices of $T$ which are at distance at most $(i+1) \cdot (2b-1)$ from the root $v$. In other words, $X$ consists of the sets $L_0,\dots,L_{i+1}$ and the (vertices of the) subdivision dipaths connecting $L_j$ to $L_{j+1}$ for $j = 0,\dots,i$. 
		Say that a $d$-tuple of dipaths $(Q_1,\dots,Q_d)$ is {\em good} if
		\begin{enumerate}
			\item[(a)] For every $k=1,\dots,d$, it holds that $|Q_k| \leq 2b-1$,  $V(Q_k) \cap X = \{u\}$, and $u$ is the first vertex of $Q_k$.
			\item[(b)] $V(Q_k) \cap V(Q_{\ell}) = \{u\}$ for all $1 \leq k < \ell \leq d$.
		\end{enumerate}
		Among all good $d$-tuples of dipaths, let $(Q_1,\dots,Q_d)$ be one which maximizes $|Q_1| + \dots + |Q_d|$ (note that taking $Q_1,\dots,Q_d$ to be empty dipaths (starting at $u$) gives a good $d$-tuple, so the set of good $d$-tuples is non-empty). Observe that if we had $|Q_1| = \dots = |Q_d| = 2b-1$, then the algorithm would have attached $Q_1,\dots,Q_d$ to $T$, in contradiction to our assumption that $u$ is a leaf. Thus, there must be some $1 \leq k \leq d$ such that $|Q_k| \leq 2b-2$. Suppose without loss of generality  that $|Q_1| \leq 2b-2$, and let $w$ be the last vertex of $Q_1$. Evidently, $w$ is reachable from $u$ and hence also from $v$, implying that 
		$d^+(w) \geq (h+1) \cdot (d(2b-2) + 1) + d$ by Item 1. If $w$ had an out-neighbour in 
		$V(D') \setminus (X \cup V(Q_1) \cup \dots \cup V(Q_d))$, then we could extend $Q_1$ and thus obtain a longer good $d$-tuple of dipaths, in contradiction to the maximality of $(Q_1,\dots,Q_d)$. Thus, $N^+(w) \subseteq X \cup V(Q_1) \cup \dots \cup V(Q_d)$. 
		As $|N^+(w) \cap (V(Q_1) \cup \dots \cup V(Q_d))| \leq |V(Q_1) \cup \dots \cup V(Q_d)| - 1 = |V(Q_1)| + \dots + |V(Q_d)| - (d-1) - 1 \leq d \cdot 2b - 1 - d 
		= d(2b-1) - 1$, we must have 
%		$|N^+(w) \cap X| \geq hd(2b-2) + h + 1$. 
		$|N^+(w) \cap X| \geq d^+(w) - d(2b-1) + 1 \geq hd(2b-2) + h + 2$. 
		
		For each vertex $x \in X$, let $y(x)$ denote the lowest common ancestor of $u$ and $x$ in the tree \nolinebreak $T$. 
		Let $X'$ be the set of all vertices $x \in X$ such that (at least) one of the vertices $u,x$ is at distance at most $2b-2$ from $y(x)$ in $T$. We now show that $|X'| \leq h d (2b-2) + h + 1$. 
		Let $P$ be the unique dipath (in $T$) from $v$ to $u$. For each $0 \leq j \leq i-1$, let $y_j$ be the unique element of $V(P) \cap L_j$. Observe that if $x \in X'$, then either $x \in V(P)$, or there is $0 \leq j \leq i-1$ such that $x$ is an internal vertex of one of the $d-1$ subdivision dipaths which start at $y_j$ and are not subpaths of $P$. (Recall that every non-leaf branching vertex of $T$ is the first vertex of exactly $d$ subdivision dipaths. It is evident that for every $0 \leq j \leq i-1$, exactly one of the $d$ subdivision dipaths starting at $y_j$ is a subpath of $P$, while the other $d-1$ only intersect $P$ at $y_j$.) It follows that 
		$|X'| = |V(P)| + i \cdot (d-1) \cdot (2b-2) = i \cdot (2b-1) + 1 + i \cdot (d-1) \cdot (2b-2) = i \cdot d \cdot (2b-2) + i + 1 \leq h d (2b-2) + h + 1$, 
		as claimed. 
		
%		Let $X^*$ be the set of all $x \in X \setminus \{u\}$ for which there is $P \in \mathcal{P}$ such that $x \in V(P)$.
		As $|N^+(w) \cap X| \geq h d (2b-2) + h + 2 > |X'|$, there exists $x \in N^+(w) \cap (X \setminus X')$. 
		Setting $y = y(x)$, let $P'_1$ (resp. $P'_2$) be the unique dipath (in $T$) from $y$ to $x$ (resp. $u$). Since $x \notin X'$, we have $|P'_1|,|P'_2| \geq 2b-1$. 
		As $u \in L_i$, we have $|P'_2| \leq i(2b-1) \leq h(2b-1)$, and as $x \in X$ we have $|P'_1| \leq (h+1)(2b-1)$. 
		Let $z$ be the second vertex of $P'_2$.  
%		Now set $P_0 := P[v,y]$, $P_1 :=  P'_1[z,u] \circ Q_1 \circ (w,x)$ and $P_2 := P'_2$. 
		Now set $P_0 := P[v,y]$, $P_1 := P'_1$ and $P_2 :=  P'_2[z,u] \circ Q_1 \circ (w,x)$. 
		Observe that $P_0,P_1,P_2$ are internally vertex-disjoint (as $y$ is the lowest common ancestor of $x$ and $u$), and that $P_1$ and $P_2$  have length at least $2b-1$ each 
		(indeed, we have $|P_1| = |P'_1| \geq 2b-1$ and $|P_2| \geq |P'_2| - 1 + |Q_1| + 1 \geq |P'_2| \geq 2b-1$). 
		So we see that $P_1,P_2$ form a type-III gadget $G$ with $p(G) = y$ and $q(G) = z$, and that this gadget satisfies $V(P_0) \cap V(G) = \{y\} = \{p(G)\}$. Finally, observe that
		$|P_0| \leq (i-1)(2b-1) \leq h(2b-1)$, $|P_1| = |P'_1| \leq (h+1)(2b-1)$ and $|P_2| = (|P'_2| - 1) + |Q_1| + 1 \leq h(2b-1) + 2b-2 \leq (h+1)(2b-1)$. It follows that $|V(G)| = |V(P_1)| + |V(P_2)| - 1 \leq (2h+2)(2b-1)$. This completes the proof. 
\end{proof}

\subsection{Putting It All Together}
	\begin{proof}[Proof of Theorem \ref{thm:cycle_orientation_main}]
%		As mentioned before, the case $a = 1$ of Theorem \ref{thm:cycle_orientation_main} was established in \cite{aboulker}. Thus, we will only prove the theorem for $a \geq 2$. 
		Let $a \geq 2$ and $b \geq 1$. Recall that we set 
		$g := 4b^2$. We also fix an integer $k = O(ab^7)$, to be chosen later. 
		Suppose, for the sake of contradiction, that the theorem is false, and let $D$ be a counterexample to the theorem which minimizes $|V(D)| + |A(D)|$. Namely, we assume that $\delta^+(D) \geq k$, $\dgirth(D) \geq g$ and $D$ does not contain a subdivision of $C_{a,b}$, but every digraph $D'$ with $|V(D')| + |A(D')| < |V(D)| + |A(D)|$, $\delta^+(D') \geq k$ and $\dgirth(D') \geq g$ does contain a subdivision of $C_{a,b}$. 
		
		\paragraph{Claim 1.} $d^+(v) = k$ for every $v \in V(D)$.
		\begin{proof}
			Suppose, by contradiction, that $d^+(v) \geq k+1$ for some $v \in V(D)$. Let $D'$ be the digraph obtained from $D$ by deleting an (arbitrary) arc whose tail is $v$. Then $\delta^+(D') \geq k$ and $\dgirth(D') \geq g$, but $D'$ does not contain a subdivision of $C_{a,b}$ (as $D'$ is a subgraph of $D$). This contradicts the minimality of $D$.  
		\end{proof}
		
		\paragraph{Claim 2.} For every $(x,y) \in A(D)$, either $D$ contains a directed cycle of length exactly $g$ through $(x,y)$, or there is $z \in V(D) \setminus \{x,y\}$ such that $(z,x),(z,y) \in A(D)$.
		\begin{proof}
			Let $(x,y) \in A(D)$. 
			Suppose by contradiction that the assertion of the claim is false. 
			Let $D'$ be the digraph obtained from $D$ by deleting $x$ and adding the arc $(z,y)$ for every $z \in N_D^-(x)$. 
			Evidently, $|V(D')| + |A(D')| < |V(D)| + |A(D)|$. 
			We claim that $\delta^+(D') \geq k$ and 
			$\dgirth(D') \geq g$. 
			First, note that $d^+_{D'}(y) = d^+_{D}(y) = k$ because $(y,x) \notin A(D)$ (as $\dgirth(D) \geq g > 2$). Next, observe that for every $z \in V(D') \setminus \{y\} = V(D) \setminus \{x,y\}$ we also have $d^+_{D'}(z) = d^+_{D}(z) = k$, because $z$ does not have both $x$ and $y$ as out-neighbors (by our assumption). It follows that $\delta^+(D') \geq k$. 
			Now suppose, for the sake of contradiction, that $D'$ contains a directed cycle $C'$ of length at most $g-1$. If there is no $z \in N_D^-(x)$ such that $(z,y) \in A(C')$, then $C'$ is also contained in $D$, which is impossible as $\dgirth(D) \geq g$. So let $z \in N_D^-(x)$ be such that $(z,y) \in A(C')$, and let $C$ be the directed cycle obtained from $C'$ by deleting the arc $(z,y)$ and adding the arcs $(z,x),(x,y)$. Then $C$ is contained in $D$ and has length $|C'| + 1 \leq g$, implying that $|C| = g$. But this is impossible as we assumed that $D$ contains no directed cycle of length $g$ through the arc $(x,y)$. We conclude that $\dgirth(D') \geq g$, as claimed. 			
			
			The minimality of $D$ implies that $D'$ contains a subdivision $S'$ of $C_{a,b}$. If there is no $z \in N_D^-(x)$ such that $(z,y) \in A(S')$, then $S'$ is also contained in $D$, contradicting our assumption that $D$ contains no subdivision of $C_{a,b}$. 
			Suppose then that the set $Z := \{z \in N_D^-(x) : (z,y) \in A(S')\}$ is non-empty. Since the maximum in-degree of $C_{a,b}$ is $2$, we have $|Z| \leq 2$. 
			Assume first that $|Z| = 1$, and write $Z = \{z\}$. By replacing the edge $(z,y)$ of $S'$ with the path $(z,x),(x,y)$ (which is present in $D$), we obtain a subdivision of $C_{a,b}$ contained in $D$, a contradiction. Suppose now that $|Z| = 2$, and write $Z = \{z_1,z_2\}$. Then $y$ must be a branch vertex in $S'$, and we must have $d^+_{S'}(y) = 0$ (since every branch vertex of $C_{a,b}$ is either a source or a sink). Let $S$ be the subgraph of $D$ obtained from $S'$ by deleting the edges $(z_1,y),(z_2,y)$ and adding the edges $(z_1,x),(z_2,x)$. Then $S$ is a subdivision of $C_{a,b}$ in which $x$ plays the branch-vertex role played in $S'$ by $y$. Again, we have arrived at a contradiction to our assumption that $D$ contains no subdivision of $C_{a,b}$.
		\end{proof}
		
%		Fix an integer $h$, to be chosen later, such that $g \leq (h+1)(2b-1)$. 
%		Fix an integer $h \geq 2b+1$, to be chosen later.
		Let $\mathcal{C}$ be a chain with spine $P = v_0,\dots,v_m$ and partition $A(P) = A_1 \cup A_2$ (as in Definition \nolinebreak \ref{def:chain}). We say that $\mathcal{C}$ is {\em good} if the following conditions are satisfied:
		\begin{enumerate}
			\item[(a)] Every gadget in $\mathcal{C}$ has at most $(8g+6)(2b-1)$ vertices;
			\item[(b)] $(v_{m-1},v_m) \in A_2$;
			\item[(c)] Among any $(4g+3)(2b-1)$ consecutive arcs of $P$, there is an arc belonging to $A_2$.  
		\end{enumerate}
		Among all good chains contained in $D$, let $\mathcal{C}$ be one of maximal length, and let $P = v_0,\dots,v_m$, $A_1,A_2$ and $(G_e)_{e \in A_2}$ be as in Definition \ref{def:chain}. 
		Define 
		$i_0 := \max\big\{0, m - (4g+3)(2b-1)(a+3)(b+1)\big\}$ and
		$
		\mathcal{C}' := \mathcal{C}[v_{i_0},v_m].
		$
		Item (a) implies that
%		and note that 
%		$V(\mathcal{C}') \leq h(a+2)b + 2 + (2g-2) \cdot (h(a+2)b + 1) = (2g-1)h(a+2)b + 2g \leq 8ghab$ 
%		$|V(\mathcal{C}')| \leq 2g \cdot (h-1)(a+2)(b+1) \leq 8ghab$
%		(in the first inequality we used Item (a)).  
%		\begin{equation}\label{eq:size_of_C'}
%		|V(\mathcal{C}')| \leq \max\{2g,(2h+2)(2b-1)\} \cdot (m-i) = 
%		O\left( (g+hb)ahb^2 \right) = O\left( (b^2+hb)ahb^2 \right).
%		\end{equation} 
		\begin{equation}\label{eq:size_of_C'}
		\begin{split}
		|V(\mathcal{C}')| \leq (8g+6)(2b-1) \cdot (m-i_0) &\leq 
		2(4g+3)^2(2b-1)^2(a+3)(b+1) 
		\\ &\leq 
		8b^2(4g+3)^2(a+3)(b+1).
		\end{split} 
		\end{equation} 
%		Here, the inequality follows from Item (a).  
		Our choice of $i_0$ implies that either $i_0=0$ and $\mathcal{C}'=\mathcal{C}$, or $i_0 = m - (4g+3)(2b-1)(a+3)(b+1)$, in which case we have by Item (c) that
		\begin{equation*}\label{eq:richness_of_C'}
		|A_2(\mathcal{C}')| \geq \left\lfloor \frac{m-i_0}{(4g+3)(2b-1)} \right\rfloor = (a+3)(b+1).
		\end{equation*}
		Let $D'$ be the digraph obtained from $D$ by deleting the vertex-set $V(\mathcal{C}) \setminus \{v_m\}$.
		
		\paragraph{Claim 3.} Every $u \in V(D')$ which is reachable from $v_m$ in $D'$ satisfies $d^+_{D'}(u) \geq k - |V(\mathcal{C}')|$.
		\begin{proof}
		If $i_0=0$, then $\mathcal{C}'=\mathcal{C}$ and the claim follows directly by definition of $D'$. So assume now that $i_0>0$ and hence $|A_2(\mathcal{C'})| \ge (a+3)(b+1) \geq (a+3)(b+1)-2$.
			Let $Q$ be a dipath from $v_m$ to $u$ in $D'$. Suppose by contradiction that $d^+_{D'}(u) < k - |V(\mathcal{C}')|$. Since $d^+_{D}(u) \geq k$ and $V(D) \setminus V(D') \subseteq V(\mathcal{C})$, we must have 
			$|N^+_D(u) \cap V(\mathcal{C})| > |V(\mathcal{C}')|$.  
			Hence, there must be some $x \in V(\mathcal{C}) \setminus V(\mathcal{C}')$ such that $(u,x) \in A(D)$. Let $0 \leq j \leq m$ be such that $x \in G_{(v_j,v_{j+1})}$, and note that $j < i_0$ because $x \notin V(\mathcal{C}')$. Now let $\mathcal{C}''$ be the chain formed by concatenating $\mathcal{C}[v_j,v_m]$ with the dipath $Q$ (in this chain, all arcs of $Q$ belong to $A_1(\mathcal{C}'')$). 
			This is indeed a chain because $V(Q) \cap V(\mathcal{C}) = \{v_m\}$.
			As $\mathcal{C}'$ is contained in $\mathcal{C}''$, we have $|A_2(\mathcal{C}'')| \geq |A_2(\mathcal{C}')| \geq (a+3)(b+1) - 2$. Observe that we are precisely in the setting of Item 1 of Lemma \ref{lem:chain_main} with respect to the chain $\mathcal{C}''$. Indeed, the last vertex of the spine of $\mathcal{C}''$, namely $u$, sends an arc to $x \in V(G_{(v_j,v_{j+1})})$, and $(v_j,v_{j+1})$ is the first arc of the spine of $\mathcal{C}''$. So we may apply Item 1 of Lemma \ref{lem:chain_main} to deduce that $D$ contains a subdivision of $C_{a,b}$, a contradiction. 
		\end{proof}
		
		\paragraph{Claim 4.}
		Let $u \in V(D')$ be a vertex whose distance from $v_m$ in $D'$ is at most $(4g+3)(2b-1)$. Then $D$ contains a dipath of length at most $2g$ from $V(\mathcal{C}')$ to $u$.   
		\begin{proof}
			Let $Q = (w_0 = v_m,w_1,\dots,w_{t-1},w_t = u)$ be a shortest dipath from $v_m$ to $u$ in $D'$. Then $t = |Q| \leq (4g+3)(2b-1)$. 
%			Suppose, for the sake of contradiction, that $D$ contains no dipath of length at most $2g$ from $V(\mathcal{C}')$ to $u$. 
			Claim 2 states that $D$ satisfies the condition of Lemma \ref{lem:I_II_gadget_embedding}. 
			By applying Lemma \ref{lem:I_II_gadget_embedding} to the arc $(w_{t-1},u)$, we infer that $D$ contains a gadget $G^*$ which is either of type I or extended type-II, such that $p(G^*) = w_{t-1}$, $q(G^*) = u$ and $|V(G^*)| \leq 2g$. 
			
			We now show that if $V(G^*) \cap V(\mathcal{C}') \neq \emptyset$, then the assertion of the claim holds. So suppose that $V(G^*) \cap V(\mathcal{C}') \neq \emptyset$, and let $x \in V(G^*) \cap V(\mathcal{C}')$. By Item 2 of Lemma \ref{lem:gadget_property_basic}, $G^*$ contains a dipath from $x$ to $\{w_{t-1},u\}$, and hence also to $u$, as $(w_{t-1},u) \in A(G^*)$. Evidently, this dipath has length at most $|V(G^*)| \leq 2g$. So we see that $D$ contains a dipath of length at most $2g$ from $V(\mathcal{C}')$ to $u$, as required. 
			To complete the proof, it hence suffices to show that $V(G^*) \cap V(\mathcal{C}') \neq \emptyset$. For the rest of the proof we assume, for the sake of contradiction, that $V(G^*) \cap V(\mathcal{C}') = \emptyset$.
			We proceed by a case analysis over the type of $G^*$.
			
			\paragraph{Case 1.} $G^*$ is an extended gadget of type II. Let $G^*_0$ be the basic part of $G^*$. 
			We claim that $V(G^*_0) \cap V(Q) = \{w_{t-1},u\}$. Suppose otherwise, and let $0 \leq j \leq t-2$ be such that $w_j \in V(G^*_0)$. By the definition of a basic type-II gadget, every vertex in $V(G^*_0)\setminus \{u\}$ has an arc to $q(G^*_0) = u$. In particular, $(w_j,u) \in A(D)$, and hence also $(w_j,u) \in A(D')$ (as $w_j,u \in V(D')$). It follows that $w_0,\dots,w_{j-1},w_j,u$ is a dipath from $w_0 = v_m$ to $u$ in $D'$ which is shorter than $Q$, in contradiction to our choice of $Q$. So indeed we have $V(G^*_0) \cap V(Q) = \{w_{t-1},u\}$. 
			
			We claim that $V(G^*) \cap V(\mathcal{C}) \neq \emptyset$. So suppose by contradiction that 
			$V(G^*) \cap V(\mathcal{C}) = \emptyset$.
			% or $V(G^*) \cap V(\mathcal{C}) = \{v_m\}$
			% or $V(G^*) \cap V(\mathcal{C}) \subseteq \{v_m\}
			Then one can extend the chain $\mathcal{C}$ into a longer good chain $\mathcal{C}_1$ by adding the dipath $Q$ and the gadget $G^*_0$; the definition of $\mathcal{C}_1$ includes setting $(w_{t-1},u) \in \nolinebreak A_2(\mathcal{C}_1)$, $G_{(w_{t-1},u)}(\mathcal{C}_1) = G^*_0$,
			and $(w_j,w_{j+1}) \in A_1(\mathcal{C}_1)$ for every $0 \leq j \leq t-2$. Then 
			$\mathcal{C}_1$ is indeed a chain because $V(G^*_0) \cap V(Q) = \{w_{t-1},u\}$ and due to our assumption that $V(G^*) \cap V(\mathcal{C}) = \emptyset$.
			The goodness of $\mathcal{C}_1$ (i.e. that $\mathcal{C}_1$ satisfies Items (a)-(c) above) follows from the goodness of $\mathcal{C}$ and the fact that $|Q| \leq (4g+3)(2b-1)$ and $|V(G^*_0)| \leq 2g \le (8g+6)(2b-1)$. As the existence of $\mathcal{C}_1$ stands in contradiction to the maximality of \nolinebreak $\mathcal{C}$, our assumption $V(G^*) \cap V(\mathcal{C}) = \emptyset$ must have been wrong, as required. 
			
			We have thus shown that $V(G^*) \cap V(\mathcal{C}) \neq \emptyset$. Since $V(G^*) \cap V(\mathcal{C}') = \emptyset$ by assumption, we must have $V(G^*) \cap (V(\mathcal{C}) \setminus V(\mathcal{C}')) \neq \emptyset$. This means that $V(G^*) \cap V(G_{(v_i,v_{i+1})}) \neq \emptyset$ for some $0 \leq i < i_0$ (as $V(\mathcal{C}) \setminus V(\mathcal{C}')$ is contained in the union of $V(G_{(v_i,v_{i+1})})$ over all $0 \leq i < i_0$). Let $i_1$ be the largest such $0 \leq i < i_0$, and set $G := G_{(v_{i_1},v_{i_1+1})}$.  
			Now let $\mathcal{C}_1$ be the chain obtained by attaching to $\mathcal{C}[v_{i_1},v_m]$ the dipath 
			$Q - u = (w_0 = v_m,w_1,\dots,w_{t-1})$. This is indeed a chain because $V(Q) \cap V(\mathcal{C}) = \{v_m\}$ (as $V(Q) \subseteq V(D')$ and $(V(\mathcal{C}) \setminus \{v_m\}) \cap V(D') = \emptyset$). Then $|A_2(\mathcal{C}_1)| \geq |A_2(\mathcal{C}')| \geq (a+3)(b+1) - 2$ because $\mathcal{C}_1$ contains $\mathcal{C}'$ and $i_0>0$. Observe that Condition 2 in Lemma \ref{lem:chain_main} holds for the chain $\mathcal{C}_1$ with respect to the vertex $z^* := u$ (and with $z_{\ell} = w_{t-1}$, $z_0 = v_{i_1}$ and $z_1 = v_{i_1+1}$). Indeed, there is an arc from the last vertex of the spine of $\mathcal{C}_1$, namely $w_{t-1}$, to $u \notin V(\mathcal{C}_1)$, and there is an extended type-II gadget $G^*$ such that $p(G^*) = w_{t-1}$, $q(G^*) = u$,  $V(G) \cap \nolinebreak V(G^*) \neq \emptyset$ and 
			$V(\mathcal{C}_1) \cap V(G^*) \subseteq V(G) \cup \{w_{t-1}\}$ (here we use our choice of $i_1$). By Lemma \ref{lem:chain_main}, $D$ contains a subdivision of $C_{a,b}$, a contradiction. 
			
			\paragraph{Case 2.} $G^*$ is of type I, i.e., a directed cycle of length at least $g$ through $(w_{t-1},u)$. 
%			We proceed similarly to the previous case, but replace the dipath $Q$ with a (possibly) different dipath $Q'$. 
			Let $j$ be the smallest integer in $\{0,\dots,t-1\}$ satisfying $w_j \in V(G^*)$; note that $j$ is well-defined because $w_{t-1} \in V(G^*)$.
			Let $w'$ be the vertex of the directed cycle $G^*$ immediately following $w_j$, and 
%			put $Q' := Q[w_0 = v_m,w_j] \circ (w_j,w') = (w_0,w_1,\dots,w_j,w')$.
			consider the dipath $Q' := (w_0 = v_m,w_1,\dots,w_j,w')$. 
			Our choice of $j$ implies that \linebreak $w' \notin \{w_0,\dots,w_{j-1}\}$ (so $Q'$ is indeed a path) and that $V(G^*) \cap V(Q') = \{w_j,w'\}$. 
			Note also that $j \geq 1$ because $w_0 = v_m \in V(\mathcal{C}')$ and $V(G^*) \cap V(\mathcal{C}') = \emptyset$ by assumption. Hence, $w_j \notin V(\mathcal{C})$.
			
			Similarly to the previous case, if $V(G^*) \cap V(\mathcal{C}) = \emptyset$, 
			% or $V(G^*) \cap V(\mathcal{C}) = \{v_m\}$
			% or $V(G^*) \cap V(\mathcal{C}) \subseteq \{v_m\}
			then one can extend $\mathcal{C}$ into a longer good chain $\mathcal{C}_1$ by adding the dipath $Q'$ and the gadget $G^*$; the definition of $\mathcal{C}_1$ includes setting $(w_j,w') \in A_2(\mathcal{C}_1)$, $G_{(w_j,w')}(\mathcal{C}_1) = G^*$, and $(w_i,w_{i+1}) \in A_1(\mathcal{C}_1)$ for every $0 \leq i \leq j-1$. Then 
			$\mathcal{C}_1$ is indeed a chain because $V(G^*) \cap V(Q') = \{w_j,w'\}$ and $V(G^*) \cap V(\mathcal{C}) = \emptyset$, and the goodness of $\mathcal{C}_1$ follows from the goodness of $\mathcal{C}$ and the fact that $|Q| \leq (4g+3)(2b-1)$ and \linebreak $|V(G^*)| \leq 2g \le (8g+6)(2b-1)$. So we see that having 
			$V(G^*) \cap V(\mathcal{C}) = \emptyset$ contradicts the maximality of $\mathcal{C}$, and hence $V(G^*) \cap V(\mathcal{C}) \neq \emptyset$.
			
%		Assume first that $V(G^*) \cap V(\mathcal{C}) \neq \emptyset$. 
		Walk along the directed cycle $G^*$, starting from $w_j$, until the first time that a vertex of $V(\mathcal{C})$ is met. Denote this vertex by $x$, and the preceding vertex on $G^*$ by $y$. Consider the dipath $Q'' := (w_0 = v_m,w_1,\dots,w_j) \circ G^*[w_j,y]$, and observe that $V(Q'') \cap V(\mathcal{C}) = \{v_m\}$ because $V(Q) \cap V(\mathcal{C}) = \{v_m\}$ and by our choice of $x$. 
		Since $V(G^*) \cap V(\mathcal{C}') = \emptyset$, we must have 
		$x \in V(G^*) \cap (V(\mathcal{C}) \setminus V(\mathcal{C}')) \neq \emptyset$. This means that $x \in V(G_{(v_i,v_{i+1})})$ for some 
		$0 \leq i < i_0$. 
%		(as $V(\mathcal{C}) \setminus V(\mathcal{C}')$ is contained in the union of $V(G_{(v_i,v_{i+1})})$ over all $0 \leq i < i_0$). 
		Now let $\mathcal{C}_1$ be the chain obtained by concatenating $\mathcal{C}[v_{i},v_m]$ with the dipath 
		$Q''$. This is indeed a chain because $V(Q'') \cap V(\mathcal{C}) = \{v_m\}$ . Then $|A_2(\mathcal{C}_1)| \geq |A_2(\mathcal{C}')| \geq (a+3)(b+1) - 2$ because $\mathcal{C}_1$ contains $\mathcal{C}'$ and $i_0>0$. 
		Observe that Condition 1 in Lemma \ref{lem:chain_main} holds for the chain $\mathcal{C}_1$ (with $y$ playing the role of $z_{\ell}$). Indeed, there is an arc from the last vertex of the spine of $\mathcal{C}_1$, namely $y$, to $x \in V(G_{(i,i+1)})$, and $(v_i,v_{i+1})$ is the first arc of the spine of $\mathcal{C}_1$. By Lemma \ref{lem:chain_main}, $D$ contains a subdivision of $C_{a,b}$, a contradiction. This completes the proof of Claim 4. 
		\end{proof}
		
		With Claims 3-4 at hand, we can complete the proof of the theorem. To this end, we will apply Lemma \ref{lem:III_gadget_embedding}. 
		By combining Claim 4 with the fact that $\Delta^+(D) = k$, we conclude that the number of vertices of $D'$ at distance at most $(4g+3)(2b-1)$ from $v_m$ (in $D'$) is at most $|V(\mathcal{C}')| \cdot k^{2g}$. 
		We will apply Lemma \ref{lem:III_gadget_embedding} with parameters $h := 4g+2$ and $d := 2b(4g+3)(a+3)(b+1)$. 
		To this end, we will need to verify that
		\begin{equation}\label{eq:choice_of_h}
		k - |V(\mathcal{C}')| \geq (4g+3) \cdot (d(2b-2) + 1) + d \text{ and } 
		|V(\mathcal{C}')| \cdot k^{2g} < 
		d^{4g+2}. 
		\end{equation}
%		Recalling that $g = 4b^2$, choose 
%%		$h := \lceil 2g\log_2 k + \log_2 |V(\mathcal{C}')| - \log_2(2b-1) \rceil$, 
%		$h := \lceil 8b^2\log_2 k + \log_2 |V(\mathcal{C}')|\rceil$, 
%		so as to satisfy the latter inequality. Plugging this into the former inequality, we see that it is enough to have
%		$k \geq |V(\mathcal{C}')| + \lceil 8b^2\log_2 k + \log_2 |V(\mathcal{C}')|\rceil \cdot (4b-3) + 4b-1$. 
%		To this end, first recall that by \eqref{eq:size_of_C'}, there is an absolute constant $C_0 \geq 1$ such that $|V(\mathcal{C}')| \leq C_0 \cdot ah^2b^3$. 
%		Now, set $h := 20C_0 \cdot b^2 \geq 2b-1$ and $d := ab^2h = 20C_0 \cdot ab^4$. For these choices of $h$ and $d$, we have $|V(\mathcal{C}')| \leq 400C_0^3 \cdot a b^7$ and 
	This is the point where we choose the value of $k$; set 
	$k := 12b^2(4g+3)^2(a+3)(b+1)$, noting that $k = O(ab^7)$ because $g = 4b^2$.
		Both inequalities in \eqref{eq:choice_of_h} follow from \eqref{eq:size_of_C'} and our choice of $k$ and $d$. Indeed, we have:
%		\begin{align*}
%		|V(\mathcal{C}')| + (h+1) \cdot d \cdot (2b-2) + h + 1 &\leq 
%		400C_0^3 \cdot a b^7 + 
%		(20C_0 \cdot b^2 + 1) \cdot (20C_0 \cdot ab^4 \cdot (2b-2) + 1) 
%		\\ &=
%		1200 C_0 \cdot  
%		\end{align*} 
		\begin{align*}
		|V(\mathcal{C}')| + (4g+3) \cdot (d(2b-2) + 1) + d 
		&\leq 
		|V(\mathcal{C}')| + (4g+3) \cdot 2db
		\\ &\leq 
%		2(4g+3)^2(2b-1)^2(a+3)(b+1) + (4g+3) \cdot 2db
		8b^2(4g+3)^2(a+3)(b+1) + (4g+3) \cdot 2db
		\\ &\leq 
		12b^2(4g+3)^2(a+3)(b+1) = k,  
		\end{align*}
		and 
		\begin{align*}
		|V(\mathcal{C}')| \cdot k^{2g} &\leq 
		8b^2(4g+3)^2(a+3)(b+1) \cdot k^{2g} \\ &= 
		8 \cdot 12^{2g} \cdot b^{4g+2}(4g+3)^{4g+2}(a+3)^{2g+1}(b+1)^{2g+1} 
		\\ &= 
		8 \cdot 12^{2g} \cdot 2^{-4g-2} \cdot (a+3)^{-2g-1}(b+1)^{-2g-1} \cdot d^{4g+2} 
		\\ &=
%		8 \cdot 12^{2g} \cdot 2^{-4g-2} \cdot 4^{-2g-1} \cdot 2^{-2g-1} \cdot d^{4g+2} 
		2 \cdot 3^{2g} \cdot (a+3)^{-2g-1}(b+1)^{-2g-1} \cdot d^{4g+2} 
		< d^{4g+2}.
		\end{align*}
		
%		To this end, first recall that by \eqref{eq:size_of_C'}, there is an absolute constant $C_2$ such that $|V(\mathcal{C}')| \leq C_2 \cdot ah^2b^3$. 
%		Now, recall that by \eqref{eq:size_of_C'}, there is an absolute constant $C_3$ such that $|V(\mathcal{C}')| \leq C_3ah^2b^3$. 
%		So we see that if $k := C \cdot \dots$ for large enough $C$, then $k \geq |V(\mathcal{C}')| + (h+1) \cdot d \cdot (2b-2) + h + 1$, as required by the first inequality in \eqref{eq:choice_of_h}. 
%		
%		Thus, if $C_2$ is large enough then $(h+1) \cdot d \cdot (2b-2) \geq |V(\mathcal{C}')|$.  
%		Thus, and since $g=4b^2$, it is sufficient for $h$ to satisfy
%		$k \geq 4C'ah^2b^3 + h(4b-3) + 4b - 1$ and 
%		$4C'ah^2b^3 \cdot k^{8b^2} \leq 2^h$ (as clearly $2^h \leq (2^{h+1}-1)(4b-3)$). For the former inequality, it is enough to have $k \geq C''ah^2b^3$ (for some suitable absolute constant $C''$). Choosing $h := \lfloor\sqrt{k/(C''ab^3)}\rfloor$, it remains to verify the inequalities
%		$$
%		4C'ah^2b^3 \cdot k^{8b^2} \le \frac{4C'}{C''} \cdot k^{8b^2+1} \leq 2^{\lfloor\sqrt{k/(C''ab^3)}\rfloor}, \text{ and }h \ge 2b+1,
%		$$
%		which indeed hold if $k \ge Cab^7\log^2(ab)$ for some suitably large absolute constant $C$. 
		
%		Fix a choice of $h$ for which \eqref{eq:choice_of_h} holds. 
		Claims 3 and 4 imply that $D'$ satisfies Conditions 1 and 2 in Lemma \ref{lem:III_gadget_embedding}, respectively, 
%		with respect to $v := v_m$ 
		with the role of $v$ played by $v_m$, 
		and with the parameters $h$ and $d$ chosen above. 
		By Lemma \ref{lem:III_gadget_embedding}, $D'$ contains a type-III gadget $G$ and a dipath $P_0$ from $v_m$ to $p(G)$ such that $V(P_0) \cap V(G) = \{p(G)\}$, $|V(G)| \leq (2h+2)(2b-1) = (8g+6)(2b-1)$ and $|V(P_0)| \leq h(2b-1) = (4g+2)(2b-1) \leq (4g+3)(2b-1) - 1$. 
%		$(2h+2)(2b-1)$ and $|V(P_0)| \leq h(2b-1)$. 
		Now, let $\mathcal{C}_1$ be the chain formed by appending to $\mathcal{C}$ the dipath $P_0$ and the gadget $G$; so the spine of $\mathcal{C}_1$ is $P \circ P_0 \circ (p(G),q(G))$, $A_1(\mathcal{C}_1) = A_1(\mathcal{C}) \cup A(P_0)$ and $A_2(\mathcal{C}_1) = A_2(\mathcal{C}) \cup \{(p(G),q(G))\}$. It is easy to see that $\mathcal{C}_1$ is indeed a chain and that it satisfies Conditions (a)-(c) above. But this contradicts the maximality of $\mathcal{C}$. This final contradiction means that our initial assumption, that $D$ is a counterexample to Theorem \ref{thm:cycle_orientation_main}, was false. This completes the proof of the theorem.
	\end{proof}

\section{Oriented cycles with two blocks}\label{sec:twoblocks}
In this section, we prove Theorem~\ref{thm:betterbound}. We will repeatedly use the following observation:
\begin{lemma}\label{lem:fork}
	Let $\ell_1, \ell_2 \in \mathbb{N}$, and $D$ a digraph with $\delta^+(D) \ge \ell_1+\ell_2$. Then for every $v \in V(D)$, there are dipaths $P_1$ and $P_2$ in $D$ of length $\ell_1$ and $\ell_2$, respectively, which start in $v$ and satisfy $V(P_1) \cap V(P_2)=\{v\}$.
\end{lemma}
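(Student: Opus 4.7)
The plan is to construct $P_1$ and $P_2$ together by a greedy vertex-by-vertex extension procedure, starting from the trivial initialization $P_1 = P_2 = (v)$. At each step of the procedure I maintain a dipath $P_1 = (v, x_1, \ldots, x_i)$ with $i \le \ell_1$ and a dipath $P_2 = (v, y_1, \ldots, y_j)$ with $j \le \ell_2$, such that $V(P_1)$ and $V(P_2)$ intersect only in $v$. If $i < \ell_1$, I attempt to extend $P_1$ by picking any out-neighbor of $x_i$ (or of $v$, if $i=0$) lying outside the current union $U := V(P_1) \cup V(P_2)$; otherwise, if $j < \ell_2$, I do the analogous extension for $P_2$. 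The procedure halts once $|P_1| = \ell_1$ and $|P_2| = \ell_2$.

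The only thing to verify is that the extension step is always feasible. Suppose we wish to extend $P_1$ from its current endpoint $w$. Since $D$ is loopless, $w \notin N^+(w)$, so the number of out-neighbors of $w$ lying in $U$ is at most $|U| - 1 = i + j$. By the degree hypothesis $|N^+(w)| \ge \ell_1 + \ell_2$, which gives
\[
|N^+(w) \setminus U| \;\ge\; \ell_1 + \ell_2 - (i+j).
\]
Whenever we are extending $P_1$ we have $i \le \ell_1 - 1$ and $j \le \ell_2$, so the right-hand side is at least $1$ and a valid continuation vertex exists; the symmetric bound applies when extending $P_2$. Taking this new vertex as $x_{i+1}$ (respectively $y_{j+1}$) preserves both the disjointness condition $V(P_1) \cap V(P_2) = \{v\}$ and the property that $P_1, P_2$ are dipaths.

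Since each step strictly increases $i + j$ and the procedure terminates exactly when $(i,j) = (\ell_1, \ell_2)$, we end with the desired pair of dipaths. There is no real obstacle to overcome beyond keeping the bookkeeping of $|U|$ and the forbidden out-neighbors straight; the slight subtlety is remembering to subtract $1$ from $|U|$ because the current endpoint itself is a used vertex but cannot be its own out-neighbor, which is exactly what makes the count tight enough to work when one of the paths has reached its target length and the other still needs to grow.
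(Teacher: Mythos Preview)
Your proof is correct and is exactly the greedy extension argument the paper uses; the paper simply states ``greedily build two disjoint dipaths starting at $v$ by attaching out-neighbors at their ends until they have lengths $\ell_1$ and $\ell_2$,'' and you have spelled out the bookkeeping that makes this work.
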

\begin{proof}
	Greedily build two disjoint dipaths starting at $v$ by attaching out-neighbors at their ends until they have lengths $\ell_1$ and $\ell_2$, respectively. 
\end{proof}
\begin{proof}[Proof of Theorem~\ref{thm:betterbound}]
	Let $D$ be an arbitrary digraph such that $\delta^+(D) \ge k_1+3k_2-5$. We have to show that there exist two internally vertex-disjoint dipaths in $D$ which start and end in the same vertices, one of length at least $k_1$, the other of length at least $k_2$. Throughout the proof, we will say that a dipath $P$ in $D$ with terminal vertex $x$ is \emph{$k_2$-good} if there exist dipaths $P_1$ and $P_2$ of length $k_2-1$ starting at $x$ such that $V(P_1) \cap V(P_2)=\{x\}$ and $V(P_i) \cap V(P)=\{x\}$ for $i \in \{1,2\}$. Note that $D$ contains a $k_2$-good dipath of positive length. Indeed, choose some arbitrary vertex $u \in V(D)$ and some out-neighbor $v$ of $u$. Since $\delta^+(D-u)\ge\delta^+(D)-1 \ge k_1+3k_2-6 \ge (k_2-1)+(k_2-1)$, we can apply Lemma~\ref{lem:fork} with $\ell_1:=\ell_2:=k_2-1$ to the vertex $v$ in the digraph $D-u$ to infer that $P:=(u,v)$ is a $k_2$-good dipath.
%	Since at least one $k_2$-good dipath of positive length exists in $D$, there must be a longest $k_2$-good dipath $P_0$ of positive length in $D$. 
	Let $P_0$ be a longest $k_2$-good dipath in $D$. We have just shown that $|P_0| > 0$. 
	Denote by $x$ the end-vertex of $P_0$ and by $P_1, P_2$ two dipaths of length $k_2-1$ starting in $x$ such that $V(P_i) \cap V(P_j)=\{x\}$ for $i \neq j \in \{0,1,2\}$.
	Let $a$ be the terminus of $P_1$ and $b$ the terminus of $P_2$. 
	\paragraph{Claim 1.} There exist dipaths $P_a, P_b$ starting in $a, b$ and ending in vertices $a', b' \in V(P_0) \setminus \{x\}$, respectively, such that $P_a$ and $P_b$ are internally vertex-disjoint from $V(P_0) \cup V(P_1) \cup V(P_2)$.
	\begin{proof}
		We prove the existence of $P_a$ and $a'$; the proof for the existence of $P_b$ and $b'$ is completely analogous. Let $D':=D-((V(P_1) \cup V(P_2))\setminus \{a\})$. Note that since $|(V(P_1) \cup V(P_2)) \setminus \{a\}|=2k_2-2$, we have $\delta^+(D') \ge \delta^+(D)-(2k_2-2) \ge k_1+k_2-3 \ge 2k_2-3$. Let $R \subseteq V(D')$ be the set of vertices reachable from $a$ by a dipath in $D'$. We claim that $R \cap (V(P_0)\setminus \{x\}) \neq \emptyset$. Suppose towards a contradiction that $R \cap (V(P_0)\setminus \{x\})=\emptyset$. Since for every vertex $r \in R$ we have $N_{D'}^+(r) \subseteq R$, we see that 
		$\delta^+(D'[R]) \geq \delta^+(D') \ge 2k_2-3$. We can now apply Lemma~\ref{lem:fork} to the vertex $a$ of $D'[R]$ with $\ell_1:=k_2-1, \ell_2:=k_2-2$ and find that $D'[R]$ contains dipaths $P_1'$ and $P_2'$ of lengths $\ell_1$ and $\ell_2$, respectively, which start at $a$ and satisfy $V(P_1') \cap V(P_2')=\{a\}$. Let $w$ be the end-vertex of the path $P_2'$. We have $N_D^+(w) \cap (V(P_0) \setminus \{x\}) \subseteq R \cap (V(P_0) \setminus \{x\})=\emptyset$. Since $d_D^+(w) \ge k_1+3k_2-5>3k_2-4=|V(P_1) \cup V(P_1') \cup (V(P_2')\setminus \{w\})|$, there must exist $w' \in N_D^+(w)\setminus(V(P_1) \cup V(P_1') \cup V(P_2'))$. Now the dipaths $P_1'$ and $P_2'\circ(w,w')$ are of length $k_2-1$, have only the starting vertex $a$ in common and are disjoint from the set $(V(P_0) \cup V(P_1))\setminus\{a\}$. Hence, $P_0 \circ P_1$ is a $k_2$-good dipath in $D$ which is longer than $P_0$, a contradiction. This shows that indeed $R \cap (V(P_0) \setminus \{x\})\neq \emptyset$. Hence, by the definition of $R$, there is a shortest dipath $P_a$ from $a$ to $R \cap (V(P_0) \setminus \{x\})$ in $D'[R]$. Write $V(P_a) \cap (V(P_0) \setminus \{x\}) =:\{a'\}$. Now $P_a$ and $a'$ satisfy the claimed properties.
	\end{proof}
	Let $A, B \subseteq V(P_0) \setminus \{x\}$ be the sets of vertices on $P_0-x$ reachable from $a,b$, respectively, by a dipath which is internally vertex-disjoint from $V(P_0) \cup V(P_1) \cup V(P_2)$. By the previous claim we have $A, B \neq \emptyset$. Let $a^\ast$ respectively $b^\ast$ denote the vertex in $A$ respectively $B$ whose distance from $x$ on $P_0$ is maximum. By symmetry, we may assume without loss of generality that $\text{dist}_{P_0}(a^\ast,x) \ge \text{dist}_{P_0}(b^\ast,x)$. Hence, $B \subseteq V(P_0[a^\ast,x])$. Fix some dipath $P_{a^\ast}$ from $a$ to $a^\ast$ in $D$ which is internally disjoint from $V(P_0) \cup V(P_1) \cup V(P_2)$. Set $Q:=P_1 \circ P_{a^\ast}$, and note that $|Q| = |P_1| + |P_{a^\ast}| = k_2 - 1 + |P_{a^\ast}| \geq k_2$. Let $Q' \subseteq Q$ be defined as follows: if the length of $Q$ is at most $k_1$ then $Q':=Q$, and otherwise $Q'$ is the unique subpath of $Q$ which starts at $x$ and has length exactly $k_1$. In the following, let $r$ denote the length of $Q'$. 
%	In each case, we have $k_2 \le r \le k_1$ and $P_1 \subseteq Q$. 
	Observe that $r = |Q'| = \min\{|Q|,k_1\}$, and hence $k_2 \leq r \leq k_1$. Moreover, $P_1 \subseteq Q'$ because $P_1$ consists of the first $k_2$ vertices of $Q$. 
	Let $y \in V(Q)$ be the terminus of $Q'$, and let us define $B^\ast$ as the subset of $B$ consisting of those vertices in $B \subseteq V(P_0-x)$ which are reachable from $b$ by a dipath which is internally vertex-disjoint from $V(P_0) \cup V(Q) \cup V(P_2)$.
	\paragraph{Claim 2.} We either have $|B^\ast| \ge k_1-r+1$, or there exists a dipath starting at $b$ and ending in $V(Q)$ which is vertex-disjoint from 
%	$(V(P_0) \setminus \{x\}) \cup (V(Q')\setminus \{y\}) \cup (V(P_2) \setminus \{b\})$.
	$(V(P_0) \cup V(Q') \cup V(P_2)) \setminus \{b,y\}$.
	\begin{proof}
		Suppose towards a contradiction that $|B^\ast| \le k_1-r$ but there exists no dipath starting at $b$ and ending in $V(Q)$ which is vertex-disjoint from 
%		$(V(P_0) \setminus \{x\}) \cup (V(Q')\setminus \{y\}) \cup (V(P_2) \setminus \{b\})$. 
		$(V(P_0) \cup V(Q') \cup V(P_2)) \setminus \{b,y\}$. 
		Let us consider the digraph 
%		$D'':=D-((V(P_0) \setminus \{x\}) \cup (V(Q') \setminus \{y\}) \cup (V(P_2)\setminus\{b\}))$. 
		$D'':=D-((V(P_0) \cup V(Q') \cup V(P_2)) \setminus \{b,y\})$. 
		Let $R \subseteq V(D'')$ denote the set of vertices reachable from $b$ in $D''$. By our assumption, we have $R \cap V(Q)=\emptyset$, and hence $R \cap (V(P_0) \cup V(Q) \cup V(P_2))=\{b\}$ (since $R \subseteq V(D'')$ and by the definition of $D''$). We claim that $N_D^+(u) \cap (V(P_0) \setminus \{x\}) \subseteq B^\ast$ for all $u \in R$. Indeed, let $u \in R$ and $v \in N_D^+(u) \cap (V(P_0) \setminus \{x\})$. By definition, there exists a $b$-$u$-dipath $P_u$ in $D''$, and $V(P_u) \subseteq R$. Then the dipath $P_u \circ (u,v)$ starts at $b$, ends in $V(P_0)\setminus\{x\}$ and is internally vertex-disjoint from $V(P_0) \cup V(Q) \cup V(P_2)$, certifying that $v \in B^\ast$. 
		
		Since $|(V(Q') \cup V(P_2))\setminus\{y,b\}|=r+k_2-2$, for every $u \in R$ we have: $$d_{D''}^+(u)\ge d_D^+(u)-|N_D^+(u) \cap (V(P_0) \setminus \{x\})|-|(V(Q') \cup V(P_2))\setminus\{y,b\}|$$ $$\ge k_1+3k_2-5-|B^\ast|-(r+k_2-2) \ge 2k_2-3,$$
		where in the last inequality we used our assumption that $|B^\ast| \le k_1-r$.
		As $N_{D''}^+(u) \subseteq R$ for every $u \in R$ (by the definition of $R$), we get that $\delta^+(D''[R]) \ge 2k_2-3$.
		Applying Lemma~\ref{lem:fork} to the vertex $b$ in $D''[R]$ with $\ell_1:=k_2-1, \ell_2:=k_2-2$, we find dipaths $P_1''$ and $P_2''$ in $D''[R]$ starting at $b$ of lengths $\ell_1$ and $\ell_2$, respectively, such that $V(P_1'') \cap V(P_2'')=\{b\}$. By the definition of $D''$ we have  
		$V(P''_i) \cap (V(P_0) \cup V(P_2)) = \{b\}$ for every $i=1,2$. Let $z$ denote the end-vertex of $P_2''$. We have $|V(P_2) \cup V(P_1'') \cup (V(P_2'')\setminus \{z\})|=3k_2-4$ and 
		$|N_D^+(z) \cap (V(P_0) \setminus \{x\})| \le |B^\ast| \le k_1-r \le k_1-k_2$. Here we used the fact that $z \in R$ and hence 
		$N_D^+(z) \cap (V(P_0) \setminus \{x\}) \subseteq B^\ast$. So we see that
		$$|N_D^+(z) \setminus (V(P_0) \cup V(P_2) \cup V(P_1'') \cup V(P_2''))| \ge k_1+3k_2-5-(k_1-k_2)-(3k_2-4)=k_2-1>0.$$
		Let $z' \notin V(P_0) \cup V(P_2) \cup V(P_1'') \cup V(P_2'')$ be an out-neighbor of $z$. The two dipaths $P_1''$ and $P_2'' \circ (z,z')$ start at $b$ and have length $k_2-1$ each. Moreover, the three dipaths $P_1''$, $P_2'' \circ (z,z')$ and $P_0 \circ P_2$ intersect each other only in the vertex $b$. Hence, $P_0 \circ P_2$ is a $k_2$-good dipath in $D$ which is strictly longer than $P_0$, a contradiction. This contradiction shows that our initial assumption was wrong, concluding the proof of Claim 2.
	\end{proof}
%	\noindent
	We will now show how to find a subdivision of $C(k_1,k_2)$ in $D$ using Claim 2. Consider the two alternatives in the conclusion of this claim. 
	The first case is that $|B^\ast| \ge k_1-r+1$. Since $B^\ast \subseteq B \subseteq V(P_0[a^\ast,x])$, this clearly implies that there exists a vertex $b^\ast \in B$ whose distance from $a^\ast$ on the dipath $P_0$ is at least $k_1-r$. By definition of $B^\ast$, there exists a dipath $P_{b^\ast}$ in $D$ starting in $b$ and ending at $b^\ast$ which is internally disjoint from $V(P_0) \cup V(Q) \cup V(P_2)$. Now the two dipaths $Q \circ P_0[a^\ast,b^\ast]$ and $P_2 \circ P_{b^\ast}$ in $D$ both start at $x$ and end at $b^\ast$, are internally vertex-disjoint, and have lengths $|Q|+|P_0[a^\ast,b^\ast]| \ge r+k_1-r=k_1$ and $|P_2|+|P_{b^\ast}| \ge k_2-1+1=k_2$, respectively. Hence, they form a subdivision of $C(k_1,k_2)$.
	
	The second case is that there exists a dipath in $D$ starting at $b$ and ending in $V(Q)$, which is vertex-disjoint from 
%	$(V(P_0) \setminus \{x\}) \cup (V(Q')\setminus \{y\}) \cup (V(P_2) \setminus \{b\})$. 
	$(V(P_0) \cup V(Q') \cup V(P_2)) \setminus \{b,y\}$.
	Let $P^\ast$ be a shortest such dipath, and let $q \in V(Q)$ denote its end-vertex. Then clearly $V(P^\ast) \cap V(Q)=\{q\}$, as well as $q \notin V(Q')\setminus \{y\}$ and $q \neq a^\ast$ (as $a^* \in V(P_0)$). This readily implies that $Q' \neq Q$, and hence by definition of $Q'$ we conclude that $Q'$ has length exactly $k_1$. Let us consider the two dipaths $Q[x,q]$ and $P_2 \circ P^\ast$ in $D$, which both start in $x$ and end in $q$. These two dipaths are internally vertex-disjoint, and have lengths $|Q[x,q]| \ge |Q'|=k_1$ and $|P_2| + |P^\ast| \ge k_2-1+1=k_2$, respectively. Hence, they form a subdivision of $C(k_1, k_2)$ in $D$.
	
	Summarizing, we have shown that $D$ contains a subdivision of $C(k_1,k_2)$ in all the cases, which concludes the proof of the theorem.
\end{proof}

\section{Subdivisions of $\bivectwo{K_3}-e$}\label{sec:k3-e}
In this section we give a proof of Theorem~\ref{thm:K3-e}. 
As it turns out, it is convenient to prove the following slightly stronger result, which clearly implies that $\text{mader}_{\delta^+}(\bivec{K}_3-e)=2$.
\begin{proposition}\label{prop:K_3-e}
Let $D$ be a digraph and $v_0 \in V(D)$ such that $d^+(v_0) \ge 1$ and $d^+(v) \ge 2$ for every $v \in V(D)\setminus \{v_0\}$. Then $D$ contains a subdivision of $\bivec{K}_3-e$.
\end{proposition}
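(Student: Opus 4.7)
The plan is to prove Proposition~\ref{prop:K_3-e} by induction on $|V(D)|$, with small cases ($|V(D)|\le 3$) handled by direct verification — for $|V(D)|=3$, the degree constraints force $D$ to be isomorphic to $\bivec{K}_3-e$ itself. For the inductive step, I would assume $D$ is a minimum counterexample and, by removing excess out-arcs, reduce to $d^+(v_0)=1$ and $d^+(v)=2$ for every $v\neq v_0$. Several preliminary reductions then tidy the structure: if $v_0$ has in-degree $0$, then $D-v_0$ has minimum out-degree $\ge 2$ and strictly fewer vertices, so induction applies with any vertex as the new special vertex; if some vertex of $D$ is unreachable from $v_0$, or $v_0$ is unreachable from some vertex $u$, then restricting to an appropriate reachable subset yields a smaller instance satisfying the hypothesis. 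After these reductions $D$ is strongly connected, so $v_0$ lies on some directed cycle.

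The core idea is to exhibit two directed cycles $C_1,C_2$ in $D$ sharing exactly one common vertex $b$, together with a ``closing'' dipath $P$ internally disjoint from $V(C_1)\cup V(C_2)$ running between $V(C_1)\setminus\{b\}$ and $V(C_2)\setminus\{b\}$ in either direction. A short check confirms such a configuration is always a subdivision of $\bivec{K}_3-e$: the two cycles form the digons on the pairs $\{a,b\}$ and $\{b,c\}$ of branch vertices, and $P$ gives the fifth path, with the two possible orientations of $P$ producing subdivisions that are isomorphic since $\bivec{K}_3$ minus any single arc yields a single digraph up to isomorphism. The pair $C_1,C_2$ sharing a single vertex is produced by a Thomassen-style argument (Theorem~6.2 of~\cite{thom2cycles}) using a shortest cycle $C$ in $D$ and a shortest ear from $V(C)$ back to $V(C)$ through $V(D)\setminus V(C)$; the single vertex of out-degree $1$ causes no obstacle because the shortest-cycle/shortest-ear argument only needs the ``second'' out-neighbour at vertices other than $v_0$. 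Using strong connectivity I would further arrange that $v_0\in V(C_1)\cup V(C_2)$.

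If the closing dipath $P$ exists in some direction, the subdivision is immediate. Otherwise I would define the set $R$ of vertices reachable from $V(C_2)\setminus\{b\}$ in $D-V(C_1)$ (or the symmetric variant according to which direction is obstructed). The non-existence of $P$ forces $N^+_D(R)\subseteq R\cup\{b\}$, so every out-arc from $R$ is trapped inside $S:=R\cup\{b\}$. Then $D[S]$ is a strictly smaller instance of the proposition with $b$ playing the role of the new special vertex: each $v\in R$ retains $d^+_{D[S]}(v)=d^+_D(v)=2$, while the $C_2$-successor of $b$ witnesses $d^+_{D[S]}(b)\ge 1$. Applying the inductive hypothesis to $D[S]$ yields a $\bivec{K}_3-e$ subdivision inside $D[S]\subseteq D$, contradicting the minimality of $D$. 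The main obstacle is ensuring $v_0\notin R$ so that the inductive hypothesis genuinely applies to $D[S]$: the point of arranging $v_0\in V(C_1)\cup V(C_2)$, combined with the flexibility of swapping the roles of $C_1$ and $C_2$ (thanks to the two admissible orientations of $P$), is precisely to secure this condition.
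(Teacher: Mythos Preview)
Your plan diverges from the paper's proof, which never invokes Thomassen's two-cycle theorem; instead, after the same preliminary reductions, the paper lets $v_1$ be the unique out-neighbour of $v_0$ and splits according to whether $N^-(v_0)\cap N^-(v_1)=\emptyset$. If so, it contracts the arc $(v_0,v_1)$ and applies induction to the resulting smaller digraph; if not, it applies Menger's theorem between $v_1$ and $\{v_0,z_0\}$ (where $z_0$ is a common in-neighbour) to build the subdivision directly from two internally disjoint $v_1$-$\{v_0,z_0\}$-dipaths together with the arcs $(v_0,v_1),(z_0,v_0),(z_0,v_1)$.

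Your approach has a genuine gap at the step ``using strong connectivity I would further arrange that $v_0\in V(C_1)\cup V(C_2)$''. You correctly identify this as the crux but give no mechanism for achieving it, and I do not see one. Suppose no closing path exists in either direction. Then your own inductive argument, applied to the partitions $(R_i,\{b\},\text{rest})$ for $i=1,2$, shows that in a minimum counterexample one is \emph{forced} to have $v_0\in R_1\cap R_2$---otherwise the induction on $D[R_i\cup\{b\}]$ already succeeds and $D$ was not a counterexample. But once $v_0\in R_1\cap R_2$, swapping $C_1$ and $C_2$ cannot help: the set $T$ of vertices reachable from $v_0$ in $D-b$ lies inside $R_1\cap R_2$ and is therefore disjoint from $V(C_1)\cup V(C_2)$, so $D[T\cup\{b\}]$ has \emph{two} vertices of out-degree at most $1$ (namely $v_0$, and $b$, both of whose out-neighbours lie on the cycles), and the inductive hypothesis no longer applies. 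Nothing in your outline explains how to avoid this configuration or how to re-route the argument through it. There is also a smaller issue: ``shortest cycle plus shortest ear'' does not by itself yield two cycles meeting in exactly one vertex, since the ear need not return to its starting point; Thomassen's actual argument is more delicate than your summary suggests, and its adaptation to a vertex of out-degree $1$ would need to be spelled out.
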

\begin{proof}
Suppose towards a contradiction that the claim is false, and let $D$ be a counterexample which minimizes $|V(D)|$ with first priority and $|A(D)|$ with second priority. Let $v_0 \in V(D)$ be a vertex such that $d^+(v_0) \ge 1$ and $d^+(v) \ge 2$ for all $v \in V(D)\setminus \{v_0\}$.
\paragraph{Claim 1.} We have $d^+(v_0)=1$ and $d^+(v)=2$ for all $v \in V(D)\setminus \{v_0\}$.
\begin{proof}
If $d^+(v_0)>1$ or $d^+(v)>2$ for some $v \in V(D) \setminus \{v_0\}$, 
%then let $a \in A(D)$ be an out-arc of $v_0$ or $v$ respectively. Considering the digraph $D-a$, we find that this is a digraph meeting the conditions of the Lemma which is smaller than $D$ but contains no subdivision of $\bivec{K}_3-e$, a contradiction to the assumed minimality of $D$.
then we may delete an arc of $D$ and be left with a digraph $D'$ which still satisfies $d^+_{D'}(v_0) \ge 1$ and $d^+_{D'}(v) \ge 2$ for every $v \in V(D)\setminus \{v_0\}$. This contradicts the assumed minimality of $D$ (as $D'$ evidently contains no subdivision of $\bivec{K}_3-e$ either). 
\end{proof}
\paragraph{Claim 2.} $D$ is strongly connected.
\begin{proof}
If not, then there is $\emptyset \neq X \subsetneq V(D)$ such that no arc of $D$ leaves $X$. Then clearly $d_{D[X]}^+(x)=d^+(x)$ for all $x \in X$, and hence $D[X]$ meets the conditions of the Lemma. But as $D[X]$ contains no subdivision of $\bivec{K}_3-e$ and is smaller than $D$, we get a contradiction to the minimality of $D$.
\end{proof}
\paragraph{Claim 3.} There exists no partition $(W,K,Z)$ of $V(D)$ such that $W, Z \neq \emptyset$, $v_0 \in  K \cup Z$, $|K| \le 1$ and there is no arc in $D$ with tail in $W$ and head in $Z$.
\begin{proof}
Suppose towards a contradiction that a partition $(W,K,Z)$ with the described properties exists. Since $D$ is strong, we must have $|K| = 1$; say $K=\{s_0\}$ for some vertex $s_0 \in V(D)$. Since $v_0 \notin W$ and since no arc of $D$ goes from $W$ to $Z$, every vertex in $W$ has out-degree $2$ in $D[W \cup \{s_0\}]$. Since $D$ is strongly connected, there must be an $s_0$-$W$-dipath $P$ in $D$. Denoting the last vertex of $P$ by $w \in W$, we note that $V(P) \setminus \{s_0,w\} \subseteq Z$. Let $D'$ be the digraph obtained from $D[W \cup \{s_0\}]$ by adding the arc $(s_0,w)$. We clearly have $d_{D'}^+(s_0) \ge 1$, as well as $d_{D'}^+(v)=2$ for every $v \in W$ by the above. Since $|V(D')|<|V(D)|$, the minimality of $D$ implies that $D'$ contains a subdivision $S'$ of $\bivec{K}_3-e$. If $S'$ does not use the arc $(s_0,w)$ then $S' \subseteq D$. And otherwise, the subdigraph $S \subseteq D$ of $D$ defined by $V(S):=V(S') \cup V(P)$, $A(S):=(A(S') \setminus \{(s_0,w)\}) \cup A(P)$ forms a subdivision of $\bivec{K}_3-e$ in $D$. In both cases we obtain a contradiction to our assumption that $D$ does not contain a subdivision of $\bivec{K}_3-e$. This concludes the proof of the claim.
\end{proof}
In the following, let $v_1 \in V(D)$ denote the unique out-neighbor of $v_0$. The rest of the proof is divided into two cases depending on whether $v_0$ and $v_1$ have common in-neighbors.

\textbf{Case 1.} $N^-(v_0) \cap N^-(v_1)=\emptyset$. Since $d^+(v_1)=2$, there exists $v_2 \in N^+(v_1)\setminus \{v_0\}$. Let $D'$ be the digraph obtained from $D-v_1$ by adding the arc $(v_0,v_2)$ and the arcs $(x,v_0)$ for all $x \in N_D^-(v_1) \setminus \{v_0\}$. We clearly have $d_{D'}^+(v_0)=1$ and $d_{D'}^+(v)=2$ for all $v \in V(D') \setminus \{v_0\}$, since no vertex in $D$ has arcs to both $v_0$ and $v_1$. Since $|V(D')|<|V(D)|$, there must be a subdivision $S'$ of $\bivec{K}_3-e$ contained in $D'$. If $v_0 \notin V(S')$, then $S'$ is a subdigraph of $D$, which contradicts our assumption that $D$ contains no ($\bivec{K}_3-e$)-subdivision. Hence we must have $v_0 \in S'$. Since $v_2$ is the only out-neighbor of $v_0$ in $D'$, we must have $d_{S'}^+(v_0)=1$ and $(v_0,v_2) \in A(S')$. We now distinguish between two subcases depending on the in-degree of $v_0$ in $S'$. Note that every vertex of $\bivec{K}_3-e$ has in-degree either $1$ or $2$. Hence, $d_{S'}^-(v_0) \in \{1,2\}$.  

\textbf{Case 1(a).} $d_{S'}^-(v_0)=1$. Let $x_0 \in N_{D'}^-(v_0)$ be the unique in-neighbor of $v_0$ in $S'$. By definition of $D'$, we must have either $x_0 \in N_D^-(v_0) \setminus \{v_1\}$ or $x_0 \in N_D^-(v_1) \setminus \{v_0\}$. Define a subdigraph $S \subseteq D$ of $D$ as follows: If $x_0 \in N_D^-(v_0) \setminus \{v_1\}$, then we put
$V(S):=V(S') \cup \{v_1\}$ and $A(S):=(A(S') \setminus \{(v_0,v_2)\})\cup \{(v_0,v_1),(v_1,v_2)\}$, and if $x_0 \in N_D^-(v_1) \setminus \{v_0\}$, then we put $V(S):=(V(S') \setminus \{v_0\}) \cup \{v_1\}$, $A(S):=(A(S') \setminus \{(x_0,v_0),(v_0,v_2)\}) \cup \{(x_0,v_1),(v_1,v_2)\}$. It is easy to see that in each case $S$ is isomorphic to a subdivision of $S'$, and hence forms a subdivision of $\bivec{K}_3-e$ contained in $D$, a contradiction to our assumption on $D$.

\textbf{Case 1(b).} $d_{S'}^-(v_0)=2$. Let $x_1,x_2 \in N_{D'}^-(v_0)$ be the two in-neighbors of $v_0$ in $S'$. By definition of $D'$, we have 
%$x_i \in (N^-(v_0) \setminus \{v_1\}) \cup (N^-(v_1) \setminus \{v_0\})$
$x_i \in N_D^-(v_0) \setminus \{v_1\}$ or $x_i \in N_D^-(v_1) \setminus \{v_0\}$ 
for each $i=1,2$. Let us define a subdigraph $S \subseteq D$ of $D$ as follows.
Firstly, if $x_1, x_2 \in N_D^-(v_0) \setminus \{v_1\}$, then we set $V(S):=V(S') \cup \{v_1\}$ and $A(S):=(A(S')\setminus \{(v_0,v_2)\}) \cup \{(v_0,v_1),(v_1,v_2)\}$. Secondly, if $x_i \in N_D^-(v_0) \setminus \{v_1\}$ and $x_{3-i} \in N_D^-(v_1)\setminus \{v_0\}$ for some $i \in \{1,2\}$, then we set $V(S):=V(S') \cup \{v_1\}$ and $A(S):=(A(S')\setminus \{(v_0,v_2),(x_{3-i},v_0)\}) \cup \{(v_0,v_1),(v_1,v_2),(x_{3-i},v_1)\}$. Lastly, if $x_1,x_2 \in N_D^-(v_1)\setminus \{v_0\}$ then we set $V(S):=(V(S') \setminus \{v_0\}) \cup \{v_1\}$ and $A(S):=(A(S')\setminus \{(v_0,v_2),(x_1,v_0),(x_2,v_0)\})\cup\{((v_1,v_2),(x_1,v_1),(x_2,v_1)\}$. It is easy to check that in each of the three cases, $S$ is isomorphic to a subdivision of $S'$, and hence forms a subdivision of $\bivec{K}_3-e$ which is contained in $D$. This contradiction to our initial assumption on $D$ rules out Case 1.

\textbf{Case 2.} There exists a vertex $z_0 \in N^-(v_0) \cap N^-(v_1)$. Let now $A:=\{v_0,z_0\}$ and apply Theorem~\ref{setmenger} to the vertex $v_1$ versus the set $A$ in $D$. We conclude that either there are two $v_1$-$A$-dipaths intersecting only at $v_1$, or there is a set $K \subseteq V(D)\setminus \{v_1\}$ such that $|K| \le 1$ and there is no dipath in $D-K$ starting in $v_1$ and ending in $A$. 

In the first case, let $P_1$ and $P_2$ be dipaths such that $V(P_1) \cap V(P_2)=\{v_1\}$ and such that $P_1$ ends in $v_0$, while $P_2$ ends in $z_0$. Now the subdigraph $S \subseteq D$ with vertex set $V(S):=V(P_1) \cup V(P_2)$ and arc-set $A(S):=A(P_1) \cup A(P_2) \cup \{(v_0,v_1),(z_0,v_0),(z_0,v_1)\}$ forms a subdivision of $\bivec{K}_3-e$ with branch vertices $v_0, v_1, z_0$. This is a contradiction to our initial assumption on $D$. 

In the second case, let $W \subseteq V(D)-K$ be the subset of vertices reachable from $v_1$ by a dipath in $D-K$ and let $Z:=V(D) \setminus (W \cup K)$. Since there is no $v_1$-$A$-dipath in $D-K$, we must have $v_0 \in A \subseteq K \cup Z$. We further have $v_1 \in W$ and $A \setminus K \subseteq Z$, hence $W, Z\neq \emptyset$. Moreover, by definition of $W$, no arc in $D$ starts in $W$ and ends in $Z$. All in all, this shows that the partition $(W,K,Z)$ of $V(D)$ yields a contradiction to Claim 3. 

Since we arrived at contradictions in all possible cases, we conclude that our initial assumption about the existence of $D$ was wrong. This completes the proof of Proposition \ref{prop:K_3-e}. 
\end{proof}

\section{Subdivisions and arc-connectivity}\label{sec:arcconn}
In this section we prove Propositions~\ref{prop:arc_connectivity} and~\ref{prop:arc_connectivity2}, showing that $\bivec{K}_4$ and $\bivec{S}_4$ are not $\kappa'$-maderian.
%\begin{theorem}
%For every $k \in \mathbb{N}$ there exists a digraph $H_k$ with $\kappa'(H_k) \ge k$ such that $H_k$ contains no subdivision of $\bivec{K}_4$.
%\end{theorem}
\begin{proof}[Proof of Proposition \ref{prop:arc_connectivity}]
A construction of Thomassen~\cite{thom85} shows that for every integer $k \ge 1$, there exists a digraph $D_k$ such that $\delta^+(D_k)=k$ and $D_k$ contains no directed cycle of even length. For every $k \ge 1$ let $\revvec{D}_k$ denote the digraph obtained from $D_k$ by reversing all its arcs. Then clearly we have $\delta^-(\revvec{D}_k)=k$. Let $G_k'$ be the digraph obtained from the vertex-disjoint union of a copy of $D_k$ with vertex-set $A$ and a copy of $\revvec{D}_k$ with vertex-set $B$ by adding all the arcs in $B \times A$ (i.e., all arcs from $B$ to $A$). Note that since $|A|=|B|=|V(D_k)|>k$, we have $\delta^+(G_k')=\delta^-(G_k')=k$. Finally, we define $G_k$ as the digraph obtained from $G_k'$ by adding a vertex $v \notin V(G_k')$ as well as all arcs $(v,x), (x,v)$ for $x \in V(G_k')$. We claim that $G_k$ is strongly $k$-arc-connected. Indeed, let $E \subseteq A(G_k)$ be a set of arcs such that $|E|<k$. We claim that in $G_k-E$, every vertex $x \in V(G_k')$ can reach and is reachable from $v$ via a dipath. This will show that $G_k-E$ is strongly connected, as required. Let $x \in V(G_k')$ be given arbitrarily, and let $x_1,\ldots,x_k \in V(G_k')$ be $k$ pairwise distinct out-neighbors of $x$ in $G_k'$. Consider the $k$ arc-disjoint dipaths $P_i:=(x,x_i)\circ (x_i,v)$, $i=1,\ldots,k$. At least one of these dipaths must be disjoint from $E$ and hence constitute an $x$-$v$-dipath in $D-E$. With a symmetric argument considering $k$ distinct in-neighbors of $x$, we also obtain that there is a $v$-$x$-dipath in $G_k-E$, as required.
We further claim that $G_k$ contains no subdivision of $\bivec{K}_4$. Indeed, suppose this was the case, then clearly there would be $S \subseteq G_k-v=G_k'$ such that $S$ is a subdivision of $\bivec{K}_3$. As is easy to see, $S$ must contain an even directed cycle. Since there is no arc in $G_k'$ from $A$ to $B$, we find that this cycle must be entirely contained in either $G_k'[A] \simeq D_k$ or $G_k'[B] \simeq \revvec{D}_k$. This however means that $D_k$ contains an even directed cycle, a contradiction. This contradiction shows that $G_k$ contains no subdivision of $\bivec{K}_4$, and this concludes the proof. 
\end{proof}
\begin{proof}[Proof of Proposition~\ref{prop:arc_connectivity2}]
A construction of Thomassen~\cite{thom85} shows that for every integer $k \ge 1$, there exists a digraph $R_k$ such that $\delta^+(R_k)=k$ and $R_k$ contains no subdivision of the bioriented $3$-star $\bivec{S}_3$. For $k \ge 1$, let us denote by $\revvec{R}_k$ the digraph obtained from $R_k$ by reversing all its arcs. Let $H_k'$ be the digraph obtained from the disjoint union of a copy of $R_k$ with vertex-set $A$ and a copy of $\revvec{R}_k$ with vertex-set $B$ by adding all the arcs in $B \times A$. Since $R_k$ and $\revvec{R}_k$ have at least $k$ vertices, we obtain that $\delta^+(H_k')=\delta^-(H_k')=k$. We now define $H_k$ to be the digraph obtained from two disjoint copies of $H_k'$ with vertex-sets $X$ and $Y$ by adding two distinct new vertices $u$ and $v$ as well as the following arcs: $(u,x)$ and $(x,v)$ for every $x \in X$, and $(y,u)$ and $(v,y)$ for every $y \in Y$. We claim that $H_k$ is strongly $k$-arc-connected. Indeed, let $E \subseteq A(H_k)$ be an arbitrarily given set of arcs such that $|E|<k$. We must prove that $H_k-E$ is strongly connected. For this, it clearly suffices to show that in $H_k-E$, every vertex in $X$ can reach $v$ and is reachable from $u$, and every vertex in $Y$ can reach $u$ and is reachable from $v$. Let $x \in X$ be any given vertex, and let $x_1^-,\ldots,x_k^- \in X$ denote $k$ distinct in-neighbors of $x$ in $H_k[X] \simeq H_k'$. Among the $k$ arc-disjoint $u$-$x$-dipaths $(u,x_i^-)\circ(x_i^-,x), i=1,\ldots,k$ in $H_k$, at least one must also exist in $H_k-E$, and hence $x$ is reachable from $u$ in $H_k-E$. Similarly, considering $k$ distinct out-neighbors $x_1^+,\ldots,x_k^+ \in X$ of $x$ in $H_k[X]$, and considering the arc-disjoint $x$-$v$-dipaths $(x,x_i^+),(x_i^+,v),i=1,\ldots,k$, we find that there is an $x$-$v$-dipath in $H_k-E$. With a symmetric argument for the vertices in $Y$, we can verify the above claim, showing that $H_k-E$ is strongly connected. This shows that indeed \nolinebreak $\kappa'(H_k) \ge k$. 

Next we claim that $H_k$ does not contain a subdivision of $\bivec{S}_4$. Suppose otherwise. Then there exists a vertex $w \in V(H_k)$ and directed cycles $C_1,C_2,C_3,C_4$ in $H_k$ such that $w \in V(C_i)$ for $i=1,\ldots,4$, and such that the sets $V(C_i)\setminus\{w\}, 1 \le i \le 4$, are pairwise disjoint. Suppose first that $w \in \{u,v\}$. Without loss of generality, we may assume that $w = u$ (the case $w = v$ is symmetric). Then for each $1 \leq i \leq 4$, $C_i - w$ is a dipath which starts in $X$ and ends in $Y$ (since the vertex of $C_i$ preceding $w = u$ must be in $Y$, and the vertex of $C_i$ succeeding $w = u$ must be in $X$). It follows that $C_i-w, 1 \le i \le 4$, are pairwise vertex-disjoint dipaths from $X$ to $Y$, 
contradicting the fact that $X$ and $Y$ can be disconnected in $H_k$ by deleting only two vertices, namely $u$ and $v$. 
Suppose now that $w \in X \cup Y$. Note that every directed cycle in $H_k$ is either contained in $H_k[X]$, or contained in $H_k[Y]$, or contains both $u$ and $v$. Hence, if $w \in X$, then at least three of the cycles $C_i, 1 \le i \le 4$, are contained in $H_k[X] \simeq H_k'$, and if $w \in Y$ then at least three of the cycles $C_i, 1 \le i \le 4$, are contained in $H_k[Y] \simeq H_k'$. So we see that in each case, $H_k'$ must contain a subdivision of $\bivec{S}_3$. Since every subdivision of $\bivec{S}_3$ is a strongly connected digraph, and since there are no arcs from $A$ to $B$ in $H_k'$, we find that this subdivision must be entirely contained in either $H_k'[A] \simeq R_k$ or $H_k'[B] \simeq \revvec{R}_k$. Since $\bivec{S}_3$ is invariant under the reversal of all arcs, we obtain that in each case $R_k$ must contain a subdivision of $\bivec{S}_3$. This contradicts our initial assumptions on the sequence $(R_k)_{k \ge 1}$. This contradiction proves the claim of the proposition; namely, $H_k$ is indeed a $k$-strongly arc connected digraph not containing $\bivec{S}_4$ as a subdivision.
\end{proof}
\section{Open Problems}
In this concluding section, we would like to mention further open problems related to subdivisions in digraphs of large minimum out-degree, which we discovered during the work on this paper. 

Theorem~\ref{thm:cycles} shows that orientations of cycles are $\delta^+$-maderian, and that for an orientation $C$ of a cycle, $\text{mader}_{\delta^+}(C)$ grows polynomially in $|C|$. Aboulker~et~al.~actually conjectured the very explicit bound of $\text{mader}_{\delta^+}(C) \le 2|C|-1$ (cf.~\cite{aboulker}, Conjecture 27). However, it is even unclear to us whether $\text{mader}_{\delta^+}(C)$ should be linear in $|C|$ at all.
\begin{problem}
Does it hold that $\text{mader}_{\delta^+}(C)=O(|V(C)|)$ for every orientation $C$ of a cycle?
\end{problem}
We remark that Theorem~\ref{thm:cycle_orientation_main} gives a positive answer to this question when the size of a longest block in $C$ is bounded by a constant.

Disjoint union is a basic graph operation under which one might naturally anticipate the $\delta^+$-maderian property to be preserved. Yet, despite quite a bit of effort, this intuition is only known to hold in a few special cases. 
Thomassen's Theorem for example states that the disjoint union of $k$ digons is $\delta^+$-maderian for all $k$. 
A common generalization of this result and Theorem~\ref{thm:cycles} would be the following.

\begin{conjecture}
Any disjoint union of orientations of cycles is $\delta^+$-maderian.
\end{conjecture}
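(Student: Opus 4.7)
The approach is to embed the components of $F$ one at a time into $D$ using a deletion argument, powered by a bounded-size version of Theorem~\ref{thm:cycle_orientation_main}. Write $F = F_1 \sqcup \cdots \sqcup F_k$ where $F_i$ is an orientation of a cycle of length $\ell_i$, and set $\ell_{\max} := \max_i \ell_i$. The key enabling observation is that a careful reading of the proof of Theorem~\ref{thm:cycle_orientation_main} reveals that the subdivision of $C_{a,b}$ produced there is contained in the ``good chain'' $\mathcal{C}'$ together with the auxiliary path $P_0$ and gadget $G$ from Lemma~\ref{lem:III_gadget_embedding}, all of total size polynomial in $a$ and $b$ (one reads off $|V(\mathcal{C}')| = O(ab^7)$ directly from the proof). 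Consequently, for every \emph{acyclic} oriented cycle $F_i$ of length $\ell$, there exist polynomial functions $K(\ell)$ and $N(\ell)$ such that every digraph $D$ with $\delta^+(D) \geq K(\ell)$ contains a subdivision of $F_i$ on at most $N(\ell)$ vertices.

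For the directed cycle components $\vec{C}_{\ell_i}$ no such bounded-size lemma can hold in general, since the shortest directed cycle in $D$ may well be arbitrarily long. To handle these, first apply the Dellamonica--Koubek--Martin--R\"{o}dl theorem with girth parameter $\ell_{\max}$ to pass to a subdigraph $D_0 \subseteq D$ with $\delta^+(D_0) \geq K_0$ and $\dgirth(D_0) \geq \ell_{\max}$, for a polynomial $K_0 = K_0(\ell_{\max}, k)$ chosen below. Crucially, deleting vertices from $D_0$ cannot decrease the directed girth, so every directed cycle appearing in any subgraph of $D_0$ we subsequently consider has length at least $\ell_{\max} \geq \ell_i$ and therefore constitutes a valid subdivision of $\vec{C}_{\ell_i}$.

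Let $m$ denote the number of directed cycle components of $F$, let $f(m)$ be any bound of the form $\text{mader}_{\delta^+}(m\bivec{K}_2) \leq f(m)$ (e.g.\ the linear bound of Buci\'{c}), and set $K_0 := \max_i K(\ell_i) + f(m) + \sum_i N(\ell_i)$. Now iteratively embed the acyclic components in $D_0$: at step $j$, the current residual $D_0 - \bigcup_{i<j} V(S_i)$ has minimum out-degree at least $K_0 - \sum_{i<j} N(\ell_i) \geq K(\ell_j)$, so the bounded-size lemma yields a subdivision $S_j$ of $F_j$ with $|V(S_j)| \leq N(\ell_j)$. After all acyclic components are placed, the residual digraph still has minimum out-degree at least $f(m)$ and inherits directed girth $\geq \ell_{\max}$, so by the Buci\'{c}--Thomassen bound it contains $m$ vertex-disjoint directed cycles, each automatically of length at least $\ell_{\max}$, serving as subdivisions of the directed components of $F$. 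The resulting overall bound $\text{mader}_{\delta^+}(F) = O(K_0 \ell_{\max}^2 \log \ell_{\max})$ from the quantitative DKMR estimate is polynomial in $|V(F)|$.

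\textbf{Main obstacle.} The principal technical work lies in carefully extracting the polynomial bound $N(\ell)$ from the proof of Theorem~\ref{thm:cycle_orientation_main}. All ingredients --- the length of $\mathcal{C}'$, the size of each gadget type, the path $P_0$ from Lemma~\ref{lem:III_gadget_embedding} --- are already polynomially bounded inside the proof, but one must bookkeep carefully to verify that the final subdivision $R_1 \cup R_2$ constructed in the last step really does fit inside $\mathcal{C}' \cup G \cup P_0$ and does not secretly use additional vertices. A minor secondary concern, straightforward to resolve, is confirming that the Buci\'{c}--Thomassen disjoint cycles theorem applies cleanly to the residual digraph inheriting girth at least $\ell_{\max}$.
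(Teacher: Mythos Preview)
This statement appears in the paper's Open Problems section as a conjecture; the paper offers no proof.

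Your argument rests on the assertion that the proof of Theorem~\ref{thm:cycle_orientation_main} produces a subdivision of $C_{a,b}$ on polynomially many (in $a,b$) vertices. That assertion is not correct. The proof works with a minimal counterexample $D$ and a \emph{maximal} good chain $\mathcal{C}$ inside it, and there is no bound on $|\mathcal{C}|$ in terms of $a,b$ alone. The bounded-size subchain $\mathcal{C}'=\mathcal{C}[v_{i_0},v_m]$ is a bookkeeping device for Claims~3 and~4, not where the subdivision lives. The subdivision is actually located in the contradiction proofs of Claims~3 and~4: there one applies Lemma~\ref{lem:chain_main} to a chain $\mathcal{C}''$ (resp.\ $\mathcal{C}_1$) whose spine begins at some $v_j$ with $j<i_0$, possibly $j=0$, and the alternating path $R_2$ built by Lemma~\ref{lem:chain_anti_dir_path} traverses that entire spine from end to end. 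Hence the subdivision can be as large as $\mathcal{C}$ itself, i.e.\ of order comparable to $|V(D)|$. The final paragraph of the proof, where Lemma~\ref{lem:III_gadget_embedding} is invoked, does not construct any subdivision; it extends $\mathcal{C}$, which is the contradiction to maximality.

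There is a second obstruction you would have to overcome even to run the chain-growing procedure in an arbitrary digraph. Claim~2 (each arc either lies on a $g$-cycle or its endpoints share an in-neighbour), which is what allows Lemma~\ref{lem:I_II_gadget_embedding} to be applied inside Claim~4, is obtained by a contraction argument that only preserves the no-$C_{a,b}$-subdivision property under the minimal-counterexample hypothesis. Without it, type~I/II gadgets cannot be produced on demand and the chain need not extend. A bounded-size embedding lemma for $C_{a,b}$ would therefore require a genuinely new argument; your deletion scheme for combining the components is sound \emph{given} such a lemma, but establishing it is exactly the open content of the conjecture.
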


Digraph subdivision is another graph operation under which it is plausible to expect that the $\delta^+$-maderian property is preserved.
\begin{conjecture}\label{longersubdivisions}
If a digraph $F$ is $\delta^+$-maderian, all subdivisions of $F$ are $\delta^+$-maderian as well.
\end{conjecture}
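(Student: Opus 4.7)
The plan is to establish the following quantitative strengthening of Conjecture~\ref{longersubdivisions}: for every $\delta^+$-maderian digraph $F$ and every integer $L \geq 1$, there is $K^\ast = K^\ast(F,L)$ such that every digraph $D$ with $\delta^+(D) \geq K^\ast$ contains a subdivision of $F$ in which each subdivision path has length at least $L$. This implies the conjecture: given a subdivision $F'$ of $F$ in which each arc $e$ of $F$ is replaced by a dipath of length $\ell_e$, set $L := \max_e \ell_e$; then any subdivision of $F$ with all subdivision paths of length at least $L$ becomes a subdivision of $F'$ by chopping each long subdivision path into $\ell_e$ consecutive segments and declaring the intermediate cutting points as the branch vertices of $V(F')\setminus V(F)$.

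The intended approach first uses the Dellamonica--Koubek--Martin--R\"odl theorem stated right before Theorem~\ref{thm:cycle_orientation_main} to pass to a subdigraph $D'$ with $\delta^+(D') \geq K'$ and $\dgirth(D') \geq g$, for parameters $K' \geq K(F)$ and $g \gg L\cdot|E(F)|$. As $K' \geq K(F)$, the digraph $D'$ contains a subdivision of $F$, and among all such subdivisions we fix an extremal one $S^\ast$ maximising $\sum_{e \in E(F)} \min\{|P_e|, L\}$. The plan is to show $|P_e| \geq L$ for every $e$: otherwise, let $P_e$ with endpoints $u, v$ have length less than $L$, and consider $D^\ast := D' - (V(S^\ast)\setminus (V(P_e)\cup\{u,v\}))$. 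Any $u$--$v$-dipath in $D^\ast$ of length strictly greater than $|P_e|$ would, upon replacing $P_e$ in $S^\ast$, strictly increase $\sum_e \min\{|P_e|,L\}$, contradicting the extremality. When $e$ lies on a directed cycle of $F$, the large girth of $D'$ already forces the corresponding cycle in $S^\ast$ to have length at least $g$, which yields $|P_e| \geq L$ for free; the substance of the argument is therefore concentrated in the case where $e$ lies on no directed cycle of $F$, in particular when $F$ itself is acyclic.

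In this hard case a longer $u$--$v$-dipath in $D^\ast$ must be produced by an explicit rerouting argument: greedy extension from $u$ using the large minimum out-degree of $D^\ast$ yields arbitrarily long dipaths starting at $u$, but \emph{steering} one of these so that it terminates at the specified vertex $v$ is delicate. A natural first move is to apply Menger's theorem~\ref{vertexmenger} to $(u,v)$ in $D^\ast$: either there are many internally vertex-disjoint $u$--$v$-dipaths (whence one hopes to show, via the girth, that some of them are long), or there is a small vertex cut in $D^\ast$, which combined with $V(S^\ast)\setminus(V(P_e)\cup\{u,v\})$ yields a small $u$--$v$-cut of bounded size in $D'$ itself. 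Converting either branch of this dichotomy into a contradiction is the decisive hurdle. Small cuts in $D'$ are not a priori incompatible with large $\delta^+$ or large girth, and one must also control the size of $V(S^\ast)$, which need not be bounded solely in terms of $F$ and $L$ under the bare definition of $\delta^+$-maderian. Two avenues seem worth exploring: first, strengthening the extremality of $S^\ast$ (for instance via a secondary tie-breaker minimising $\sum_e|P_e|$, or by restricting to a smallest subdivision of $F$ whose existence might follow from a quantitative strengthening of the Mader-number bound), so that $|V(S^\ast)|$ is bounded in terms of $F$ and $L$; and second, adapting the chain-of-gadgets machinery of Section~\ref{sec:mindegreeorientedcycles} to build the long $u$--$v$-dipath directly from the out-branching structure around the cut. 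Either route appears to demand substantial new ideas.
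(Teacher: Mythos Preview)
This statement is a \emph{conjecture} in the paper, listed in the concluding section on open problems; the paper does not prove it and offers no proof to compare against. Immediately after stating the conjecture, the authors simply remark that it would follow from another open statement, Conjecture~\ref{conj:>=2-subdivision} (that every digraph of large enough minimum out-degree contains a subdivision, with all paths of length at least two, of some digraph of prescribed minimum out-degree). Both conjectures are left open.

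Your proposal is not a proof either, and you say so yourself in the final sentence (``Either route appears to demand substantial new ideas''). What you have written is a research plan: you formulate the natural quantitative strengthening (force a subdivision of $F$ with all subdivision paths of length at least $L$), correctly observe that it implies the conjecture, pass to large directed girth via the result of Dellamonica et al., and pick an extremal subdivision $S^\ast$. But the decisive step---rerouting a short $P_e$ through a longer $u$--$v$-dipath in the deleted digraph $D^\ast$---is explicitly left unresolved. One small slip worth flagging: the claim that ``when $e$ lies on a directed cycle of $F$, the large girth of $D'$ already forces $|P_e| \geq L$ for free'' is not justified as written; large girth only forces the \emph{total} length of the subdivided cycle to be at least $g$, not the length of each individual segment, so the extremal argument is still needed there. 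In summary, neither the paper nor your proposal contains a proof of this conjecture, and there is nothing further to compare.
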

Conjecture~\ref{longersubdivisions} would follow if we could show that every digraph of large enough out-degree contains a subdivision of some digraph of out-degree $k$ in which every subdivision path is long.  
\begin{conjecture}\label{conj:>=2-subdivision}
There is a function $f:\mathbb{N} \rightarrow \mathbb{N}$ such that for every $k \in \mathbb{N}$ and for every digraph $D$ with $\delta^+(D) \ge f(k)$, there exists a digraph $D'$ such that $\delta^+(D') \ge k$ and $D$ contains a subdivision of $D'$ in which every subdivision-path has length at least two.
\end{conjecture}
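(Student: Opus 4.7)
The natural first move is to appeal to the theorem of Dellamonica, Koubek, Martin and R\"odl already invoked earlier in the paper: it lets us replace $D$ by a subdigraph $D_0 \subseteq D$ with minimum out-degree some $N \gg k$ and directed girth at least some $g = g(k)$ to be tuned later. Taking $g \geq 3$ is cost-free and already kills all digons; a larger $g$ will be used to ensure that short dipaths between candidate branch vertices are plentiful and well-separated. The plan is then to extract from $D_0$ a set $B \subseteq V(D_0)$ of branch vertices, together with subdivision paths, so that $D_0$ contains a subdivision of some $D'$ on vertex set $B$ with $\delta^+(D') \geq k$ and all subdivision paths of length $\geq 2$.

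To enforce the length-$\geq 2$ requirement, one wants no arc of $D_0$ to have both endpoints in $B$, which is most cleanly achieved by picking $B$ at random: include every vertex of $D_0$ independently with a small probability $p = p(k)$. A standard alteration step lets us delete a small fraction of vertices from $B$ to destroy the expected $\approx pN$ ``bad'' out-arcs at each branch vertex that would directly land in $B$, leaving $(1-o(1))N$ out-arcs of every $v \in B$ going into $V(D_0) \setminus B$. Each such arc should then be extended by a dipath through $V(D_0) \setminus B$ until it reaches $B$; with $p$ tuned so that a random walk of length $\Theta(1/p)$ in $D_0$ hits $B$ with high probability, each $v \in B$ has at its disposal many length-$\geq 2$ dipaths ending in $B$ and internally disjoint from $B$. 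Selecting $k$ of these reaching $k$ \emph{distinct} branch vertices for each $v$ then realises the desired $D'$.

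The main obstacle is the global vertex-disjointness of the subdivision paths across all branch vertices simultaneously. For a single $v$, a Menger-type argument inside $D_0 - (B \setminus \{v\})$, combined with the abundance of length-$\geq 2$ paths to $B$, makes finding $k$ internally disjoint dipaths routine; but Thomassen's construction \cite{thom91}, recalled in the introduction, shows that high connectivity in digraphs does \emph{not} imply linkedness, so one cannot simply quote a directed linkage theorem to deal with many $v$'s at once. A plausible workaround is to argue iteratively: process the branch vertices one by one, using the vast surplus of candidate short paths at each $v$ (afforded by taking $N$ huge compared to $k$) to absorb the internal vertices previously committed to other branch vertices. This would require a careful budgeting argument, very much in the spirit of the chain/gadget framework developed in Section~\ref{sec:mindegreeorientedcycles}, and is the step on which a clean proof will likely stand or fall.
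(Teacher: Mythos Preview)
This statement is a \emph{conjecture}, listed in the Open Problems section; the paper does not prove it and offers no argument to compare against. Your proposal is therefore an attack on an open problem, not a reconstruction of a known proof.

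You correctly isolate the crux and then leave it open: the global internal vertex-disjointness of the subdivision paths across all branch vertices. The random-$B$ with iterative absorption scheme is a natural first thought, but the budgeting does not close. With $|B| \approx p\,|V(D_0)|$ and each of the $k|B|$ required subdivision paths having expected length $\Theta(1/p)$ before first return to $B$, the total number of internal vertices you must commit is $\Theta(k\,|V(D_0)|)$, already exceeding the host for $k \ge 2$; so collisions are not a lower-order nuisance to be absorbed but the generic outcome. Shrinking $B$ does not obviously help either, since then the return-to-$B$ paths lengthen correspondingly. More fundamentally, the Thomassen obstruction you cite is exactly why ``vast surplus of candidate short paths'' does not convert into disjoint routings in digraphs: arbitrarily high out-degree and even high strong connectivity fail to give $2$-linkedness, so there is no off-the-shelf tool to thread $k|B|$ prescribed dipaths disjointly. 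The step you flag as ``where a clean proof will likely stand or fall'' is not a technicality but the whole content of the conjecture; nothing in the paper's gadget/chain machinery of Section~\ref{sec:mindegreeorientedcycles} supplies it either, since that framework routes a \emph{single} long alternating structure rather than a quadratic number of disjoint short dipaths.
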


An important step towards Conjecture \ref{con:forests} would be to show that attaching an out-leaf to any vertex of a $\delta^+$-maderian digraph yields still a $\delta^+$-maderian digraph.
\begin{conjecture}\label{conj:out_leaf}
If $F$ is a $\delta^+$-maderian digraph, $v_0 \in V(F)$ and $F^\ast$ is the digraph obtained from $F$ by adding a new vertex $v_1$ and the arc $(v_0,v_1)$, then $F^\ast$ is $\delta^+$-maderian as well.
\end{conjecture}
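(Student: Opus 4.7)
The plan is to set $K := \text{mader}_{\delta^+}(F)$, aim to establish $\text{mader}_{\delta^+}(F^*) \le K'$ for a suitable $K' = K'(F, K)$, and to proceed by a minimum-counterexample argument in the style of Proposition~\ref{prop:K_3-e}. Let $D$ with $\delta^+(D) \ge K'$ contain no subdivision of $F^*$ and minimize $|V(D)| + |A(D)|$. Routine arc-deletion and induced-subdigraph reductions, entirely analogous to Claims~1--2 in the proof of Proposition~\ref{prop:K_3-e}, yield $d^+(v) = K'$ for every $v \in V(D)$ and strong connectivity of $D$. Since $K' \ge K$, $D$ contains a subdivision $S$ of $F$; write $u_0 = u_0(S)$ for the branch vertex corresponding to $v_0$. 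A subdivision of $F^*$ inside $D$ is precisely $S$ together with a dipath from $u_0$ to some vertex of $V(D) \setminus V(S)$ that is internally vertex-disjoint from $V(S) \setminus \{u_0\}$, and since the first arc of any such dipath must be an out-arc of $u_0$ leaving $V(S)$, the non-existence of an $F^*$-subdivision is equivalent to the rigidity condition that \emph{every} subdivision $S$ of $F$ in $D$ satisfies $N^+(u_0(S)) \subseteq V(S)$.

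The heart of the proof would be to exploit this rigidity through chord-exchange surgery. Among all subdivisions of $F$ in $D$, I would pick one minimizing the total length $\sum_{e \in A(F)} |P_e|$ of the subdivision paths (equivalently, $|V(S)|$). For such an $S$, any chord out-neighbor $w \in N^+(u_0) \cap V(S)$ that is strictly further than one step along some subdivision path $P_{(v_0, v')}$ starting at $u_0$ would yield a shorter subdivision upon replacing $P_{(v_0, v')}[u_0, w]$ by the single arc $(u_0, w)$, contradicting minimality. Hence the $K' - d^+_F(v_0)$ ``extra'' out-neighbors of $u_0$ in $V(S)$ must be either non-$u_0$ branch vertices of $S$ (of which there are only $|V(F)| - 1$) or internal vertices of subdivision paths $P_{(v^-, v^+)}$ with $v^- \ne v_0$. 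Choosing $K'$ so that $K' - d^+_F(v_0) \ge |V(F)|$ forces at least one chord $(u_0, w)$ of the latter type; the hope is then to combine this chord with the now ``freed'' subpath $P_{(v^-,v^+)}[u_{v^-}, w]$, together with Menger's theorem (Theorem~\ref{setmenger}) applied inside $D - (V(S) \setminus \{u_0, u_{v^-}, u_{v^+}\})$, to reroute $S$ into a new subdivision $S'$ of $F$ in which either the image of $v_0$ has migrated to a vertex with an out-neighbor outside $V(S')$, or $|V(S')| < |V(S)|$, yielding the desired contradiction.

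The principal obstacle, and the reason this question is still open, is precisely this last rerouting step. In the undirected setting, Bollob\'as--Thomason-type arguments furnish subdivisions of $F$ on $O_F(1)$ vertices, so some neighbor of $u_0$ must lie outside $V(S)$ as soon as the minimum degree exceeds $|V(S)|$. By contrast, Thomassen's high-girth, high-min-out-degree construction shows that a subdivision of $F$ inside a digraph of minimum out-degree $K'$ can be forced to use arbitrarily long subdivision paths, so the rigidity $N^+(u_0) \subseteq V(S)$ is perfectly compatible with $K'$ being huge. A local chord exchange along a single $P_{(v^-,v^+)}$ does not obviously reduce $\sum_e |P_e|$: the dipath $(u_0, w) \circ P_{(v^-,v^+)}[w, u_{v^+}]$ corresponds to an arc $(v_0, v^+)$ of $F$ that might not even exist, so there is no canonical ``slot'' in $F$ for the rerouted path to occupy. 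A genuinely global argument appears to be required, and a plausible route is via Conjecture~\ref{conj:>=2-subdivision}: if one could always find, inside any $D$ of large enough minimum out-degree, a subdivision of $F$ in which every arc incident to $v_0$ is itself long-subdivided, the extra slack along those paths should allow the chord-exchange to be executed cleanly and would close the argument.
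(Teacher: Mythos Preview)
The statement you are attempting to prove is Conjecture~\ref{conj:out_leaf}, which the paper explicitly lists as an \emph{open problem}; there is no proof in the paper to compare your proposal against. You yourself acknowledge this (``the reason this question is still open''), so your write-up is really an outline of a strategy together with an honest diagnosis of why it stalls, rather than a proof. As such, the appropriate verdict is not ``correct'' or ``incorrect'' but ``incomplete by design.''

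Two substantive remarks. First, your reduction to the rigidity condition $N^+(u_0(S)) \subseteq V(S)$ for every $F$-subdivision $S$ is sound, and the chord-exchange idea on a vertex-minimal $S$ is a natural first move; you correctly identify why it does not close: a chord $(u_0,w)$ into the interior of some $P_{(v^-,v^+)}$ with $v^- \neq v_0$ does not correspond to any arc of $F$, so there is no canonical way to shorten $S$. Second, your proposed rescue via Conjecture~\ref{conj:>=2-subdivision} is not the route the paper singles out. The paper observes instead that Conjecture~\ref{conj:out_leaf} would follow from Conjecture~\ref{conj:in-dominating}: if one can peel off an in-dominating set $X$ while keeping $\delta^+(D-X) \ge \text{mader}_{\delta^+}(F)$, then any $F$-subdivision $S \subseteq D-X$ has $u_0(S)$ sending an arc into $X \subseteq V(D) \setminus V(S)$, which immediately yields the extra leaf. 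That implication is one clean line, whereas it is not clear to me how ``long subdivision paths incident to $v_0$'' alone would let you execute the chord-exchange---the problematic chords land on paths \emph{not} incident to $v_0$, and lengthening the paths at $v_0$ does not obviously help reroute those. If you want to pursue this conjecture further, the in-dominating-set formulation looks like the more promising target.
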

\noindent
Conjecture~\ref{conj:out_leaf} would follow directly from the following natural statement.
We call a set of vertices $X$ in a digraph $D$ \emph{in-dominating set} if every $y \in V(D) \setminus X$ has an out-neigbor in $X$.
\begin{conjecture}\label{conj:in-dominating}
There exists a function $f:\mathbb{N} \rightarrow \mathbb{N}$ such that the following holds for every $k \ge 1$. If $D$ is a digraph with $\delta^+(D) \ge f(k)$, then there exists an in-dominating set $X \subsetneq V(D)$ such that $\delta^+(D-X) \ge k$.
\end{conjecture}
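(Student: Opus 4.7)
Setting $Y := V(D) \setminus X$, the conjecture is equivalent to finding a non-empty proper subset $Y \subsetneq V(D)$ with $\delta^+(D[Y]) \ge k$ and such that every $y \in Y$ has at least one out-neighbour in $V(D) \setminus Y$. The ``escape'' condition is automatic whenever $|Y| \le \delta^+(D)$, since then each $y \in Y$ has more out-neighbours in $D$ than there are other vertices of $Y$. One might therefore hope simply to find an induced subdigraph of $D$ of order at most $f(k)$ with minimum out-degree at least $k$ and take $Y$ to be its vertex set. However, arbitrarily long directed cycles (the case $k=1$) and their higher-$k$ analogues show that such a bounded-size subdigraph need not exist, so $Y$ must in general be allowed to be large and the escape condition enforced directly.

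My plan is to attack the conjecture via the probabilistic method. Place each vertex of $V(D)$ into $X$ independently with probability $p = p(k) \in (0,1)$. For every vertex $v$ consider the two bad events
\[
  A_v = \bigl\{\, v \in Y \text{ and } |N^+(v) \cap Y| < k \,\bigr\},
  \qquad
  B_v = \bigl\{\, v \in Y \text{ and } N^+(v) \cap X = \emptyset \,\bigr\}.
\]
For $p$ suitably chosen (e.g.\ constant) and $f(k)$ sufficiently large relative to $k$, standard Chernoff-type inequalities show that each $\Pr[A_v]$ and $\Pr[B_v]$ is exponentially small in $f(k)$. Since each bad event is determined by the random choices on $\{v\} \cup N^+(v)$, two events are dependent only when the corresponding closed out-neighbourhoods intersect. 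The aim is to apply the Lov\'{a}sz Local Lemma and deduce that, with positive probability, no bad event occurs, giving the desired set $X$.

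The main obstacle is bounding the dependency degree: the number of events dependent with $A_v$ or $B_v$ is $O\bigl(\sum_{u \in N^+[v]} d^-(u)\bigr)$, and this quantity is not controlled by any function of $f(k)$ alone, since a few high-in-degree ``hubs'' can force many events to share variables. To circumvent this I would combine the random argument with two structural reductions. First, apply the theorem of Dellamonica, Koubek, Martin and R\"{o}dl~\cite{DKMR} to pass to an induced subdigraph $D' \subseteq D$ with $\delta^+(D')$ still large and directed girth at least $g(k)$; one readily checks that a valid in-dominating set in $D'$ extends to a valid one in $D$ by additionally placing $V(D)\setminus V(D')$ into $X$. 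Second, before randomising, put deterministically into $X$ every vertex of $D'$ whose in-degree exceeds some threshold $h(k)$: this simultaneously in-dominates all of their many in-neighbours and bounds the in-degree of the remaining digraph (so long as one first passes to an out-degree-regular spanning subdigraph, so that the escape condition is not disturbed). After these preparations, the LLL is applied to the restricted random colouring on the remaining low-in-degree vertices, where the bad events now have polynomially many neighbours in the dependency graph, and the calculation should close with $f(k)=\mathrm{poly}(k)$. I expect the most delicate point to be executing the second reduction without breaking $\delta^+(D[Y]) \ge k$; a backup is a deterministic iterative alteration scheme that moves vertices from $Y$ to $X$ one at a time to repair in-domination violations while preserving the min-out-degree condition, via a potential function argument that a ``movable'' vertex (i.e., one whose in-neighbours in $Y$ all have at least $k+1$ out-neighbours in $Y$) always exists.
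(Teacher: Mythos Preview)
The statement you are attempting to prove is Conjecture~\ref{conj:in-dominating}, which appears in the paper's Open Problems section. The paper does \emph{not} prove it; it is explicitly posed as an open question (offered as a route towards Conjecture~\ref{conj:out_leaf}). There is therefore no proof in the paper to compare your proposal against.

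As for the proposal itself: your reformulation in terms of $Y$ is correct, and the Local Lemma approach is a natural first thought, but the sketch does not overcome the obstacle you yourself identify. Neither of the two structural reductions closes the gap. Passing to a subdigraph of large directed girth via~\cite{DKMR} is a valid reduction (your extension argument for $X$ is fine), but large girth does nothing to bound in-degrees, so it does not by itself reduce the dependency degree of the LLL instance. The second reduction, placing all high-in-degree vertices into $X$ deterministically, is the real problem: a vertex $y$ may have \emph{all} of its out-neighbours among the high-in-degree vertices, in which case $y$ is left with out-degree $0$ in $D[Y]$. Making $D$ out-regular beforehand only bounds the \emph{average} number of high-in-degree out-neighbours per vertex, not the maximum, so this bad configuration is not excluded. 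Your backup ``iterative alteration via a potential function'' is stated only as a hope, with no indication of what the potential is or why a movable vertex must exist; without that, the argument is circular (it assumes essentially what the conjecture asserts about the structure of $Y$).

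In short, you are attacking an open problem, and the proposal as written is a plausible outline of a first attempt rather than a proof; the decisive step---controlling vertices all of whose out-neighbours are hubs---remains unaddressed.
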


Another interesting direction is to characterize the undirected graphs $F$ for which 
%$\bivec{F}$ (i.e., the biorientation of $F$) is $\delta^+$-maderian. 
the biorientation $\bivec{F}$ of $F$ is $\delta^+$-maderian. 
If $\bivec{F}$ is $\delta^+$-maderian, then $F$ must be a forest, since every bioriented cycle has arc-connectivity two and hence is not $\delta^+$-maderian (see the necessary properties of $\delta^+$-maderian digraphs mentioned in the introduction).
Furthermore, it is known that $\bivec{S_3}$ is not $\delta^+$-maderian \cite{thom85}. Thus, if $\bivec{F}$ is $\delta^+$-maderian then $F$ must be a path-forest.
Thomassen's result~\cite{thom83} shows that a biorientation of any matching is $\delta^+$-maderian. By Theorem \ref{thm:K3-e}, $\bivec{S_2}= \bivec{P_3}$ is $\delta^+$-maderian (where $P_{\ell}$ denotes the path on $\ell$ vertices).
The first open case is that of $\bivec{P_4}$.
\begin{problem}
	Is $\bivec{P_4}$ $\delta^+$-maderian?
\end{problem}

Finally, several open problems arise from the questions considered in Section~\ref{sec:arcconn}. Given that $\bivec{K}_4$ and $\bivec{S_4}$ are not $\kappa'$-maderian (see Propositions \ref{prop:arc_connectivity}-\ref{prop:arc_connectivity2}), it is natural to ask
%the following question.
whether $\bivec{K}_3$ and \nolinebreak $\bivec{S_3}$ \nolinebreak are. 
\begin{problem}
Is $\bivec{K}_3$ $\kappa'$-maderian? Is $\bivec{S_3}$ $\kappa'$-maderian?
\end{problem}
As mentioned in the introduction, every subdivision of $\bivec{K_3}$ contains an even dicycle, and one cannot force an even dicycle by means of minimum out-degree \cite{thom85}. Thus, even dicycles can be thought of as an obstacle to forcing subdivisions of $\bivec{K_3}$. Interestingly, this obstacle disappears when considering arc-connectivity (rather than out-degree), as a theorem of Thomassen~\cite{thom92} shows that every digraph $D$ with $\kappa'(D) \ge 3$ contains an even dicycle. This can be thought of as a hint that $\bivec{K}_3$ could in fact be $\kappa'$-maderian.

A critical first step towards the resolution of Problem~\ref{prob:connectivity} for vertex-connectivity is the following.
\begin{problem}
Is there a constant $K \in \mathbb{N}$ such that every $K$-strongly-vertex connected digraph contains two vertices $x \neq y$ and four pairwise internally vertex-disjoint dipaths, two from $x$ to $y$ and two from $y$ to $x$? 
\end{problem}

{\bf Acknowledgement} The research on this project was initiated during a joint research workshop of Tel Aviv University and the Freie Universit\"at Berlin on Ramsey Theory, held in Tel Aviv in March 2020, and partially supported by GIF grant G-1347-304.6/2016. We would like to thank the German-Israeli Foundation (GIF) and both institutions for their support.

\end{document}